\title{Monotone convex order for the McKean-Vlasov processes}
\newcommand{\footremember}[2]{
   \footnote{#2}
    \newcounter{#1}
    \setcounter{#1}{\value{footnote}}
}
\author{%
 Yating Liu\footremember{a}{\small CEREMADE, CNRS, UMR 7534, Université Paris-Dauphine, PSL University, 75016 Paris, France, \texttt{liu@ceremade.dauphine.fr}.}%
  \and Gilles Pag{\`e}s\footremember{b}{\small Sorbonne Université, CNRS, Laboratoire de Probabilités, Statistique et Modélisation (LPSM),
75252 Paris, France, \texttt{gilles.pages@sorbonne-universite.fr}.}%
  }
\begin{document}
\maketitle

\numberwithin{equation}{section}
\newtheorem{thm}{Theorem}[section]
\newtheorem{lem}{Lemma}[section]
\newtheorem{prop}{Proposition}[section]
\newtheorem{cor}{Corollary}[section]
\newtheorem{defn}{Definition}[section]
\theoremstyle{remark}
\newtheorem{rem}{Remark}[section]

\newcommand{\comp}[1]{{#1}^{\mathsf{c}}} 

\newcommand{\widesim}[2][1.5]{
  \mathrel{\overset{#2}{\scalebox{#1}[1]{$\sim$}}}
}

\newcommand{\RRD}{\mathbb{R}^{d}}
\newcommand{\vertiii}[1]{{\left\vert\kern-0.25ex\left\vert\kern-0.25ex\left\vert #1 
    \right\vert\kern-0.25ex\right\vert\kern-0.25ex\right\vert}}
\newcommand{\vertii}[1]{\left\Vert #1\right\Vert}
\newcommand{\Proj}{\mathrm{Proj}}

\newcommand{\tabincell}[2]{\begin{tabular}{@{}#1@{}}#2\end{tabular}}
\newcommand*\circled[1]{\tikz[baseline=(char.base)]{
            \node[shape=circle,draw,inner sep=2pt] (char) {#1};}}

\newcommand\independent{\protect\mathpalette{\protect\independenT}{\perp}}
\def\independenT#1#2{\mathrel{\rlap{$#1#2$}\mkern2mu{#1#2}}}

\newcommand{\PPC}{\mathcal{P}_{p}\big(\mathcal{C}([0, T], \mathbb{R}^{d})\big)}
\newcommand{\CPP}{\mathcal{C}\big([0, T], \mathcal{P}_{p}(\mathbb{R}^{d})\big)}
\newcommand{\CRD}{\mathcal{C}([0, T], \mathbb{R}^{d})}
\newcommand{\PPRD}{\mathcal{P}_{p}(\mathbb{R}^{d})}

\newcommand{\PP}{\mathbb{P}}
\newcommand{\RD}{\mathbb{R}^{d}}
\newcommand{\RR}{\mathbb{R}}
\newcommand{\PRD}{\mathcal{P}(\mathbb{R}^{d})}
\newcommand{\MDQ}{\mathbb{M}_{d \times q}}
\newcommand{\EE}{\mathbb{E}\,}
\newcommand{\conright}{\preceq_{\,cv}}
\newcommand{\conleft}{\succeq{\,cv}}

\newcommand{\mconright}{\preceq_{\,\text{mcv}}}
\newcommand{\mconleft}{\succeq_{\,\text{mcv}}}
\newcommand{\PRR}{\mathcal{P}(\RR)}
\newcommand{\PPRR}{\mathcal{P}_{p}(\RR)}

\newcommand{\Lip}{\text{Lip}}

\newcommand{\EMH}{\mathcal{E}_{m}^{h}}

\vspace{-0.8cm}
\begin{abstract}
This paper is a continuation of our previous paper~\cite{liu2020functional}.
In this paper, we establish the \textit{monotone} convex order (see further \eqref{defmcv1}) between two $\RR$-valued McKean-Vlasov processes $X=(X_t)_{t\in [0, T]}$ and $Y=(Y_t)_{t\in [0, T]}$ defined on a filtered probability space $(\Omega, \mathcal{F}, (\mathcal{F}_{t})_{t\geq0}, \mathbb{P})$ by 
\[\begin{cases}
dX_{t}=b(t, X_{t}, \mu_{t})dt+\sigma(t, X_{t}, \mu_{t})dB_{t}, \quad X_{0}\in  L^{p}(\mathbb{P})\; \text{with}\; p\geq 2,\\
dY_{t}\,=\beta(t, Y_{t}, \,\nu_{t})dt+\theta(t, \,Y_{t}, \,\nu_{t})\,dB_{t}, \,\quad Y_{0}\in L^{p}(\mathbb{P}), \\
\text{where} \:\:\forall\, t\in [0, T],\: \mu_{t}=\mathbb{P}\circ X_{t}^{-1}, \:\nu_{t}=\mathbb{P}\circ Y_{t}^{-1}. 
\end{cases}\]
If we make the convexity and monotony assumption (only) on $b$ and $|\sigma|$ and if $b\leq \beta$ and $|\sigma|\leq |\theta|$, then the monotone convex order for the initial random variable $X_0\mconright Y_0$ can be propagated to the whole path of processes $X$ and $Y$. That is, if we consider a non-decreasing convex functional $F$ defined on the path space with polynomial growth, we have $\EE F(X)\leq \EE F(Y)$; for a non-decreasing convex functional $G$ defined on the product space involving the path space and its marginal distribution space, we have $\EE G(X, (\mu_{t})_{t\in [0, T]})\leq \EE G(Y, (\nu_{t})_{t\in [0, T]})$ under appropriate conditions. The symmetric setting is also valid, that is, if $Y_0\mconright X_0$ and $|\theta|\leq |\sigma|$, then $\EE F(Y)\leq  \EE F(X)$ and $\EE G(Y, (\nu_{t})_{t\in [0, T]})\leq \EE G(X, (\mu_{t})_{t\in [0, T]})$.  The proof is based on several forward and backward dynamic programming principle and the convergence of the \textit{truncated} Euler scheme of the McKean-Vlasov equation. 
\end{abstract}

\emph{Keywords:} Convex order, Monotone convex order, McKean-Vlasov process, Truncated Euler scheme.

\section{Introduction} 

Let $U, V: (\Omega, \mathcal{F}, \mathbb{P})\rightarrow \big(\RR, Bor(\RR)\big)$ be two integrable random variables. We call $U$ is dominated by $V$ for the \textit{monotone convex order} - denoted by $U\mconright V$ - if for every non-decreasing convex function $\varphi: \RR\rightarrow\RR$,
\begin{equation}\label{defmcv1}
\EE \varphi(U)\leq \varphi(V).
\end{equation}
Let $\mathcal{P}(\RR)$ denote the set of all probability distributions on $(\RR, Bor(\RR)\big)$ and for every $p\in[1, +\infty)$, let 
\[\mathcal{P}_{p}(\RR)\coloneqq \Big\{\mu\in\mathcal{P}(\RR) \text{ s.t. }\int_{\RR}\left|\xi\right|^{p}\mu(d\xi)<+\infty\Big\}.\]
We can naturally generalize the definition of the monotone convex order on $\mathcal{P}_{1}(\RR)$: for any $\mu, \nu\in\mathcal{P}_{1}(\RR)$, we denote $\mu\mconright\nu$ if for every non-decreasing convex function $\varphi$, 
\begin{equation}\label{defmcv2}
\int_{\RR}\varphi(\xi)\mu(d\xi)\leq \int_{\RR}\varphi(\xi)\nu(d\xi).
\end{equation}
For any random variable $X$, if we denote its probability distribution by $\mathbb{P}_{X}=\mathbb{P}\circ X^{-1}$, it is easy to see that $U\mconright V$ implies $\mathbb{P}_{U}\mconright \mathbb{P}_{V}$ and vice versa. As for what is established in~\cite[Lemma A.1]{alfonsi:hal-01589581} for  (regular) convex order, \textcolor{black}{the following lemma, whose proof  is postponed in Appendix A, shows that monotone convex order can be characterized  by establishing~(\ref{defmcv1}) and~(\ref{defmcv2}) for  a smaller class of monotonic convex functions}. 
\begin{lem}\label{onlylineargrowth} For every $\mu$, $\nu\!\in {\cal P}_1(\RR)$, we have $\mu \mconright \nu $ if and only if, for every convex and non-decreasing function $f:\RR\to \RR$ with linear growth, $\int_{\RR} f d\mu \le \int_{\RR} f d\nu$.
\end{lem}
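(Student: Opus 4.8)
One direction is trivial: if $\mu \mconright \nu$, then in particular $\int f\,d\mu \le \int f\,d\nu$ for every convex non-decreasing $f$ with linear growth, since such $f$ are a subclass of all convex non-decreasing functions. So the content is the converse: assuming $\int f\,d\mu \le \int f\,d\nu$ for all convex non-decreasing $f$ of linear growth, deduce $\int \varphi\,d\mu \le \int \varphi\,d\nu$ for an \emph{arbitrary} convex non-decreasing $\varphi:\RR\to\RR$. The only obstruction is that such a $\varphi$ may grow superlinearly (e.g. $\varphi(x)=x^2\vee 0$-type behaviour), so both sides could a priori be $+\infty$; the point is to show the inequality still holds, i.e. that $\int\varphi\,d\nu=+\infty$ whenever $\int\varphi\,d\mu=+\infty$, and otherwise compare finite quantities.

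The plan is to approximate $\varphi$ from below by a non-decreasing sequence of convex non-decreasing functions of linear growth and pass to the limit by monotone convergence. Concretely, I would fix $\varphi$ convex and non-decreasing, pick a point $x_0$ and a subgradient $m_0 \ge 0$ of $\varphi$ at $x_0$ (non-negativity of the slope comes from monotonicity), and for each $n \ge 1$ define
\[
\varphi_n(x) \;=\; \sup\Big\{\,\ell(x)\;:\;\ell \text{ affine},\ \ell \le \varphi,\ \ell \text{ has slope in }[0,n]\,\Big\},
\]
or equivalently truncate the right-derivative of $\varphi$ at level $n$: let $\varphi_n$ be the convex non-decreasing function with $\varphi_n(x_0)=\varphi(x_0)$ and right-derivative $\varphi_n'(x) = \min(\varphi'_+(x),\,n)$ for $x \ge x_0$ and $\varphi_n'(x)=\min(\varphi'_+(x_0\wedge x)^+, n)$... — more cleanly, set $\varphi_n(x) = \varphi(x)$ for $x$ in the region where the right-derivative is $\le n$ and extend affinely with slope $n$ beyond. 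Each $\varphi_n$ is convex, non-decreasing, has slope at most $n$ hence linear growth, satisfies $\varphi_n \le \varphi$, and $\varphi_n \uparrow \varphi$ pointwise as $n \to \infty$. Moreover $\varphi_n$ is bounded below (by the affine minorant $\ell_0(x)=\varphi(x_0)+m_0(x-x_0)$, which itself has linear growth), so $\int \varphi_n\,d\mu$ and $\int \varphi_n\,d\nu$ are well defined in $(-\infty,+\infty]$; in fact finite since $\varphi_n$ has linear growth and $\mu,\nu\in\mathcal P_1(\RR)$.

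Then by hypothesis $\int \varphi_n\,d\mu \le \int \varphi_n\,d\nu$ for every $n$. Since $\varphi_n \ge \ell_0$ with $\ell_0$ $\mu$- and $\nu$-integrable, and $\varphi_n \uparrow \varphi$, the monotone convergence theorem (applied to $\varphi_n - \ell_0 \ge 0$) gives $\int \varphi_n\,d\mu \to \int \varphi\,d\mu$ and likewise for $\nu$, both limits in $(-\infty,+\infty]$. Passing to the limit in the inequality yields $\int \varphi\,d\mu \le \int \varphi\,d\nu$, which is exactly \eqref{defmcv2}. The main technical point to get right is the explicit construction of the truncated minorants $\varphi_n$ and the verification that they are genuinely non-decreasing, convex, of linear growth, dominated by $\varphi$, and increase to $\varphi$ — all of which follow from elementary properties of one-dimensional convex functions and their right-derivatives; there is no real obstacle beyond this bookkeeping.
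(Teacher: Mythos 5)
Your proposal is correct and follows essentially the same route as the paper: the paper also approximates a general convex non-decreasing $f$ from below by replacing it beyond a threshold $K$ with its tangent line (a convex, non-decreasing, Lipschitz, hence linear-growth minorant) and passes to the limit by monotone convergence, handling integrability by splitting into $f^\pm$ rather than subtracting an affine minorant as you do. The only point to tidy in your write-up is that the slope-truncated minorant $\varphi_n$ is well defined (non-degenerate) only for $n$ at least the chosen subgradient $m_0$, which is harmless since only large $n$ matters.
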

 
\textcolor{black}{Convex order of distribution has received much attention for a long time theoretically motivated by the study \textcolor{black}{of} martingales since their marginal distributions are ordered  in a convex order and, for more applied matters, by risk management, actuarial sciences and more recently finance. The equivalence between $U\mconright V$ and the existence of a one period martingale  $(M_i)_{i=1,2}$ such that $M_1\sim U$ and $M_2\sim V$ is due to Kellerer  in~\cite{KEL}, see also \cite{Strassen} for a version introducing a martingale transition kernel. These results, whose proofs  are not constructive, were brought back to light  at the end of the 1990's in order to analyze the sensitivity of possibly path-dependent derivative products to volatility when the dynamics of the underlying asset was a geometric Brownian motion and more generally diffusion process when dealing with local volatility models, see~\cite{NEKJS1998} for the robustness of Black-Scholes model, \cite{CarrEW2008, BergenthumR2008} for a first approach to path dependent options among others. Some  of these works \textcolor{black}{were} focused on the exhibition of explicit martingales, see~\cite{Yor} for Asian options and then~\cite{HirschPRY2011} for a  systematic exploration of such martingale representation of convex order. Another direction, still motivated by financial application, was to establish (path-dependent) convex order  for discretization schemes (the Euler scheme in  practice) combined with appropriate (functional)  limit  theorems toward the target diffusion. Such a strategy, systematically developed e.g. in~\cite{pages2016convex} avoids to introduce arbitrage while pricing and hedging derivative products written on convex payoffs. This last approach was extended to McKean-Vlasov equation in~\cite{liu2020functional}.}

\textcolor{black}{As mentioned above, convex order is closely connected to martingality and is subsequently associated to martingale diffusions or to diffusions sharing the same affine drift which can be reduced to martingale up to  an appropriate scaling. When dealing with more general (scalar) diffusions with drifts, Hajek pointed out and proved in~\cite{Hajek1985} that under an additional convexity assumptions on this drift, their marginals can be ordered in monotone convex order as defined above.}

\textcolor{black}{This paper aims at extending this seminal  old result by proving  functional monotone convex order for McKean-Vlasov diffusions whose  drift $b(t,x,\mu)$   are convex in  $x$ (space) and non-decreasing in $\mu$ (distribution). It can be seen as}  the continuation of our paper~\cite{liu2020functional} in which we discuss the (regular) functional convex order for two McKean-Vlasov processes having \textcolor{black}{the same}  affine drift. In this paper, within the framework of  the \textit{monotone} convex order, we can establish such convex order results for two one-dimensional McKean-Vlasov processes with general drift coefficient. 
Let $(\Omega, \mathcal{F}, (\mathcal{F})_{t\geq 0}, \mathbb{P})$ be a filtered probability space satisfying the usual conditions.
Let $X=(X_t)_{t\in[0, T]}$ and $Y=(Y_t)_{t\in[0, T]}$ be two McKean-Vlasov processes, respective solutions to 
\begin{align}
\label{defx}
&dX_{t}=b(t, X_{t}, \mu_{t})dt+\sigma(t, X_{t}, \mu_{t})dB_{t}, \quad X_{0}\in L^{p}(\mathbb{P}),\\
\label{defy}
&dY_{t}\,=\beta(t, Y_{t}, \,\nu_{t})dt+\theta(t, \,Y_{t}, \,\nu_{t})\,dB_{t},  \quad Y_{0}\in L^{p}(\mathbb{P}),
\end{align}
where $p\geq 2$, for every $t\in[0, T]$, $\mu_t=\mathbb{P}\circ X_{t}^{-1}$ and $\nu_t=\mathbb{P}\circ Y_{t}^{-1}$, the coefficients $b, \,\beta,\, \sigma, \,\theta$ are $\RR$-valued functions defined on $\big([0, T], \RR, \mathcal{P}_{1}(\RR)\big)$  and $(B_{t})_{t\in[0, T]}$ denotes an $(\mathcal{F}_{t})$-standard Brownian motion valued in $\mathbb{R}$, independent to $(X_0, \,Y_0)$.

We introduce the definition of the Wasserstein distance $\mathcal{W}_{p}$ on $\PPRR$ as follows:  for every $\mu, \nu\in\PPRR$, $p\geq 1$, 
\vspace{-0.4cm}
\begin{flalign}\label{defwas}
\mathcal{W}_{p}(\mu,\nu)\coloneqq&\Big{(}\inf_{\pi\in\Pi(\mu,\nu)}\int_{\mathbb{R}^2}d(x,y)^{p}\pi(dx,dy)\Big{)}^{\frac{1}{p}}&\nonumber\\
=&\inf\Big{\{}\big{[}\mathbb{E}\,\left|X-Y\right|^{p}\big{]}^{\frac{1}{p}},\,  X,Y:(\Omega,\mathcal{A},\mathbb{P})\rightarrow( \mathbb{R} ,Bor( \mathbb{R}))  \;\text{with} \;\mathbb{P}_{X}=\mu,\; \mathbb{P}_{Y}=\nu\,\Big{\}},&
\end{flalign}
where in the first ligne of~(\ref{defwas}), $\Pi(\mu,\nu)$ denotes the set of all probability measures on $\big( \mathbb{R}^2, Bor( \mathbb{R}^{2})\big)$ with marginals $\mu$ and $\nu$. Let $\delta_{0}$ denote the Dirac mass on 0. 

Throughout this paper, we make the following assumptions:

\noindent\textbf{Assumption (I)}
There exists {\color{black}$p\in[2, \infty)$} such that $\vertii{X_0}_p\vee\vertii{Y_0}_p<+\infty$. The functions $b, \beta, \sigma, \theta$ are $\gamma$-H\"older continuous in $t$ and Lipschitz continuous in $x$ and in $\mu$ in the following sense \footnote{For convenience, we explain these assumptions only for $b$ but these inequalities are assumed for all the four coefficient functions $b, \beta, \sigma, \theta$.}:  for every $s, t\in [0, T]$ with $s\leq t$, 
there exists $C>0$ such that 
\begin{align}\label{assumpholder}
\forall \, x\in\mathbb{R}, \,\forall\mu\in\PPRR, \qquad\left|b(t, x, \mu)-b(s, x, \mu)\right|\leq C\big(1+\left|x\right|+\mathcal{W}_{p}(\mu, \delta_{0})\big)(t-s)^{\gamma}, &
\end{align}
and for every $t\in[0, T]$, there exists $L>0$ such that 
\begin{align}\label{assumplip}
\forall \, x,y \in\mathbb{R},\,\forall \, \mu, \nu\in\mathcal{P}_{p}(\mathbb{R}),\qquad\left|b(t, x, \mu) - b(t, y, \nu)\right|\leq L\big[\left|x-y\right|+\mathcal{W}_{p}(\mu, \nu)\big].
\end{align}

\noindent  \textbf{Assumption (II)}
\vspace{-0.5cm}
\begin{enumerate}[(1)]
\item For every fixed $t\in[0, T]$ and \textcolor{black}{every $\mu\in\mathcal{P}_{1}(\RR)$,  $b(t, \cdot, \mu)$ and $\left|\sigma(t, \cdot, \mu)\right|$ are convex functions (in $x$) on $\RR$}.
\item For every fixed $(t, x)\in[0, T]\times \RR$, the functions $b(t, x, \cdot)$ and $\sigma(t, x, \cdot)$ are non-decreasing in $\mu$ with respect to the monotone convex order in the sense that 
\[\mu, \nu\in\mathcal{P}_{1}(\RR), \:\mu\mconright\nu \quad\Longrightarrow\quad b(t, x, \mu)\leq b(t,x, \nu),\quad {\color{black}\left|\sigma(t, x, \mu)\right|\leq \left|\sigma(t,x, \nu)\right|}.\]
\item For every $(t, x,\mu)\in [0, T]\times \RR\times \mathcal{P}_{1}(\RR)$, we have 
\[b(t, x, \mu)\leq \beta(t, x, \mu)\quad \text{and}\quad{\color{black}\left|\sigma(t, x, \mu)\right|\leq \left|\theta(t, x, \mu)\right|}.\]
\item $X_0\mconright Y_0$.
\end{enumerate}


{\color{black}
\begin{rem}   Note  that in \textcolor{black}{Assumptions (II), the conditions are made on $\left|\sigma\right|$ and $\left|\theta\right|$ rather than on $\sigma$ and $\theta$ since, e.g. for $\sigma$,}  the   process $(X_{t}')_{t\in [0, T]}$ defined by
\[\begin{cases}
\displaystyle \;X'_{t}= X_{0}+\int_{0}^{t}b(s, X'_{s}, \mu'_{s})ds+\int_{0}^{t}\left|\sigma(s, X'_{s}, \mu'_{s})\right|dB_{s}, \\
 \;\forall\, t\in [0, T],\;\mu'_{t}=\mathbb{P}\circ X_{t}^{'-1}
\end{cases}\]
has the same distribution as $(X_{t})_{t\in [0, T]}$ defined by (\ref{defx}). 
\end{rem}

\noindent {\bf Example: the Vlasov case.} As for the Vlasov case, that is, if there exist four functions $\boldsymbol{b}, \boldsymbol{\beta}, \boldsymbol{\sigma}, \boldsymbol{\theta}: [0, T]\times \RR \times \RR\rightarrow \RR$ such that for every $(t, x, \mu)\in [0, T]\times \RR\times \mathcal{P}_{1}(\RR)$
\begin{align}
&b(t, x, \mu)=\int_{\RR}\boldsymbol{b} (t, x, y)\mu(dy),  &\sigma(t, x, \mu)=\int_{\RR}\boldsymbol{\sigma} (t, x, y)\mu(dy),\nonumber\\
&\beta(t, x, \mu)=\int_{\RR}\boldsymbol{\beta} (t, x, y)\mu(dy), & \theta(t, x, \mu)=\int_{\RR}\boldsymbol{\theta} (t, x, y)\mu(dy),\nonumber
\end{align}
\textcolor{black}{we have the following natural sufficient conditions for Assumption (I)}
\vspace{-0.3cm}
\begin{enumerate}[$(1)_{v}$]
\item  $\boldsymbol{b}, \boldsymbol{\beta}, \boldsymbol{\sigma}, \boldsymbol{\theta}$ are $\gamma$-H\"older continuous in $t$ and Lipschitz continuous in $x$ and in $y$. 
\end{enumerate}
 \vspace{-0.3cm}
\noindent and for Assumption (II)-(1), (2), (3) 
 \vspace{-0.3cm}
\begin{enumerate}[$(1)_{v}$]
\item  $\boldsymbol{b}$, $\boldsymbol{\sigma}$ are convex in $x$, 
\item  $\boldsymbol{b}$, $\boldsymbol{\sigma}$ are convex and non-decreasing in $y$, 
\item $\boldsymbol{b}\leq \boldsymbol{\beta}$ and 
$|\boldsymbol{\sigma}|\leq \boldsymbol{\theta}$.
\end{enumerate}

Let $\big(\mathcal{C}([0, T], \RR),\; \vertii{\cdot}_{\sup}\big)$ denote the set of all continuous functions defined on $[0, T]$ and valued in $\RR$ equipped with the supremum norm $\vertii{\alpha}_{\sup}\coloneqq \sup_{t\in[0, T]}\left|\alpha_{t}\right|$.
In the literature on the McKean-Vlasov equation (see e.g.~\cite[Section 5.1]{liu:tel-02396797},~\cite[Theorem 3.3]{lacker2018mean}), Assumption (I) is a classical assumption for  existence and strong uniqueness of the solutions $X, Y: (\Omega, \mathcal{F},\mathbb{P})\rightarrow \big(\mathcal{C}([0, T], \RR), \vertii{\cdot}_{\sup}\big)$ of~(\ref{defx}) and~(\ref{defy}).  Moreover, let 
\begin{align}\label{margspace}
\mathcal{C}\big([0, T], \mathcal{P}_{p}(\RR)\big)\coloneqq \Big\{(\mu_{t})_{t\in [0, T]} &\text{ such that the mapping $t\mapsto \mu_{t}$} \nonumber\\& \quad \text{  is continuous w.r.t. the Wasserstein distance $\mathcal{W}_{p}$}\Big\}
\end{align}
denote the marginal distribution space, equipped with the distance 
\begin{equation}\label{disdc}
d_{p}\Big((\mu_{t})_{t\in[0, T]}, \,(\nu_{t})_{t\in [0, T]}\Big)\coloneqq \sup_{t\in[0, T]}\mathcal{W}_{p}(\mu_{t}, \nu_{t}).
\end{equation}
It follows from~\cite[Lemma 3.2]{liu2020functional} that the marginal distributions $(\mu_{t})_{t\in [0, T]}$ and $(\nu_{t})_{t\in [0, T]}$  of the solution processes $X$ and $Y$ of~(\ref{defx}) and~(\ref{defy}) lie in $\big(\mathcal{C}\big([0, T], \mathcal{P}_{p}(\RR)\big) , d_{p}\big)$. 
Moreover, we define a pointwise partial order $\preceq$ on $\mathcal{C}([0, T], \RR)$ as follows, 
\begin{equation}\label{partialorder1} 
\forall \,\alpha=(\alpha_t)_{t\in[0, T]},\; \beta=(\beta_{t})_{t\in [0, T]}\in \mathcal{C}\big([0, T], \RR\big), \: \text{we denote by }  \alpha\preceq\beta \,\text{ if }\, \forall\, t\in[0, T],\, \alpha_t\leq \beta_t. 
\end{equation}

The main result of this paper is the following theorem.
\begin{thm}\label{main}
Let $X\coloneqq (X_t)_{t\in[0, T]}, \:Y\coloneqq (Y_t)_{t\in[0, T]}$ respectively denote the unique solution of the McKean-Vlasov equations~(\ref{defx}),~(\ref{defy}) and let $(\mu_t)_{t\in [0, T]}$, $(\nu_t)_{t\in [0, T]}$ respectively denote the marginale distributions of $(X_t)_{t\in[0, T]}$, $(Y_t)_{t\in[0, T]}$. Under Assumption (I) and (II),
\begin{enumerate}[$(a)$]
\item For any functional $F: \big(\mathcal{C}([0, T], \RR), \vertii{\,\cdot\,}_{\sup}\big)\rightarrow \RR$ satisfying the following conditions 

\begin{enumerate}[$(i)$]
\item $F$ is convex and has an $r$-polynomial growth, $1\leq r < p$ in the sense that there exists a constant $C>0$ such that 
\[\forall \alpha\in\mathcal{C}([0, T], \RR), \quad \left|F(\alpha)\right|\leq C\big(1+ \sup_{t\in[0, T]}\left|\alpha_{t}\right|\big),\]
\item $F$ is non-decreasing with respect to the partial order~(\ref{partialorder1}),
\end{enumerate}
one has 
\begin{equation}\label{result1}
\EE F(X)\leq \EE F(Y).
\end{equation}
\item For any functional 
\[G:  \big(\alpha, (\eta_{t})_{t\in[0, T]}\big)\in\mathcal{C}([0, T], \RR)\times\mathcal{C}\big([0, T], \mathcal{P}_{1}(\RR)\big) \longmapsto G\big(\alpha, (\eta_{t})_{t\in[0, T]}\big)\in \RR\] satisfying the following conditions:
\begin{enumerate}[$(i)$]
\item $G$ is convex and non-decreasing with respect to the partial order~(\ref{partialorder1}) in $\alpha$, 
\item $G$ has an $r$-polynomial growth, $1\leq r< p$, in the sense that 
\begin{flalign}\label{rpolygrowth}
&\exists \,C\in\mathbb{R}_{+}\text{ such that }\forall \, \big(\alpha, (\eta_{t})_{t\in[0, T]}\big)\in\mathcal{C}\big([0, T], \RR\big)\times \mathcal{C}\big([0, T], \mathcal{P}_{p}(\RR)\big),& \nonumber\\
&\hspace{3.5cm}G\big(\alpha, (\eta_{t})_{t\in[0, T]}\big)\leq C\big[1+\vertii{\alpha}_{\sup}^{r}+\sup_{t\in[0, T]}\mathcal{W}_{p}^{\,r}(\eta_{t}, \delta_{0})\big],&
\end{flalign}

\vspace{-0.2cm}
\item $G$ is continuous in $(\eta_{t})_{t\in[0, T]}$ with respect to the distance $d_{r},\, 1\leq r<p$ defined in~(\ref{disdc}) and non-decreasing in $(\eta_{t})_{t\in[0, T]}$ with respect to the monotone convex order in the sense that 
\begin{flalign}
&\forall \alpha\in \mathcal{C}\big([0, T], \RR\big),\: \forall \, (\eta_{t})_{t\in[0, T]}, (\tilde{\eta}_{t})_{t\in[0, T]}\in \mathcal{C}\big([0, T], \mathcal{P}_{r}(\RR)\big) \text{ s.t. } \forall \, t \!\in[0, T],\,\eta_{t}\mconright\tilde{\eta}_{t}, &\nonumber\\
&\hspace{3.5cm} \quad \;G\big(\alpha,  (\eta_{t})_{t\in[0, T]}\big)\leq G\big(\alpha,  (\tilde{\eta}_{t})_{t\in[0, T]}\big),&\nonumber
\end{flalign}
\end{enumerate}

\vspace{-0.6cm}
\noindent one has 
\begin{equation}\label{result2}
\EE G\big(X, (\mu_{t})_{t\in[0, T]}\big)\leq \EE G\big(Y, (\nu_{t})_{t\in[0, T]}\big). 
\end{equation}

\end{enumerate}
\end{thm}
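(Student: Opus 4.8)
The plan is to transfer the comparison to a time discretisation, prove it there by an induction over the time steps that combines a forward and a backward dynamic programming principle, and then pass to the limit. Fix a uniform mesh $t_k=kT/n$, step $h=T/n$, and a truncation level $\Lambda_0\in(0,1/L)$ where $L$ is the common Lipschitz-in-$x$ constant of $b,\beta,\sigma,\theta$ from Assumption~(I). I would work with the \emph{truncated} Euler schemes $\bar X^{(n)},\bar Y^{(n)}$ of~\eqref{defx}--\eqref{defy}: $\bar X^{(n)}_0=X_0$ and $\bar X^{(n)}_{t_{k+1}}=\bar X^{(n)}_{t_k}+b(t_k,\bar X^{(n)}_{t_k},\bar\mu^{(n)}_{t_k})h+\sigma(t_k,\bar X^{(n)}_{t_k},\bar\mu^{(n)}_{t_k})\,\Delta B^{(n)}_{k+1}$ with $\bar\mu^{(n)}_{t_k}=\PP\circ(\bar X^{(n)}_{t_k})^{-1}$ and $\Delta B^{(n)}_{k+1}=(B_{t_{k+1}}-B_{t_k})$ truncated symmetrically at level $\Lambda_0$; $\bar Y^{(n)}$ is defined the same way with $\beta,\theta,\bar\nu^{(n)}$. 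The role of the truncation is to secure three properties simultaneously: $\Delta B^{(n)}_{k+1}$ is symmetric and centred (which preserves the convexity manipulations below); the one-step flow $x\mapsto x+b(t_k,x,\cdot)h+\sigma(t_k,x,\cdot)\Delta B^{(n)}_{k+1}$ is \emph{nondecreasing} for $n$ large, since its increment ratio is $\ge 1-Lh-L\Lambda_0>0$ (the plain Euler flow lacks this when the increment is large, which is exactly why truncation is required); and, $\Lambda_0$ being fixed, $\Delta B^{(n)}_{k+1}\to B_{t_{k+1}}-B_{t_k}$ in $L^p$, so the scheme still converges to the McKean--Vlasov limit.

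The core is the induction on $k$ of the statement $\mathcal H_k$: \emph{for every convex, nondecreasing $f:\RR^{k+1}\to\RR$ with $r$-polynomial growth, $\EE f(\bar X^{(n)}_{t_0},\dots,\bar X^{(n)}_{t_k})\le\EE f(\bar Y^{(n)}_{t_0},\dots,\bar Y^{(n)}_{t_k})$.} Taking $f$ to depend on the last coordinate only, $\mathcal H_k$ contains $\bar\mu^{(n)}_{t_k}\mconright\bar\nu^{(n)}_{t_k}$, and $\mathcal H_0$ is Assumption~(II)(4). For the inductive step I would write $\EE f(\bar X^{(n)}_{t_0:t_{k+1}})=\EE\widetilde f_X(\bar X^{(n)}_{t_0:t_k})$, where $\widetilde f_X$ is the one-step conditional expectation built from $b,\sigma,\bar\mu^{(n)}_{t_k}$, and likewise $\widetilde f_Y$ from $\beta,\theta,\bar\nu^{(n)}_{t_k}$, and then establish two facts. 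First, $\widetilde f_X$ is again convex and nondecreasing: replacing $\sigma$ by $|\sigma|$ through the symmetry of $\Delta B^{(n)}_{k+1}$ gives $\widetilde f_X(x_{0:k})=G\big(x_{0:k-1},x_k,\,x_k+b(t_k,x_k,\cdot)h,\,|\sigma(t_k,x_k,\cdot)|\big)$ with $G(z,u,m,s)=\EE[f(z,u,m+s\Delta B^{(n)}_{k+1})]$, and $G$ is jointly convex, nondecreasing in $(z,u,m)$ and nondecreasing in $s\ge0$ (this last point by Jensen, $\Delta B^{(n)}_{k+1}$ being centred); Assumption~(II)(1) makes $x_k\mapsto x_k+b(t_k,x_k,\cdot)h$ and $x_k\mapsto|\sigma(t_k,x_k,\cdot)|\ge0$ convex, which yields convexity of $\widetilde f_X$, while the monotone flow yields its monotonicity. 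Second, $\widetilde f_X\le\widetilde f_Y$ pointwise: here the mean-field coupling is dealt with by the two-step chain $b(t_k,x,\bar\mu^{(n)}_{t_k})\le b(t_k,x,\bar\nu^{(n)}_{t_k})\le\beta(t_k,x,\bar\nu^{(n)}_{t_k})$ — the first inequality by Assumption~(II)(2), which applies because $\bar\mu^{(n)}_{t_k}\mconright\bar\nu^{(n)}_{t_k}$ by $\mathcal H_k$, the second by~(II)(3) — and symmetrically $|\sigma(t_k,x,\bar\mu^{(n)}_{t_k})|\le|\sigma(t_k,x,\bar\nu^{(n)}_{t_k})|\le|\theta(t_k,x,\bar\nu^{(n)}_{t_k})|$; the pointwise inequality then follows from $f$ nondecreasing (drift comparison), the symmetry of $\Delta B^{(n)}_{k+1}$, and the monotonicity of $a\mapsto\EE[f(\cdots+a\Delta B^{(n)}_{k+1})]$ on $[0,\infty)$ (diffusion comparison). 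This is the single place where Assumptions~(II)(2)--(3) intervene, and it explains why convexity and $\mu$-monotonicity are imposed only on $b,\sigma$. Chaining, $\EE f(\bar X^{(n)}_{t_0:t_{k+1}})=\EE\widetilde f_X(\bar X^{(n)}_{t_0:t_k})\le\EE\widetilde f_X(\bar Y^{(n)}_{t_0:t_k})\le\EE\widetilde f_Y(\bar Y^{(n)}_{t_0:t_k})=\EE f(\bar Y^{(n)}_{t_0:t_{k+1}})$, i.e. $\mathcal H_{k+1}$. (This can equivalently be organised as a backward DPP producing convex nondecreasing value functions along $\bar X^{(n)}$ followed by a forward DPP along $\bar Y^{(n)}$, which is the several forward/backward DPPs alluded to in the abstract.)

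From $\mathcal H_N$ I would conclude as follows. Interpolating the schemes piecewise-linearly into $\mathcal{C}([0,T],\RR)$ and their marginals into $\mathcal{C}([0,T],\PPRR)$, and using that interpolation is linear and nondecreasing, $\mathcal H_N$ gives $\EE F(\bar X^{(n)})\le\EE F(\bar Y^{(n)})$ for every $F$ as in~$(a)$, and, applied to the convex combinations $\lambda\bar X^{(n)}_{t_k}+(1-\lambda)\bar X^{(n)}_{t_{k+1}}$, it also gives $\bar\mu^{(n)}_t\mconright\bar\nu^{(n)}_t$ for all $t\in[0,T]$. For $(b)$, I would first freeze $(\eta_t)=(\bar\mu^{(n)}_t)$, so that $\alpha\mapsto G(\alpha,(\bar\mu^{(n)}_t))$ is convex, nondecreasing with $r$-polynomial growth and the previous step applies, then move from $(\bar\mu^{(n)}_t)$ to $(\bar\nu^{(n)}_t)$ by condition~$(iii)$ together with $\bar\mu^{(n)}_t\mconright\bar\nu^{(n)}_t$ for all $t$. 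Letting $n\to\infty$, uniform $L^p$-bounds on the truncated schemes, the convergences $\bar X^{(n)}\to X$, $\bar Y^{(n)}\to Y$ in $L^p(\PP;\mathcal{C}([0,T],\RR))$ and $d_p((\bar\mu^{(n)}_t),(\mu_t))\to0$, $d_p((\bar\nu^{(n)}_t),(\nu_t))\to0$ (convergence of the truncated McKean--Vlasov Euler scheme), the continuity of $F$ and of $G$ (the latter in $(\eta_t)$ for $d_r$), and the $r$-polynomial growth with $r<p$ (which provides uniform integrability via the $L^p$-bounds) allow one to pass to the limit in the two discrete inequalities and obtain~\eqref{result1} and~\eqref{result2}; Lemma~\ref{onlylineargrowth} is used to keep the various monotone-convex-order statements inside the integrable class $\mathcal{P}_1(\RR)$.

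I expect the main obstacle to be the monotonicity in Step~(a): for the naive Euler scheme the one-step value function need not be nondecreasing even when $b$ and $|\sigma|$ are convex, so a truncated scheme is unavoidable, and one must check that a single choice of truncation is compatible at once with propagation of convexity (through symmetry of the increments), with monotonicity of the flow, and with $L^p$-convergence to the McKean--Vlasov limit. The second substantial task is the quantitative convergence analysis of the truncated Euler scheme of the McKean--Vlasov equation, including the uniform $L^p$-estimates and the $\mathcal{P}_p$-continuity of the interpolated marginals, which underpins the passage to the limit.
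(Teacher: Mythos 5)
Your proposal is correct and rests on the same pillars as the paper — a truncated Euler scheme with symmetric, bounded increments (so that the one-step map is pathwise nondecreasing once $1-Lh-L\Lambda_0\ge 0$, and so that $\sigma$ can be replaced by $|\sigma|$ and Jensen gives monotonicity of $s\mapsto \EE f(\cdot+s\,\Delta B)$ on $s\ge 0$), the chain $b(t,x,\widetilde\mu)\le b(t,x,\widetilde\nu)\le\beta(t,x,\widetilde\nu)$ and $|\sigma|\le|\theta|$ via Assumption (II)-(2),(3), and a passage to the limit using the convergence of the truncated scheme plus uniform integrability from the $L^p$-bounds and $r<p$ — but the discrete-time part is organized differently. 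The paper splits the work into a forward induction on the marginal order $\widetilde X_m\mconright\widetilde Y_m$ (Proposition~\ref{monconEuler}) and a backward dynamic programming comparison through the value functions $\Phi_m,\Psi_m$ of~(\ref{defPhi2}) and~(\ref{defpsi}), which requires the separate Lemma~\ref{propphi} (convexity/monotonicity in $x_{0:m}$ and monotonicity in the frozen measure arguments), whereas you run a single forward induction $\mathcal H_k$ over \emph{all} convex nondecreasing path-functionals of $(\bar X_{t_0},\dots,\bar X_{t_k})$; this subsumes the marginal order (take $f$ depending on the last coordinate and invoke Lemma~\ref{onlylineargrowth}) and delivers the functional inequality in one pass, at the price of having to verify at each step that the one-step conditional expectation $\widetilde f_X$ stays in the class — which you do correctly via the composition argument with $G(z,u,m,s)$, exactly the content of Lemma~\ref{propamonoconv} and Step 1 of Lemma~\ref{propphi}. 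Likewise for part $(b)$ you avoid the paper's measure-augmented value functions $\Phi^{\tilde G}_m,\Psi^{\tilde G}_m$ by freezing $(\eta_t)=(\widetilde\mu^M_t)$, applying the part-$(a)$ discrete inequality, and then swapping the frozen measure flow using hypothesis $(iii)$ together with $\widetilde\mu^M_t\mconright\widetilde\nu^M_t$ for every $t$, the latter obtained by testing $\mathcal H$ on convex combinations of consecutive coordinates — a legitimate shortcut (the measure argument of $G$ enters only as a parameter, not through the dynamics), and arguably cleaner than the paper's interpolation $i_M$ of the discrete marginals. Two points you leave as tasks are genuinely needed but pose no obstruction: the convergence of the truncated scheme, which the paper obtains (only in $L^r$, $r<p$, which suffices) by showing the truncated and regular schemes coincide on an event of probability tending to $1$ combined with the known regular-scheme estimates of Proposition~\ref{convEulerclass}; and the uniform-integrability/continuity bookkeeping in the limit, which is exactly as in the paper's final proof.
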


Moreover, the symmetric case of Theorem \ref{main} remains true, that is, if we replace Assumption (II) by the following Assumption (II'),

\noindent  \textbf{Assumption (II')} (Symmetric setting)
\vspace{-0.2cm}
\begin{enumerate}
\item[$(1')$]  and $(2')$ are same as Assumption (II) - (1) and (2),
\item[$(3')$] For every $(t, x,\mu)\in [0, T]\times \RR\times \mathcal{P}_{1}(\RR)$, we have 
\[\beta(t, x, \mu) \leq  b(t, x, \mu)\quad \text{and}\quad{\color{black}\left|\theta(t, x, \mu)\right|\leq\left|\sigma(t, x, \mu)\right| },\]
\item[$(4')$] $Y_0\mconright X_0$,
\end{enumerate}
we have the following theorem whose proof is very similar to that of Theorem \ref{main}.

\begin{thm} [Symmetric setting] \label{sym} Under Assumption (I) and (II'), if we consider two functionals  $F: \mathcal{C}([0, T], \RR)\rightarrow \RR$ and $G:  \mathcal{C}([0, T], \RR)\times\mathcal{C}\big([0, T], \mathcal{P}_{1}(\RR)\big) \rightarrow  \RR$ respectively satisfying the conditions in Theorem \ref{main} - $(a)$ and $(b)$, then 
\vspace{-0.1cm}
\[\EE F(Y)\leq \EE F(X) \quad\text{and}\quad  \EE G\big(Y, (\nu_{t})_{t\in[0, T]}\big)\leq \EE G\big(X, (\mu_{t})_{t\in[0, T]}\big).  \]
\end{thm}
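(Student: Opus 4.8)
The plan is to mirror the proof of Theorem~\ref{main}, replacing the inequalities of Assumption~(II) by their reversed counterparts in~(II'), and to exploit the fact that the only structural change is the \emph{orientation} of the comparison: since the convexity hypotheses $(1')=(1)$ and $(2')=(2)$ still bear on $b$ and $|\sigma|$ (the coefficients of $X$), every value function will again be built on the $X$-dynamics, and one now compares \emph{downwards} from $Y$ to $X$. Concretely, I first discretize: I replace $X$ and $Y$ by their truncated Euler schemes $\bar X=\bar X^{(m)}$, $\bar Y=\bar Y^{(m)}$ on a grid $t_k=kT/n$ with step $h=T/n$ and common Gaussian increments, and recall from the toolbox of~\cite{liu2020functional} that these schemes converge to $X$, $Y$ in $L^p$ and functionally. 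It therefore suffices to prove $\EE F(\bar Y)\le \EE F(\bar X)$ and $\EE G(\bar Y,(\bar\nu_{t_k}))\le \EE G(\bar X,(\bar\mu_{t_k}))$ and then let $m,n\to\infty$, where $\bar\mu_{t_k}=\mathbb P\circ\bar X_{t_k}^{-1}$ and $\bar\nu_{t_k}=\mathbb P\circ\bar Y_{t_k}^{-1}$.

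Second, a forward induction propagates the marginal monotone convex order \emph{in the reversed direction} $\bar\nu_{t_k}\mconright\bar\mu_{t_k}$, the base case being exactly $(4')$, i.e. $Y_0\mconright X_0$. For the inductive step I freeze the measures and set $\tilde b_k(x)=b(t_k,x,\bar\mu_{t_k})$, $\tilde\sigma_k(x)=\sigma(t_k,x,\bar\mu_{t_k})$, $\tilde\beta_k(x)=\beta(t_k,x,\bar\nu_{t_k})$, $\tilde\theta_k(x)=\theta(t_k,x,\bar\nu_{t_k})$. Assuming $\bar\nu_{t_k}\mconright\bar\mu_{t_k}$, the $\mu$-monotonicity of $(2')$ gives $b(t_k,x,\bar\nu_{t_k})\le \tilde b_k(x)$ and $|\sigma(t_k,x,\bar\nu_{t_k})|\le |\tilde\sigma_k(x)|$, which combined with the pointwise domination $(3')$, $\beta\le b$ and $|\theta|\le|\sigma|$, yields
\[\tilde\beta_k\le \tilde b_k \quad\text{and}\quad |\tilde\theta_k|\le|\tilde\sigma_k|,\]
while $(1')$ keeps $\tilde b_k$ and $|\tilde\sigma_k|$ convex. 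With $P^X_k\phi(x):=\EE\,\phi\big(x+\tilde b_k(x)h+\tilde\sigma_k(x)\sqrt h\,Z\big)$, $Z\sim\mathcal N(0,1)$, and $P^Y_k$ defined analogously with $\tilde\beta_k,\tilde\theta_k$, the one-step lemma used for Theorem~\ref{main} shows that $P^X_k\phi$ is convex and non-decreasing for every convex non-decreasing $\phi$, and that $P^Y_k\phi\le P^X_k\phi$ pointwise, since $\Psi(m,s):=\EE\phi(m+sZ)$ is non-decreasing in $m$ and in $s\ge0$ and $\tilde\beta_k\le\tilde b_k$, $|\tilde\theta_k|\le|\tilde\sigma_k|$. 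Then $\EE\phi(\bar Y_{t_{k+1}})=\EE\,P^Y_k\phi(\bar Y_{t_k})\le\EE\,P^X_k\phi(\bar Y_{t_k})\le\EE\,P^X_k\phi(\bar X_{t_k})=\EE\phi(\bar X_{t_{k+1}})$, the last inequality using $\bar\nu_{t_k}\mconright\bar\mu_{t_k}$ with $P^X_k\phi$ convex non-decreasing; hence $\bar\nu_{t_{k+1}}\mconright\bar\mu_{t_{k+1}}$.

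Third, the backward dynamic programming principle produces the path-dependent comparison. I define the value functionals along the $X$-scheme by $u_n=F$ and $u_k=P^X_k u_{k+1}$ (read as functionals of the trajectory frozen up to $t_k$), convex and non-decreasing by the same propagation, and telescope along the $\bar Y$-trajectory:
\[\EE F(\bar X)-\EE F(\bar Y)=\big(\EE\,u_0(\bar X_0)-\EE\,u_0(\bar Y_0)\big)+\sum_{k=0}^{n-1}\EE\Big[\big(P^X_k-P^Y_k\big)u_{k+1}\Big].\]
The first bracket is $\ge0$ because $Y_0\mconright X_0$ by $(4')$ and $u_0$ is convex non-decreasing; each summand is $\ge0$ because $P^X_k u_{k+1}\ge P^Y_k u_{k+1}$ by $\tilde\beta_k\le\tilde b_k$, $|\tilde\theta_k|\le|\tilde\sigma_k|$. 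This gives $\EE F(\bar Y)\le\EE F(\bar X)$. The statement for $G$ follows identically, additionally using that $G$ is non-decreasing in its measure argument for $\mconright$: from the already established $\bar\nu_{t_k}\mconright\bar\mu_{t_k}$ we get $G(\,\cdot\,,(\bar\nu_{t_k}))\le G(\,\cdot\,,(\bar\mu_{t_k}))$, and the growth and continuity conditions $(ii),(iii)$ of part~$(b)$ control the passage to the limit $m,n\to\infty$, yielding $\EE F(Y)\le\EE F(X)$ and $\EE G(Y,(\nu_t))\le\EE G(X,(\mu_t))$.

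The main obstacle is exactly the one met in Theorem~\ref{main}: ensuring that the relevant value functions remain convex \emph{and} non-decreasing under the (truncated Euler) transition when $\sigma$ is genuinely state-dependent. One must keep in mind that $P^X_k$ does \emph{not} preserve non-decreasingness for a generic convex non-decreasing input (already for $b=0$, $\sigma(x)=|x|$ and $\phi=(\,\cdot\,)^+$ one gets a $V$-shaped image), so the needed non-decreasingness cannot come from naive operator monotonicity; it must be extracted from the convexity and $\mu$-monotonicity of the coefficients of $X$ together with the monotonicity of $F$ (equivalently, from the comparison principle for the $X$-flow), precisely as in the proof of Theorem~\ref{main}. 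Granting that step, the whole content of the symmetric setting is the sign bookkeeping: because $(1'),(2')$ still constrain $b$ and $|\sigma|$, the value functions are still built on $X$, and only the reversed domination $(3')$ and reversed initial order $(4')$ flip every inequality, turning $\EE F(X)\le\EE F(Y)$ into $\EE F(Y)\le\EE F(X)$.
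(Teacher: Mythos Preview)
Your overall architecture is the paper's: forward induction for the marginal order $\widetilde\nu_m\mconright\widetilde\mu_m$ (Proposition~\ref{monconEuler}-(b)), backward dynamic programming with value functions built on the $X$-coefficients (this is Proposition~\ref{funmonoconvexEuler}-(b), with your telescoping sum being an equivalent rewriting of the paper's $\Phi_m\ge\Psi_m$), then passage to the limit via convergence of the truncated scheme (Proposition~\ref{cvgtruncated}). The paper itself simply says Theorem~\ref{sym} is proved ``very similarly'' to Theorem~\ref{main} by invoking Proposition~\ref{funmonoconvexEuler}-(b), and that is exactly what you are doing.

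There is, however, a genuine confusion in your last paragraph that also leaks into your notation. You write $Z\sim\mathcal N(0,1)$ in the definition of $P^X_k$ and then assert that $P^X_k$ ``does not preserve non-decreasingness'' for convex non-decreasing inputs, offering $b=0$, $\sigma(x)=|x|$, $\phi=(\cdot)^+$ as a counterexample. That counterexample applies only to the \emph{regular} Euler operator with untruncated Gaussian noise. The whole purpose of the truncation $Z^h=Z\mathbbm 1_{\{|Z|\le 1/(2\sqrt h\,[\sigma]_{\mathrm{Lip}_x})\}}$ is precisely that the truncated one-step operator \emph{does} propagate monotonicity (Lemma~\ref{propamonoconv}-(a)): since $h<1/(2[b]_{\mathrm{Lip}_x})$ and $\sqrt h\,[\sigma]_{\mathrm{Lip}_x}|Z^h|\le 1/2$, the map $x\mapsto x+hb_m(x)+\sqrt h\,\sigma_m(x)z$ is non-decreasing for every $z$ in the support of $Z^h$. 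In your own example $[\sigma]_{\mathrm{Lip}_x}=1$, hence $\sqrt h|Z^h|\le 1/2$, and for $x<0$ one has $x+|x|\sqrt h Z^h\le x/2<0$, so $\EE\,(x+|x|\sqrt h Z^h)^+=0$: there is no $V$-shape. So the monotonicity you need \emph{does} come from the truncated operator itself, not from any ``comparison principle for the $X$-flow''. Replace $Z$ by $Z^h$ throughout, drop that paragraph, and your argument is the paper's.
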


This paper is organised as follows. 

\textcolor{black}{We adopt the strategy already used in~\cite{liu2020functional}: we establish our results in two steps, first in a discrete time setting  for the Euler scheme of the McKean-Vlasov equations under consideration   and then transferring results by using the  convergence of this schemes. However, in full generality the Euler scheme does not propagate monotonicity so we are led in Section~\ref{euler}, to introduce the truncated Euler scheme reading, e.g. for the McKean-Vlasov process $(X_t)_{t\in[0,T]}$\footnote{ \textcolor{black}{The truncated Euler scheme of $Y$ is defined likewise with $\beta$ and $\theta$.}},}
\begin{equation}\label{trunc}
\begin{cases}
\widetilde{X}^{M}_{t_{m+1}}=\widetilde{X}^{M}_{t_{m}}+h\cdot b(t_{m}, \widetilde{X}^{M}_{t_{m}}, \widetilde{\mu}^{M}_{t_{m}}\,)+\sqrt{h}\cdot\sigma(t_{m}, \widetilde{X}^{M}_{t_{m}}, \widetilde{\mu}^{M}_{t_{m}}\,)Z^{h}_{m+1},\\
 \text{where } Z^{h}_{m+1}  \text{ is a \textcolor{black}{symmetric} truncated Gaussian random variable}\end{cases}\end{equation}
that we use to establish the propagation of convexity and monotonicity of the processes $X$ and $Y$.
Next, in Section \ref{mcvEuler}, we prove that when the time step $h=\frac{T}{M}$ is small enough, the truncated Euler scheme~(\ref{trunc}) propagates the monotone convex order in a marginal sense, namely, 
\[\forall \;0\leq m\leq M, \quad \widetilde{X}^{M}_{t_{m}}\mconright \widetilde{Y}^{M}_{t_{m}}\]
by a forward dynamic programming principle and in a functional sense, namely, 
 \begin{align}
 &\forall \,F: \RR^{M+1}\rightarrow \RR \;\text{non-decreasing,  convex and having $r$-polynomial growth, $1\leq r\leq p$ }\nonumber\\
 &\hspace{4cm} \EE\, F(\widetilde{X}_{0},\ldots, \widetilde{X}_{M})\leq \EE\, F(\widetilde{Y}_{0},\ldots, \widetilde{Y}_{M})\nonumber \end{align}
by a backward dynamic programming principle.  Finally, in Section 3, on  prove the convergence of the truncated Euler scheme~(\ref{trunc}) and we prove Theorem~\ref{main}.

\subsection{Application: convex partitioning and convex bounding}\label{appli}

Theorem~\ref{main} and \ref{sym} show that we can \textit{upper and lower bound}, with respect to the monotone functional convex order, a McKean-Vlasov process by two McKean-Vlasov processes satisfying Assumption (I), (II), or we can \textit{separate}, with respect to the monotone functional convex order, two McKean-Vlasov processes by a McKean-Vlasov process satisfying Assumption (I), (II). Consider the following McKean-Vlasov equations 
\begin{align}
& dX^{\boldsymbol{1}}_{t}=b_{1}(t, X^{\boldsymbol{1}}_{t}, \mu^{\boldsymbol{1}}_{t})dt+\sigma_{1}(t, X^{\boldsymbol{1}}_{t}, \mu^{\boldsymbol{1}}_{t})dB_{t},\quad  dX^{\boldsymbol{2}}_{t}=b_{2}(t, X^{\boldsymbol{2}}_{t}, \mu^{\boldsymbol{2}}_{t})dt+\sigma_{2}(t, X^{\boldsymbol{2}}_{t}, \mu^{\boldsymbol{2}}_{t})dB_{t},\nonumber\\
& dY^{\boldsymbol{1}}_{t}=\beta_{1}(t, Y^{\boldsymbol{1}}_{t},\, \nu^{\boldsymbol{1}}_{t})dt+\theta_{1}(t, Y^{\boldsymbol{1}}_{t}, \,\nu^{\boldsymbol{1}}_{t})dB_{t},\quad  dY^{\boldsymbol{2}}_{t}=\beta_{2}(t, Y^{\boldsymbol{2}}_{t},\, \nu^{\boldsymbol{2}}_{t})dt+\theta_{2}(t, Y^{\boldsymbol{2}}_{t},\, \nu^{\boldsymbol{2}}_{t})dB_{t}\nonumber
\end{align}
with the coefficients  $b_1,\; \sigma_{1}, \;b_2,\;\sigma_{2}$ satisfying Assumption (II)-(1),(2) and consider two functionals $F$ and $G$ satisfying conditions in Theorem~\ref{main} -(a), (b) respectively. We have the following inequalities :

\vspace{-0.3cm}
 \begin{itemize}
\item[$-$] (\textit{Monotone convex bounding inequality}) If $X^{\boldsymbol{1}}_{0}\preceq_{\,mcv}Y^{\boldsymbol{1}}_{0}\preceq_{\,mcv}X^{\boldsymbol{2}}_{0}$, $b_1\leq \beta_1\leq b_2$ and $|\sigma_1|\leq |\theta_1|\leq |\sigma_2|$, then 
\begin{equation}\label{bounding}
\begin{cases}
\EE F(X^{\boldsymbol{1}}) \leq\EE F(Y^{\boldsymbol{1}})\leq \EE F(X^{\boldsymbol{2}}), \\
 \EE G\big(X^{\boldsymbol{1}}, (\mu^{\boldsymbol{1}}_{t})_{t\in[0, T]}\big)\leq\EE G\big(Y^{\boldsymbol{1}}, (\nu^{\boldsymbol{1}}_{t})_{t\in[0, T]}\big)\leq \EE G\big(X^{\boldsymbol{2}}, (\mu_{t}^{\boldsymbol{2}})_{t\in[0, T]}\big);\!\!\!\!\!\!\end{cases}\end{equation}
\item[$-$] (\textit{Monotone convex partitioning inequality}) If $Y^{\boldsymbol{1}}_{0}\preceq_{\,mcv}X^{\boldsymbol{2}}_{0}\preceq_{\,mcv}Y^{\boldsymbol{2}}_{0}$, $\beta_1\leq b_2\leq \beta_2$ and $|\theta_1|\leq |\sigma_2|\leq |\theta_2|$, then 
\begin{equation}\label{partitioning}
\begin{cases}
\EE F(Y^{\boldsymbol{1}})\leq \EE F(X^{\boldsymbol{2}})\leq\EE F(Y^{\boldsymbol{2}}),\\
\EE G\big(Y^{\boldsymbol{1}}, (\nu^{1}_{t})_{t\in[0, T]}\big)\!\leq\! \EE G\big(X^{\boldsymbol{2}}, (\mu^{2}_{t})_{t\in[0, T]}\big)\!\leq\! \EE G\big(Y^{\boldsymbol{2}}, (\nu^{2}_{t})_{t\in[0, T]}\big).\!\!\!\!\!\!\end{cases}
\end{equation}
\end{itemize}
\noindent If we can find appropriate functions $b_1, b_2, \sigma_1, \sigma_2$ which are convex in $x$, do not depend on $\mu$ and satisfy Assumption (II), the results in~(\ref{bounding}) and~(\ref{partitioning}) make a connection between the McKean-Vlasov equation and the regular Brownian diffusion, the latter is much easier to simulate. To be more precise, for the monotone convex bounding (\ref{bounding}), consider the following two examples 
\begin{align}
&\text{Example 1 :}\:\;\begin{cases}dY_{t}=0.05\cdot Y_t\big[\EE\sin^2(Y_t)+2\,\big]dt+ Y_{t}\,dB_{t},\qquad Y_0=1,\\
dX_{t}^{\text{down}}=0.05\cdot X_{t}^{\text{down}}dt+ X_t^{\text{down}} \, dB_{t}, \qquad X_{0}^{\text{down}}=1,\\ dX_{t}^{\text{up}}=0.15\cdot X_{t}^{\text{up}}dt+ X_t^{\text{up}}\, dB_{t}, \qquad X_{0}^{\text{up}}=1,
\end{cases}\nonumber\\
\nonumber\\
&\text{Example 2 :}\nonumber\\
&\begin{cases}dY_{t}'=0.05\cdot \Big[ \log\big(\cosh(Y_{t})\cdot [1.5+\EE \sin(Y_{t})]\big)+2\Big]dt\\
\hspace{3cm}+0.3\cdot\Big[ \log\big(\cosh(Y_{t})\cdot [1.5+\EE \cos(Y_{t})]\big)+2\Big]dB_{t},\quad Y_0=2,\\
dX_{t}^{' \text{down}}\!=0.05\!\cdot\! \Big[ \log\big(\cosh(X_{t}^{' \text{down}})\big)+   1.306\Big]dt \!+\! 0.3\!\cdot\!  \Big[ \log\big(\!\cosh(X_{t}^{' \text{down}})\big)+ \!  1.306\Big]dB_{t},\: X_{0}^{' \text{down}}=2,\\
dX_{t}^{' \text{up}}=0.05\cdot \Big[ \log\big(\cosh(X_{t}^{' \text{up}})\big)+   2.917\Big]dt + 0.3\cdot  \Big[ \log\big(\cosh(X_{t}^{' \text{up}})\big)+  2.917\Big]dB_{t}, \quad X_{t}^{' \text{up}}=2,
\end{cases}\nonumber
\end{align}
and convex functionals $F_{t}(\alpha)=\max(\alpha_t,0)^{2}, \:t\in[0, 1]$. Theorem \ref{main} and \ref{sym} directly imply 
\begin{equation}\label{example1}
\forall \; t\in[0, 1], \quad\EE F_t(X^{\text{down}})\leq \EE F_t(Y)\leq \EE F_t(X^{ \text{up}}), \quad \EE F_t(X^{' \text{down}})\leq \EE F_t(Y')\leq \EE F_t(X^{' \text{up}}).\end{equation}
We show in the following figures the simulation of \eqref{example1} for every $ t\in[0, 1]$. Note that in the following simulation, $\EE F_t(X^{\text{down}})$ and $\EE F_t(X^{\text{up}})$ have explicit formula
\[\EE F_t(X^{\text{down}})= e^{1.1t},\quad \EE F_t(X^{\text{up}})=e^{1.3t}\] 
since $X^{\text{down}}$ and $X^{\text{up}}$ are geometric Brownian motions. Moreover, the two processes $t\mapsto\EE F_t(Y)$ and $t\mapsto\EE F_t(Y')$ are simulated by using the particle method (see e.g. \cite{bossy1997stochastic} and \cite[Section 7.1]{liu:tel-02396797}), $t\mapsto\EE F_t(X^{' \text{down}})$ and $t\mapsto\EE F_t(X^{' \text{up}})$ are simulated by using the Monte Carlo method.

\smallskip
\definecolor{darkgreen}{rgb}{0.0, 0.2, 0.13}
\definecolor{cadmiumgreen}{rgb}{0.0, 0.42, 0.3}
\begin{figure}[h]
\begin{minipage}[c]{0.5\textwidth}
\begin{overpic}[width=0.7\textwidth]{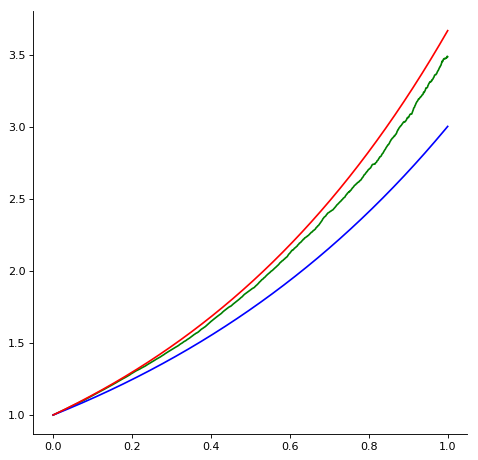}
\put(97,90){\footnotesize\bfseries\color{red}{$t\mapsto \EE F_t(X^{\mathrm{up}}) $}}
\put(97,80){\footnotesize\bfseries\color{cadmiumgreen}{$t\mapsto \EE F_t(Y) $}}
\put(97,70){\footnotesize\bfseries\color{blue}{$t\mapsto \EE F_t(X^{\mathrm{down}}) $}}
\end{overpic}
\end{minipage}
\hspace{0.3cm}
\begin{minipage}[c]{0.5\textwidth}
\begin{overpic}[width=0.7\textwidth]{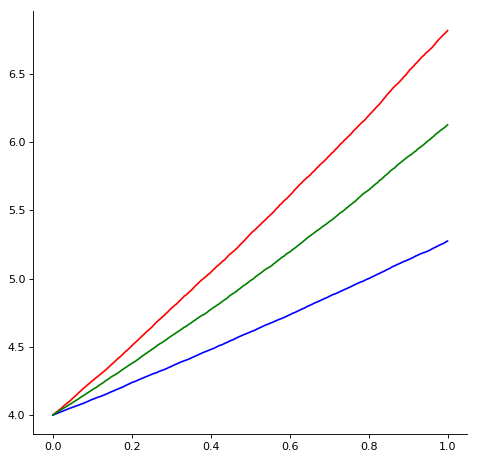}
\put(97,90){\footnotesize\bfseries\color{red}{$t\mapsto \EE F_t(X^{'\mathrm{up}}) $}}
\put(97,70){\footnotesize\bfseries\color{cadmiumgreen}{$t\mapsto \EE F_t(Y') $}}
\put(97,45){\footnotesize\bfseries\color{blue}{$t\mapsto \EE F_t(X^{'\mathrm{down}}) $}}
\end{overpic}
\end{minipage}
\caption{Simulation of \eqref{example1} for Example 1 (\textit{left}) and Example 2 (\textit{right}).}
\end{figure}

\subsection{Regular and truncated Euler scheme}\label{euler}

In our previous paper~\cite{liu2020functional}, we proved that the strong $L^p$-convergence (including the convergence rate) of the following regular Euler scheme of~(\ref{defx}) under Assumption (I):
\begin{equation}\label{Eulerclass}
\begin{cases}
\bar{X}^{M}_{t_{m+1}}=\bar{X}^{M}_{t_{m}}+h\cdot b(t_{m}, \bar{X}^{M}_{t_{m}}, \bar{\mu}^{M}_{t_{m}})+\sqrt{h}\cdot\sigma(t_{m}, \bar{X}^{M}_{t_{m}}, \bar{\mu}^{M}_{t_{m}})Z_{m+1},  \\
\bar{X}_{0}=X_{0}; \;\forall\; m=0,\ldots, M-1,\, \bar{\mu}_{t_m}\coloneqq \mathbb{P}\circ  \bar{X}_{t_m}^{-1} \text{ and } Z_{m+1}\coloneqq\frac{1}{\sqrt{h}}(B_{t_{m+1}}-B_{t_{m}}),
\end{cases}
\end{equation}
where $M\in\mathbb{N}^{*}$ is the number of time discretization, $h\coloneqq\frac{T}{M}$ is the time step, $t_{m}=mh,\;m=0,\ldots, M$. 
It is obvious to see that $Z_{m}, m=1, \dots, M, $ are i.i.d random variables having distribution $\mathcal{N}(0, 1)$. When there is no ambiguity, we drop $M$ and $t$ in the superscript and subscript and write $\bar{X}_{m}$ or $\bar{X}_{m}^{M}$ instead of $\bar{X}^{M}_{t_{m}}$. 

As in~\cite[Lemma 2.3 and 2.4]{liu2020functional}, our proof of the monotone convex order results deeply relies on the propagation of convexity and monotonicity by the transition of a discretization scheme.  Unfortunately, {\color{black} the regular Euler scheme~(\ref{Eulerclass}) propagates the monotonicity only if  $|\sigma|$ is non-decreasing in $x$.}
\begin{prop}\label{ifnondecresing}
Assume conditions in Assumption (I) and (II)-(1),(2) are in force. {\color{black}Let $h\in (0, \frac{1}{[b]_{\mathrm{Lip}_{x}}})$ where $[b]_{\mathrm{Lip}_{x}}$ denotes the Lipschitz constant of the coefficient $b$ in $x$. Let $f:\RR \rightarrow\RR$ be a non-decreasing convex function with sub-exponential growth.  If  
 the function $\left|\sigma\right|$ is non-decreasing in $x$ for every $(t, \mu)\in[0, T]\times \mathcal{P}_{1}(\RR)$, then 
 for a fixed $\mu\in\mathcal{P}_{1}(\RR)$, the functions   
 \[x\longmapsto \EE\Big[ f\big(x+h\cdot b(t_{m}, x, \mu)+\sqrt{h}\cdot \sigma(t_{m}, x, \mu)Z_{m+1}\big)\Big],\quad m=0, \dots , M-1,\]
 are also non-decreasing convex functions. }
\end{prop}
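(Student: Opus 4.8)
The plan is to establish the two properties (convexity and monotonicity) of the map
\[
\Phi_{m,\mu}(x)\coloneqq \EE\Big[f\big(x+h\,b(t_m,x,\mu)+\sqrt{h}\,\sigma(t_m,x,\mu)Z_{m+1}\big)\Big]
\]
separately, fixing $\mu$ and $m$ throughout. First I would set $g_\mu(x)\coloneqq x+h\,b(t_m,x,\mu)$ and observe that, since $b(t_m,\cdot,\mu)$ is convex (Assumption (II)-(1)) and $h<1/[b]_{\mathrm{Lip}_x}$, the function $g_\mu$ is convex \emph{and} increasing: convexity is immediate from $\mathrm{Id}+h\,b$ being a sum of convex functions, and monotonicity follows because for $x\le y$ we have $g_\mu(y)-g_\mu(x)\ge (y-x)-h\,[b]_{\mathrm{Lip}_x}(y-x)=(1-h[b]_{\mathrm{Lip}_x})(y-x)\ge 0$. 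Likewise $|\sigma(t_m,\cdot,\mu)|$ is convex by Assumption (II)-(1) and, by hypothesis of the proposition, non-decreasing in $x$.

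For monotonicity of $\Phi_{m,\mu}$, I would use the symmetry of $Z\coloneqq Z_{m+1}$ (standard Gaussian, hence symmetric) to write, using that $|\sigma|$ may be substituted for $\sigma$ without changing the law of $\sigma(t_m,x,\mu)Z$,
\[
\Phi_{m,\mu}(x)=\EE\Big[f\big(g_\mu(x)+\sqrt{h}\,|\sigma(t_m,x,\mu)|\,Z\big)\Big].
\]
Then for $x\le y$, on the event $\{Z\ge 0\}$ we have $g_\mu(x)+\sqrt h|\sigma(t_m,x,\mu)|Z\le g_\mu(y)+\sqrt h|\sigma(t_m,y,\mu)|Z$ since both $g_\mu$ and $x\mapsto|\sigma(t_m,x,\mu)|$ are non-decreasing; on $\{Z<0\}$ we pair it with its mirror point $-Z>0$ and exploit the symmetry of the law of $Z$ together with the convexity/monotonicity of $f$ to control the sum of the two contributions. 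Concretely, grouping $Z$ and $-Z$ and using that $f$ is non-decreasing and convex, one checks $\EE[f(a+cZ)]\le\EE[f(a'+c'Z)]$ whenever $a\le a'$ and $0\le c\le c'$; applying this with $a=g_\mu(x)$, $a'=g_\mu(y)$, $c=\sqrt h|\sigma(t_m,x,\mu)|$, $c'=\sqrt h|\sigma(t_m,y,\mu)|$ gives $\Phi_{m,\mu}(x)\le\Phi_{m,\mu}(y)$.

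For convexity of $\Phi_{m,\mu}$, fix $x_0,x_1$ and $\lambda\in[0,1]$, write $x_\lambda=(1-\lambda)x_0+\lambda x_1$, and again split on the sign of $Z$, pairing $Z$ with $-Z$. Using convexity of $g_\mu$ and of $|\sigma(t_m,\cdot,\mu)|$ one gets $g_\mu(x_\lambda)\le (1-\lambda)g_\mu(x_0)+\lambda g_\mu(x_1)$ and $|\sigma(t_m,x_\lambda,\mu)|\le(1-\lambda)|\sigma(t_m,x_0,\mu)|+\lambda|\sigma(t_m,x_1,\mu)|$; then for the symmetrized pair $\{Z,-Z\}$ the quantity $\tfrac12\big(f(g_\mu(x_\lambda)+\sqrt h|\sigma(t_m,x_\lambda,\mu)|Z)+f(g_\mu(x_\lambda)-\sqrt h|\sigma(t_m,x_\lambda,\mu)|Z)\big)$ is first bounded above, by monotonicity of $f$ in its first argument combined with the convexity estimate on $g_\mu$ and $|\sigma|$ (here one uses that for $a\le a'$ and $0\le c\le c'$ the symmetrized average $\tfrac12(f(a+cz)+f(a-cz))$ is $\le \tfrac12(f(a'+c'z)+f(a'-c'z))$ for every $z$, again a one-variable convexity-plus-monotonicity fact), by the corresponding average at the linear combination of endpoints, and then by convexity of $z\mapsto\tfrac12(f(u+vz)+f(u-vz))$ jointly with Jensen in $(u,v)$ one concludes $\Phi_{m,\mu}(x_\lambda)\le(1-\lambda)\Phi_{m,\mu}(x_0)+\lambda\Phi_{m,\mu}(x_1)$. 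The sub-exponential growth of $f$ together with $Z\in L^q$ for all $q$ and the linear growth of $b,\sigma$ in $x$ guarantees all expectations are finite and the interchanges legitimate.

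The main obstacle I expect is the bookkeeping in the two elementary ``one-variable'' lemmas that do the real work: namely that, for $f$ convex non-decreasing and $Z$ symmetric, the map $(a,c)\mapsto\EE[f(a+cZ)]$ on $\mathbb{R}\times\mathbb{R}_+$ is non-decreasing in each coordinate and jointly convex. The monotonicity in $c$ is the delicate one — it is exactly where non-negativity of $c$ (hence the hypothesis that $|\sigma|$ be non-decreasing, so that $g_\mu$ and $|\sigma(t_m,\cdot,\mu)|$ move the same way) and symmetry of $Z$ are both essential — and it is precisely the failure of this step for general $\sigma$ (not $|\sigma|$ non-decreasing) that motivates the truncated scheme introduced afterwards; I would isolate it as a small lemma and prove it by the $Z\leftrightarrow -Z$ pairing argument sketched above, then feed it into the composition with the convex increasing inner maps $g_\mu$ and $|\sigma(t_m,\cdot,\mu)|$.
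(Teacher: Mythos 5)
Your proposal is correct, but the monotonicity half takes a genuinely different route from the paper. The paper proves monotonicity by first regularizing $f$ (Gaussian smoothing), then writing $\EE f\big(\EMH(x,\mu,Z_{m+1})\big)-\EE f\big(\EMH(y,\mu,Z_{m+1})\big)$ via a first-order Taylor expansion and applying a co-monotonicity (covariance) inequality: both $z\mapsto\int_0^1 f'(\cdot)\,du$ and $z\mapsto x-y+h(b_m(x)-b_m(y))+\sqrt h(\sigma_m(x)-\sigma_m(y))z$ are non-decreasing when $|\sigma|$ is non-decreasing, so the expectation of their product dominates the product of expectations, and then $\EE Z_{m+1}=0$ together with $h<1/[b]_{\mathrm{Lip}_x}$ gives non-negativity. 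You instead factor the map through $x\mapsto\big(g_\mu(x),\sqrt h\,|\sigma_m(x)|\big)$ with $g_\mu(x)=x+h\,b_m(x)$ non-decreasing (same use of $h<1/[b]_{\mathrm{Lip}_x}$) and invoke the elementary fact that $(a,c)\mapsto\EE f(a+cZ)$ is non-decreasing in $a$ (monotonicity of $f$) and in $c\ge 0$ (convexity plus symmetry of $Z$, i.e. the untruncated analogue of Lemma~\ref{conZtrun}, which indeed holds for any symmetric $Z$ with enough integrability, here Gaussian against sub-exponential $f$). Your route avoids the smoothing step and the Taylor/co-monotony machinery entirely, and it makes the role of the hypothesis transparent: monotonicity of $|\sigma|$ is exactly what aligns the scale parameter with the shifted location. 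What the paper's argument buys is that the co-monotony inequality pinpoints the precise quantity whose sign can flip for decreasing $\sigma$, which leads directly to the explicit counterexample given right after the proof. Your convexity argument (joint convexity of $(a,c)\mapsto\EE f(a+cZ)$ on $\RR\times\RR_+$ composed with the convex maps $g_\mu$ and $|\sigma_m|$, using componentwise monotonicity) does not use the monotonicity of $|\sigma|$ and is essentially the chain the paper treats as ``obvious'', i.e. the untruncated version of Lemma~\ref{propamonoconv}-$(b)$; only the phrasing of your last convexity step (``convexity of $z\mapsto\frac12(f(u+vz)+f(u-vz))$ jointly with Jensen'') should be cleaned up to say that the joint convexity in $(u,v)$ is what is used, as you correctly state just before.
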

\noindent {\color{black}The proof of the above proposition and a counterexample when $\sigma$ is positive and decreasing in $x$ are postponed in Appendix B. To drop the monotony assumption on $\left|\sigma\right|$,  we use in this paper a \textit{truncated} Euler scheme, instead of~(\ref{Eulerclass}), to establish the monotone convex order result (Theorem~\ref{main}).}
We first define the truncation function $T^{\,h}:\RR\rightarrow\big[-\frac{1}{2\sqrt{h}[\sigma]_{\text{Lip}_{x}}}, \frac{1}{2\sqrt{h}[\sigma]_{\text{Lip}_{x}}}\big]$ by 
\begin{equation}\label{truncated}
T^{\,h}(z)\coloneqq z\mathbbm{1}_{\big\{\left|z\right|\leq \frac{1}{2\sqrt{h}[\sigma]_{\text{Lip}_{x}}}\big\}},
\end{equation} 
\noindent where $[\sigma]_{\text{Lip}_{x}}$ is the Lipschitz constant of $\sigma$  in $x$. 
The \textit{truncated} Euler \textcolor{black}{schemes of $X$ and $Y$ with step $h$ are  defined respectively} by 
\begin{align}\label{EulertruncatedX}
&\widetilde{X}^{M}_{t_{m+1}}=\widetilde{X}^{M}_{t_{m}}+h\cdot b(t_{m}, \widetilde{X}^{M}_{t_{m}}, \widetilde{\mu}^{M}_{t_{m}}\,)+\sqrt{h}\cdot\sigma(t_{m}, \widetilde{X}^{M}_{t_{m}}, \widetilde{\mu}^{M}_{t_{m}}\,)Z^{h}_{m+1}, \quad \widetilde{X}^{M}_{0}=X_{0},\\
\label{EulertruncatedY}
&\widetilde{Y}^{M}_{t_{m+1}}\,=\,\widetilde{Y}^{M}_{t_{m}}\,+h\cdot \beta(t_{m}, \widetilde{Y}^{M}_{t_{m}},\, \widetilde{\nu}^{M}_{\,t_{m}}\,)+\sqrt{h}\cdot\theta(t_{m}, \widetilde{Y}^{M}_{t_{m}}, \,\widetilde{\nu}^{M}_{\,t_{m}}\,)Z^{h}_{m+1}, \,\quad \widetilde{Y}^{M}_{0}\,=Y_{0}\,,
\end{align}
where for every $m=1,\ldots, M$, $\widetilde{\mu}_{t_m}$, $\widetilde{\nu}_{t_m}$ denote the respective probability distributions of $\widetilde{X}_{t_m}$, $\widetilde{Y}_{t_m}$ and
\begin{equation}\label{Ztruncated}
Z_m^h=T^{\,h}(Z_{m})\quad \text{with (the same) } Z_{m} \text{ defined in~(\ref{Eulerclass}).} 
\end{equation} 
When there is no ambiguity, we drop the $M$ and $t$ in the superscript and subscript and write $\widetilde{X}_{m}$, $\widetilde{Y}_{m}$ instead of $\widetilde{X}^{M}_{t_{m}}$, $\widetilde{Y}^{M}_{t_{m}}$.



\section{Monotone convex order for the truncated Euler scheme}\label{mcvEuler}
For any $m\in\mathbb{N}$, we define a partial order $\preceq$ on $\RR^{m+1}$ by 
\begin{equation}\label{partialorder}
(x_{0},\ldots, x_{m})\preceq (y_{0},\ldots, y_{m}), \quad\text{ if }\;\forall \; 0\leq i \leq m, \;x_i\leq y_i.
\end{equation} 
Moreover, we call a function $F: \RR^{m+1}\rightarrow \RR$ has an \textit{$r$-polynomial growth} if there exists a constant $C>0$ such that 
\begin{equation}\label{rpolygrowth} 
\forall x=(x_0,\ldots, x_m)\in \RR^{m+1},\quad\left|F(x)\right|\leq C\big(1+\sup_{0\leq i\leq m}\left|x_{i}\right|^{r}\big).
\end{equation}
The main result of this section is the monotone convex order for the truncated Euler scheme, described in the following proposition. 
\begin{prop}\label{funmonoconvexEuler}
Let $h\in (0, \frac{1}{2[b]_{\mathrm{Lip}_{x}}})$.  Let $(\widetilde{X}_{m})_{0\leq m\leq M}$ and $(\widetilde{Y}_{m})_{0\leq m\leq M}$ denote the truncated Euler schemes~(\ref{EulertruncatedX}) and~(\ref{EulertruncatedY}). Let $F: \RR^{M+1}\rightarrow\RR$ be a convex function having an $r$-polynomial growth, $1\leq r\leq p$, non-decreasing with respect to the partial order defined in~(\ref{partialorder}). 
\vspace{-0.2cm}
\begin{enumerate}[$(a)$]
\item Under Assumption (I) and (II), we have $ \EE\, F(\widetilde{X}_{0},\ldots, \widetilde{X}_{M})\leq \EE\, F(\widetilde{Y}_{0},\ldots, \widetilde{Y}_{M})$.
\item {\color{black}(Symmetric setting) Under Assumption (I) and (II'),} we have \[\EE\, F(\widetilde{Y}_{0},\ldots, \widetilde{Y}_{M})\leq \EE\, F(\widetilde{X}_{0},\ldots, \widetilde{X}_{M}).\]
\end{enumerate}
\end{prop}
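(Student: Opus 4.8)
\textbf{Proof strategy for Proposition \ref{funmonoconvexEuler}.}

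The plan is to prove both statements by a backward dynamic programming principle on the time indices $m=M,M-1,\dots,0$, while simultaneously exploiting a forward propagation of the marginal monotone convex order $\widetilde X_m\mconright\widetilde Y_m$. So I would organize the argument in two nested inductions. First, as a preliminary step, I would establish the \emph{marginal} monotone convex order: for every $m$, $\widetilde X_m\mconright\widetilde Y_m$. This is itself a forward induction on $m$. The base case is Assumption (II)-(4), $X_0\mconright Y_0$. For the inductive step, fix a non-decreasing convex test function $f$ with (say) linear growth, as permitted by Lemma \ref{onlylineargrowth}. One wants $\EE f(\widetilde X_{m+1})\le \EE f(\widetilde Y_{m+1})$. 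Condition on $\widetilde X_m$ (resp. $\widetilde Y_m$) and on the value of the measure argument $\widetilde\mu_{t_m}$ (resp. $\widetilde\nu_{t_m}$), which are deterministic, and use the key one-step lemma: the map
\[
x\longmapsto \EE\big[f\big(x+h\,b(t_m,x,\mu)+\sqrt h\,\sigma(t_m,x,\mu)\,Z^h_{m+1}\big)\big]
\]
is non-decreasing and convex in $x$ whenever $h<\tfrac{1}{2[b]_{\mathrm{Lip}_x}}$. This is the truncated-Euler analogue of Proposition \ref{ifnondecresing}, and the truncation of the Gaussian increment is precisely what removes the monotony hypothesis on $|\sigma|$: on the event $\{|Z^h_{m+1}|\le \tfrac{1}{2\sqrt h[\sigma]_{\mathrm{Lip}_x}}\}$ the map $x\mapsto x+\sqrt h\,\sigma(t_m,x,\mu)\,z$ is non-decreasing (Lipschitz perturbation of the identity with constant $<\tfrac12$), and symmetry of $Z^h_{m+1}$ lets one pair $z$ with $-z$ to handle convexity as in Hajek's argument. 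I would state and prove this one-step lemma (or cite it from the forthcoming Section \ref{euler} material) before anything else.

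Granting the one-step lemma, the marginal step goes: using convexity and monotonicity of $b(t_m,\cdot,\mu)$ and $|\sigma(t_m,\cdot,\mu)|$ in $x$ (Assumption (II)-(1)), the composed function $\Phi_f(x):=\EE[f(x+h b(t_m,x,\widetilde\mu_{t_m})+\sqrt h\sigma(t_m,x,\widetilde\mu_{t_m})Z^h_{m+1})]$ is non-decreasing convex, hence $\EE\Phi_f(\widetilde X_m)\le\EE\Phi_f(\widetilde Y_m')$ where $\widetilde Y_m'$ is built with $b,\sigma$ but distributed as $\widetilde Y_m$ — here the induction hypothesis $\widetilde X_m\mconright\widetilde Y_m$ is used. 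Then one upgrades from $(b,\sigma,\widetilde\mu_{t_m})$ to $(\beta,\theta,\widetilde\nu_{t_m})$ in three moves: $b\le\beta$ pointwise (Assumption (II)-(3)) pushes the drift up and $f$ non-decreasing absorbs it; $\widetilde\mu_{t_m}\mconright\widetilde\nu_{t_m}$ together with Assumption (II)-(2) gives $b(t_m,x,\widetilde\mu_{t_m})\le b(t_m,x,\widetilde\nu_{t_m})$ and $|\sigma(t_m,x,\widetilde\mu_{t_m})|\le|\sigma(t_m,x,\widetilde\nu_{t_m})|$; finally $|\sigma|\le|\theta|$ (Assumption (II)-(3)) together with the fact that $z\mapsto \EE f(a+ s Z^h_{m+1})$ restricted appropriately is a non-decreasing function of $s\ge 0$ (again by symmetry of $Z^h_{m+1}$ and convexity of $f$) lets one replace $\sigma$ by $\theta$. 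Collecting these gives $\EE f(\widetilde X_{m+1})\le\EE f(\widetilde Y_{m+1})$, i.e. $\widetilde X_{m+1}\mconright\widetilde Y_{m+1}$, closing the forward induction.

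For the functional statement $(a)$ I would run the backward induction. Define, for $0\le m\le M$, the partial conditional functionals
\[
F_m(x_0,\dots,x_m):=\EE\big[F(x_0,\dots,x_m,\widetilde X_{m+1}^{x_m},\dots,\widetilde X_M^{x_m})\big],
\]
where $(\widetilde X_k^{x_m})_{k\ge m}$ is the truncated Euler scheme started from $\widetilde X_m=x_m$ and driven by $Z^h_{m+1},\dots$ (the measure arguments $\widetilde\mu_{t_k}$ stay frozen at their true, deterministic values — this is why no closed McKean-Vlasov fixed point is needed at this stage). The claim is that each $F_m$ is convex, non-decreasing in the partial order \eqref{partialorder}, and has $r$-polynomial growth, with $F_M=F$. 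The inductive step from $F_{m+1}$ to $F_m$ uses $F_m(x_0,\dots,x_m)=\EE[F_{m+1}(x_0,\dots,x_m, x_m+h b(t_m,x_m,\widetilde\mu_{t_m})+\sqrt h\sigma(t_m,x_m,\widetilde\mu_{t_m})Z^h_{m+1})]$ and the one-step lemma applied coordinate-wise — monotonicity in $x_m$ needs the Lipschitz-$<\tfrac12$ bound from the truncation so the last coordinate is non-decreasing in $x_m$, and convexity needs Assumption (II)-(1) plus the symmetric-$Z^h$ trick. The $r$-polynomial growth is preserved because $|b|,|\sigma|$ have at most linear growth in $x$ (from Assumption (I)) and $Z^h_{m+1}$ is bounded, so $L^p$-type bounds propagate; this also guarantees all the expectations are finite, since $X_0\in L^p$ and the increments are $L^\infty$. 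Then, comparing $\EE F(\widetilde X)=\EE F_0(\widetilde X_0)$ with $\EE F(\widetilde Y)=\EE F_0(\widetilde Y_0)$ — wait, the two schemes use different coefficients, so one must instead compare $\EE F_0^{X}(\widetilde X_0)$ with $\EE F_0^{Y}(\widetilde Y_0)$ where $F_m^X,F_m^Y$ are built with $(b,\sigma)$ and $(\beta,\theta)$ respectively. The bridge is: at each backward step, $F_m^X\le F_m^Y$ holds pointwise — proved by a second (ascending in $m$, or rather descending with an auxiliary comparison) induction using exactly the same three moves as in the marginal argument ($b\le\beta$, monotonicity of $b,|\sigma|$ in $\mu$ together with $\widetilde\mu_{t_m}\mconright\widetilde\nu_{t_m}$ from the preliminary step, and $|\sigma|\le|\theta|$), plus the inductive hypothesis $F_{m+1}^X\le F_{m+1}^Y$ and the fact that $F_{m+1}^Y$ is non-decreasing convex. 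At $m=0$ this yields $\EE F(\widetilde X)=\EE F_0^X(\widetilde X_0)\le \EE F_0^Y(\widetilde X_0)\le \EE F_0^Y(\widetilde Y_0)=\EE F(\widetilde Y)$, the last inequality by $X_0\mconright Y_0$ and $F_0^Y$ non-decreasing convex. Part $(b)$ is obtained by exchanging the roles of $X$ and $Y$ throughout, which is exactly what Assumption (II') provides.

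The main obstacle — and the real content of the proposition — is the one-step lemma that the truncated-Euler transition operator preserves ``non-decreasing and convex'' \emph{without} any monotonicity assumption on $|\sigma|$; Proposition \ref{ifnondecresing} shows the untruncated scheme genuinely fails here, so the whole point is that the truncation of $Z$ to the interval $[-\tfrac{1}{2\sqrt h[\sigma]_{\mathrm{Lip}_x}},\tfrac{1}{2\sqrt h[\sigma]_{\mathrm{Lip}_x}}]$, combined with the step restriction $h<\tfrac{1}{2[b]_{\mathrm{Lip}_x}}$ and the symmetry of the truncated increment, makes $x\mapsto x+h b(t_m,x,\mu)+\sqrt h\sigma(t_m,x,\mu)z$ monotone for every admissible $z$ and lets the $z\leftrightarrow-z$ pairing salvage convexity à la Hajek. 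Everything else — propagating polynomial growth, the three $b\le\beta$/$\mu$-monotone/$|\sigma|\le|\theta|$ comparison moves, assembling the two inductions — is routine once that lemma is in place.
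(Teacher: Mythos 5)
Your overall architecture (a one-step lemma showing the truncated transition preserves ``non-decreasing convex'', a forward induction giving $\widetilde X_m\mconright\widetilde Y_m$, then a backward dynamic-programming comparison of value functions built with $(b,\sigma)$ and $(\beta,\theta)$) is the same as the paper's, and the marginal part and the one-step lemma are correctly identified. But the backward comparison as you wrote it has a genuine gap: at the inductive step you invoke ``the fact that $F^Y_{m+1}$ is non-decreasing convex'', and again at the end you conclude $\EE F^Y_0(\widetilde X_0)\le \EE F^Y_0(\widetilde Y_0)$ ``by $X_0\mconright Y_0$ and $F^Y_0$ non-decreasing convex''. Under Assumption (II) nothing of the sort is available: convexity in $x$ and monotonicity in $\mu$ are assumed only for $b$ and $|\sigma|$, not for $\beta$ and $|\theta|$ (only the pointwise dominations $b\le\beta$, $|\sigma|\le|\theta|$ are assumed), and the propagation lemma for convexity genuinely needs convexity of the drift and of the absolute diffusion coefficient — take $F(x_{0:M})=x_M$ and a non-convex $\beta$ to see that $F^Y_{M-1}(x_{0:M-1})=x_{M-1}+h\beta(t_{M-1},x_{M-1},\widetilde\nu_{M-1})$ need not be convex. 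Since the diffusion-coefficient upgrade $|\sigma|\le|\theta|$ and the measure upgrade $\widetilde\mu_m\mconright\widetilde\nu_m$ both require the integrated function to be convex and non-decreasing (the Jensen/symmetry argument of Lemma~\ref{conZtrun}), your chain of inequalities cannot be justified in the order you propose. (The moment bound and truncation level for the $Y$-scheme are also calibrated on $[b]_{\mathrm{Lip}_x}$, $[\sigma]_{\mathrm{Lip}_x}$, so one cannot even claim monotonicity propagation on the $Y$-side.)

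The fix is a reordering, and it is exactly how the paper arranges things: all three upgrades (measure argument via $\widetilde\mu_m\mconright\widetilde\nu_m$ and (II)-(2), then $b\le\beta$ and $|\sigma|\le|\theta|$ via (II)-(3)) must be performed inside the expectation \emph{against the $X$-side value function} $F^X_{m+1}$, whose convexity and monotonicity are what the backward induction actually establishes (this is the content of Lemma~\ref{propphi}, proved only for $\Phi_m$, never for $\Psi_m$); the induction hypothesis $F^X_{m+1}\le F^Y_{m+1}$ is applied only as the \emph{last} move, once the transition already carries the coefficients $(\beta,\theta,\widetilde\nu_m)$. Likewise at $m=0$ the initial-law comparison must pass through the $X$-side function: $\EE F(\widetilde X)=\EE F^X_0(\widetilde X_0)\le \EE F^X_0(\widetilde Y_0)\le \EE F^Y_0(\widetilde Y_0)=\EE F(\widetilde Y)$, the first inequality by $X_0\mconright Y_0$ and convexity/monotonicity of $F^X_0$, the second by the pointwise comparison. (This mirrors the paper's chain $\EE\Phi_0(\widetilde X_0,\widetilde\mu_{0:M})\le\EE\Phi_0(\widetilde Y_0,\widetilde\mu_{0:M})\le\EE\Phi_0(\widetilde Y_0,\widetilde\nu_{0:M})\le\EE\Psi_0(\widetilde Y_0,\widetilde\nu_{0:M})$, where every convexity/monotonicity property used is a property of $\Phi$.) With that reordering your frozen-measure formulation — which absorbs the paper's separate ``monotone in the measure arguments'' lemma into the single comparison $F^X_m\le F^Y_m$ using the marginal order at each step — does go through; as stated, however, the argument relies on a property of the $(\beta,\theta)$-scheme that the assumptions deliberately do not provide.
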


Remark that, as $F$ has an $r$-polynomial growth, $1\leq r\leq p$, the integrability of $F(\bar{X}_{0}, \ldots, \bar{X}_{M})$  and  $F(\bar{Y}_{0}, \ldots, \bar{Y}_{M})$ is guaranteed since under Assumption (I), the truncated Euler scheme has a finite $p$-th moments:
\begin{equation}\label{polymot}
\forall M\geq 1, \quad \Big\|  \sup_{0\leq m\leq M}|\widetilde X^M_{t_m}|\Big\|_p\leq C\big (1+\|X_0\|_p\big).
\end{equation}
where $C$ does not depend on $M$. This result is a direct consequence of~\cite[Lemma A.4]{liu2020functional} for the regular Euler scheme
\begin{equation}\label{polymomentdiscret}
\forall M\geq 1,  \quad 
\Big\Vert\sup_{0\leq m\leq M}\left| \bar{X}^{M}_{t_{m}}\right|\Big\Vert_{p}\leq C\big(1+\vertii{X_{0}}_{p}\big),
\end{equation}
as $|Z^h_m|\le |Z_m|, \;1\leq m\leq M$ by construction.


Now we introduce for every $m=0,\ldots, M-1,$  the Euler operators $\EMH$ in order to simplify the description of the regular or truncated Euler scheme
\begin{equation}\label{euleroperator}
\EMH(x, \mu, Z)\coloneqq x+h\cdot b(t_{m}, x, \mu)+\sqrt{h}\cdot \sigma(t_{m}, x, \mu)Z,
\end{equation}
where $\,x\in\RR$, $\mu\in\mathcal{P}_{1}(\RR)$ and $Z: \,(\Omega, \mathcal{F}, \mathbb{P})\rightarrow \big(\RR, Bor(\RR)\big)$ is a random variable. Then,
the truncated Euler scheme can be written by $\widetilde{X}_{m+1}=\EMH(\widetilde{X}_{m}, \widetilde{\mu}_{m}, Z^{h}_{m+1})$. For $f:\RR\rightarrow\RR$ a function with sub-exponential growth, let $\widetilde{P}_{m}^{\,h}$ denote the transition operator of the truncated Euler scheme defined by
\begin{align}\label{defP}
\widetilde{P}_{m+1}^{\,h}(f)(x, \mu)\coloneqq &\,\EE f\big(\EMH(x, \mu, Z_{m+1}^{h})\big)
=\,\EE f\big( x+h\cdot b(t_{m}, x, \mu)+\sqrt{h}\sigma(t_{m}, x, \mu)Z_{m+1}^{h}\big).
\end{align}
Remark that if $f$ has an $r$-polynomial growth, $1\leq r\leq p$, for a fixed $\mu\in\mathcal{P}_{1}(\RR)$, the function $x\mapsto \widetilde{P}_{m}^{\,h}(f)(x, \mu)$ has also an $r$-polynomial growth. 

Before proving Proposition~\ref{funmonoconvexEuler}, we first show in Section~\ref{Eulerpropa} that the truncated Euler scheme propagates the marginal monotone convex order.

\subsection{Marginal monotone convex order for the truncated Euler scheme}\label{Eulerpropa}

The main result of this section is the following proposition.
\begin{prop}\label{monconEuler}
Let $h\in (0, \frac{1}{2[b]_{\mathrm{Lip}_{x}}})$.  Let $(\widetilde{X}_{m})_{0\leq m\leq M}$ and $(\widetilde{Y}_{m})_{0\leq m\leq M}$ be the truncated Euler schemes defined by~(\ref{EulertruncatedX}) and~(\ref{EulertruncatedY}). 
\vspace{-0.3cm}\begin{enumerate}[$(a)$]
\item Under Assumption (I) and (II), we have 
$\;\widetilde{X}_{m}\mconright\widetilde{Y}_{m}, \quad m=0,\ldots, M.$
\item (Symmetric setting) {\color{black}Under Assumption (I) and (II'), we have 
\[\widetilde{Y}_{m}\mconright\widetilde{X}_{m}, \quad m=0,\ldots, M.\]}
\end{enumerate}\end{prop}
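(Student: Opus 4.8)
The plan is to prove Proposition~\ref{monconEuler}~$(a)$ by a forward induction on the time index $m$, carrying along two properties simultaneously: (i) the marginal order $\widetilde X_m \mconright \widetilde Y_m$, and (ii) a structural statement that for every fixed $\mu$ the map $x\mapsto \widetilde P_{m+1}^{\,h}(f)(x,\mu)$ is non-decreasing and convex whenever $f$ is. The base case $m=0$ is exactly Assumption (II)-(4). For the inductive step, suppose $\widetilde X_m \mconright \widetilde Y_m$, and let $\varphi$ be an arbitrary non-decreasing convex function (by Lemma~\ref{onlylineargrowth} it suffices to treat $\varphi$ with linear growth, which guarantees all the expectations below are finite given~\eqref{polymot}). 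I would write
\[
\EE\,\varphi(\widetilde X_{m+1}) = \EE\Big[\widetilde P_{m+1}^{\,h}(\varphi)\big(\widetilde X_m,\widetilde\mu_m\big)\Big],
\qquad
\EE\,\varphi(\widetilde Y_{m+1}) = \EE\Big[\widetilde Q_{m+1}^{\,h}(\varphi)\big(\widetilde Y_m,\widetilde\nu_m\big)\Big],
\]
where $\widetilde Q$ is the transition operator built from $\beta,\theta$, and then interpolate through three comparisons.

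\textbf{Key steps.}
First, the \emph{truncation makes the one-step operator monotone in $x$}: this is the truncated-Euler analogue of Proposition~\ref{ifnondecresing}, but now \emph{without} any monotonicity hypothesis on $|\sigma|$. The point is that with $h<\tfrac{1}{2[b]_{\mathrm{Lip}_x}}$ and $|Z^h_{m+1}|\le \tfrac{1}{2\sqrt h[\sigma]_{\mathrm{Lip}_x}}$, the derivative (or difference quotient) of $x\mapsto x + h\,b(t_m,x,\mu) + \sqrt h\,\sigma(t_m,x,\mu)z$ is bounded below by $1 - h[b]_{\mathrm{Lip}_x} - \sqrt h[\sigma]_{\mathrm{Lip}_x}|z| > 0$ for all admissible $z$, so the map is non-decreasing in $x$ pathwise; composing with the non-decreasing convex $\varphi$ and taking expectation preserves monotonicity. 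For convexity in $x$ one uses convexity of $b(t_m,\cdot,\mu)$ and of $|\sigma(t_m,\cdot,\mu)|$, the symmetry of $Z^h_{m+1}$ (so one may replace $\sigma$ by $|\sigma|$ via the Remark), and the standard fact that $\EE\,\varphi(A x + B|\,\cdot\,|\,Z)$-type expressions with $Z$ symmetric inherit convexity — this is the truncated version of the argument behind Proposition~\ref{ifnondecresing}; I would cite/adapt that Appendix B computation. Second, \emph{comparison of coefficients at fixed argument}: since $b\le\beta$ and $|\sigma|\le|\theta|$ (Assumption (II)-(3)) and $Z^h$ is symmetric, for any non-decreasing convex $f$,
\[
\widetilde P_{m+1}^{\,h}(f)(x,\mu)\;\le\;\widetilde Q_{m+1}^{\,h}(f)(x,\mu)\qquad\forall\,(x,\mu),
\]
because increasing the drift shifts the argument up (use monotonicity of $f$) and increasing the volatility amplitude spreads the symmetric noise (use convexity of $f$ together with the conditional Jensen/Hajek-type inequality for symmetric variables). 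Third, \emph{monotonicity of $\beta,|\theta|$ in the measure} (Assumption (II)-(2)) gives, combined with $\widetilde\mu_m\mconright\widetilde\nu_m$ (the inductive hypothesis implies this for the laws), that $\widetilde Q_{m+1}^{\,h}(f)(x,\widetilde\mu_m)\le\widetilde Q_{m+1}^{\,h}(f)(x,\widetilde\nu_m)$ pointwise in $x$. Chaining: $\widetilde P_{m+1}^{\,h}(\varphi)(\cdot,\widetilde\mu_m)$ is non-decreasing convex (step~1) and $\le\widetilde Q_{m+1}^{\,h}(\varphi)(\cdot,\widetilde\mu_m)\le\widetilde Q_{m+1}^{\,h}(\varphi)(\cdot,\widetilde\nu_m)$, so
\[
\EE\,\varphi(\widetilde X_{m+1})=\EE\big[\widetilde P_{m+1}^{\,h}(\varphi)(\widetilde X_m,\widetilde\mu_m)\big]
\le \EE\big[\widetilde Q_{m+1}^{\,h}(\varphi)(\widetilde X_m,\widetilde\nu_m)\big]
\le \EE\big[\widetilde Q_{m+1}^{\,h}(\varphi)(\widetilde Y_m,\widetilde\nu_m)\big]
=\EE\,\varphi(\widetilde Y_{m+1}),
\]
where the last inequality is precisely $\widetilde X_m\mconright\widetilde Y_m$ applied to the non-decreasing convex function $x\mapsto\widetilde Q_{m+1}^{\,h}(\varphi)(x,\widetilde\nu_m)$. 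This closes the induction, and part $(b)$ follows by the symmetric argument swapping the roles of $(b,\sigma,X)$ and $(\beta,\theta,Y)$ under Assumption (II').

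\textbf{Main obstacle.}
The delicate point is Step~1: establishing that the truncated one-step operator preserves \emph{both} monotonicity and convexity in $x$ simultaneously, with convexity being the subtle half. Monotonicity is clean — it is exactly what the truncation threshold $\tfrac{1}{2\sqrt h[\sigma]_{\mathrm{Lip}_x}}$ (together with $h<\tfrac{1}{2[b]_{\mathrm{Lip}_x}}$) is engineered for. But convexity must survive even though $\sigma$ itself need not be convex (only $|\sigma|$ is); one has to pass from $\sigma$ to $|\sigma|$ using the symmetry of $Z^h_{m+1}$ (the Remark guarantees this does not change the law of the scheme), and then argue that $\EE\,f\big(g(x)+a(x)\,Z^h\big)$ is convex in $x$ when $g$ is convex non-decreasing, $a\ge 0$ convex, $f$ convex non-decreasing, and $Z^h$ symmetric and bounded — the boundedness is what lets the monotonicity-in-$x$ (hence the sign control needed to combine $g$ and $a$ correctly along convex combinations) be used inside the convexity estimate. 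This is the truncated counterpart of Proposition~\ref{ifnondecresing}/its Appendix~B proof, and I would isolate it as a lemma before running the induction.
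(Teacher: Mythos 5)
There is a genuine gap, and it sits exactly at the point where the asymmetry of the hypotheses matters. Your chain ends with
\[
\EE\big[\widetilde Q_{m+1}^{\,h}(\varphi)(\widetilde X_m,\widetilde\nu_m)\big]\;\le\;\EE\big[\widetilde Q_{m+1}^{\,h}(\varphi)(\widetilde Y_m,\widetilde\nu_m)\big],
\]
justified by applying $\widetilde X_m\mconright\widetilde Y_m$ to the test function $x\mapsto\widetilde Q_{m+1}^{\,h}(\varphi)(x,\widetilde\nu_m)$, and your intermediate step uses ``monotonicity of $\beta,|\theta|$ in the measure (Assumption (II)-(2))''. But Assumption (II) imposes convexity in $x$ and monotonicity in $\mu$ \emph{only} on $b$ and $|\sigma|$ --- that is the whole point of the result (see the abstract and (II)-(1),(2)); nothing of the sort is assumed for $\beta$ and $\theta$, which are only required to dominate $b$ and $|\sigma|$ pointwise. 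Consequently you have no grounds to claim that the $(\beta,\theta)$-based operator $\widetilde Q_{m+1}^{\,h}$ maps non-decreasing convex functions to non-decreasing convex functions (convexity of $\beta(t,\cdot,\mu)$ and $|\theta(t,\cdot,\mu)|$ is simply not assumed), nor that $\widetilde Q_{m+1}^{\,h}(f)(x,\widetilde\mu_m)\le \widetilde Q_{m+1}^{\,h}(f)(x,\widetilde\nu_m)$. Even the monotonicity-in-$x$ of $\widetilde Q_{m+1}^{\,h}(f)$ is not covered by the stated constraints, since the truncation level in \eqref{truncated} is calibrated on $[\sigma]_{\mathrm{Lip}_x}$ and the step condition on $[b]_{\mathrm{Lip}_x}$, not on $[\theta]_{\mathrm{Lip}_x}$ and $[\beta]_{\mathrm{Lip}_x}$. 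So the last two of your three comparisons rely on structure that the hypotheses do not provide.

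The fix is a different ordering of the interpolation, which is what the paper does: starting from $\EE\varphi(\widetilde X_{m+1})=\int\widetilde\mu_m(dx)\,\EE\varphi\big(x+h\,b(t_m,x,\widetilde\mu_m)+\sqrt h\,|\sigma(t_m,x,\widetilde\mu_m)|Z^h_{m+1}\big)$, one first replaces $\widetilde\mu_m$ by $\widetilde\nu_m$ \emph{inside the coefficients} $b,|\sigma|$ (Assumption (II)-(2) plus Lemma~\ref{conZtrun}-$(b)$, using the inductive hypothesis $\widetilde\mu_m\mconright\widetilde\nu_m$); then one changes the \emph{integrating} measure from $\widetilde\mu_m$ to $\widetilde\nu_m$, which only requires the non-decreasing convexity in $x$ of the map built from $b$ and $|\sigma|$ (your Step~1, i.e.\ Lemma~\ref{propamonoconv}, where (II)-(1) is available); and only at the very end does one replace $(b,|\sigma|)$ by $(\beta,|\theta|)$ pointwise via Assumption (II)-(3), the monotonicity of $\varphi$ and Lemma~\ref{conZtrun}-$(b)$, before undoing the absolute value by symmetry of $Z^h_{m+1}$. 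In this order the inductive hypothesis is only ever tested against functions built from $(b,\sigma)$, so no convexity or measure-monotonicity of $(\beta,\theta)$ is needed. Your Step~1 (the truncated analogue of Proposition~\ref{ifnondecresing} without monotonicity of $|\sigma|$) and your reduction to linear-growth test functions via Lemma~\ref{onlylineargrowth} are correct and match the paper; it is the chaining order that must be changed, and as written your argument proves the proposition only under additional, unassumed hypotheses on $\beta$ and $\theta$.
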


\vspace{-0.5cm}
Before we prove Proposition~\ref{monconEuler}, we first introduce  the following lemma, which is a direct application of~\cite[Lemma 3.2-(iii)]{jourdain2019convex}. 

\begin{lem}\label{conZtrun}
$(a)$ Let $Z: (\Omega, \mathcal{A}, \mathbb{P})\rightarrow \big(\RR, Bor(\RR)\big)$ be a symmetric integrable random variable (i.e. $Z$ has the same distribution of $-Z$). Let $h>0$ 
and let $Z^{h}\coloneqq T^{h}(Z)$. 
 If $\,0\leq u_1\leq u_2$, then $u_1 Z^{h}\conright u_2 Z^{h}$ in the sense that for every convex function $f: \RR\rightarrow\RR$, we have $\EE f(u_1 Z^{h})\leq \EE f(u_2 Z^{h})$.

\noindent $(b)$ Let $Z: (\Omega, \mathcal{A}, \mathbb{P})\rightarrow \big(\RR, Bor(\RR)\big)$ be a symmetric integrable random variable such that $\mathbb{E}\,e^{c\left|Z\right|}<+\infty$ for some $c>0$ and let $Z^{h}\coloneqq T^{h}(Z)$. 
Let $f: \RR\rightarrow\RR$ be a convex function. For every fixed $(t, x, \mu)\in [0,T]\times\RR\times\mathcal{P}_{1}(\RR)$ the function 
\[u\:\longmapsto \: \EE f(x + h\cdot b(t, x, \mu)+u Z^{h})\]
is non-decreasing and reaches its minimum in 0. 
\end{lem}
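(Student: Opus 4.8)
The plan is to prove Lemma~\ref{conZtrun} by reducing both statements to the single structural fact recorded in~\cite[Lemma 3.2-(iii)]{jourdain2019convex}, namely that for a symmetric random variable $Z$ and the truncation $Z^h=T^h(Z)$, the map $u\mapsto \EE f(a+uZ^h)$ is non-decreasing on $[0,\infty)$ for any convex $f$ and any fixed $a\in\RR$, and even, so that its minimum over $\RR$ is attained at $u=0$. The first thing to check is that this last fact is genuinely applicable here: I would verify that $Z_m^h$ (hence each $Z^h$ in the statement) is symmetric, which is immediate since $Z$ is symmetric and $T^h$ is an odd function, so $T^h(-Z)=-T^h(Z)\overset{d}{=}T^h(Z)$. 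I would also note that the truncation keeps $Z^h$ bounded, hence in every $L^q$, so that all expectations below are finite whenever $f$ has sub-exponential (indeed here polynomial, via $\EE e^{c|Z|}<\infty$) growth; this is what makes the dominated-convergence/continuity arguments legitimate.

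For part $(a)$, fix a convex $f$ and $0\le u_1\le u_2$. Apply the cited evenness/monotonicity fact with $a=0$: the function $g(u):=\EE f(uZ^h)$ is non-decreasing on $[0,\infty)$, hence $g(u_1)\le g(u_2)$, which is exactly $\EE f(u_1Z^h)\le \EE f(u_2Z^h)$, i.e. $u_1Z^h\conright u_2Z^h$. The only subtlety is that one must know $u\mapsto \EE f(uZ^h)$ is finite and that the monotonicity statement from~\cite{jourdain2019convex} is stated for the symmetric truncated variable; since $Z^h$ is symmetric and bounded, both are fine. If one prefers a self-contained argument, the monotonicity of $g$ on $[0,\infty)$ can be seen by writing, for $0\le u_1\le u_2$, $u_1Z^h=\EE\big[u_2 \varepsilon Z^h \mid Z^h\big]$ where $\varepsilon$ is chosen appropriately — but it is cleaner simply to invoke the quoted lemma.

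For part $(b)$, set $a:=x+h\cdot b(t,x,\mu)$, which is a fixed real number once $(t,x,\mu)$ is fixed. Then $u\mapsto \EE f(a+uZ^h)$ is precisely the map handled by~\cite[Lemma 3.2-(iii)]{jourdain2019convex} with this choice of $a$: it is even in $u$ (because $Z^h$ is symmetric, $\EE f(a+uZ^h)=\EE f(a+u(-Z^h))=\EE f(a-uZ^h)$) and non-decreasing on $[0,\infty)$, so it attains its global minimum over $\RR$ at $u=0$, with value $f(a)$. The extra moment hypothesis $\EE e^{c|Z|}<\infty$ is only needed to guarantee that $\EE f(a+uZ^h)$ is finite for all $u$ (so the statement ``reaches its minimum'' is not vacuous); since $Z^h$ is bounded this is automatic, but stating it this way keeps the lemma uniform with how it will be used later when $f$ is one of the sub-exponential test functions.

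The main obstacle, such as it is, is not in the probability but in bookkeeping: one must make sure the version of the symmetry/monotonicity fact being quoted from~\cite{jourdain2019convex} matches the precise form needed (fixed shift $a$, symmetric truncated variable, convex but possibly non-smooth $f$) and that the convexity of $f$ — not non-decreasingness — is all that is used, since in the applications $f$ will sometimes only be convex. After that, parts $(a)$ and $(b)$ are both one-line specializations (shift $a=0$ versus $a=x+hb(t,x,\mu)$) of the same statement, together with the trivial remark that oddness of $T^h$ transfers symmetry from $Z$ to $Z^h$.
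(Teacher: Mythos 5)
Your proposal is correct: both parts do follow once one knows that for the bounded symmetric variable $Z^h$ and convex $f$ the map $u\mapsto \EE f(a+uZ^h)$ is even, convex and minimal at $u=0$, and your reduction of $(b)$ to the shifted convex function $z\mapsto f\bigl(x+h\,b(t,x,\mu)+z\bigr)$ is exactly how the paper passes from $(a)$ to $(b)$. The difference is in how that core fact is justified: you delegate it to the cited result of Jourdain--Pag\`es (which is legitimate, since the paper itself introduces the lemma as a direct application of it), and you only sketch a self-contained alternative via a conditional-expectation representation $u_1Z^h=\EE[u_2\varepsilon Z^h\mid Z^h]$ with a sign variable $\varepsilon$ independent of $Z^h$ satisfying $\EE\varepsilon=u_1/u_2$, which does work by conditional Jensen but is left with ``$\varepsilon$ chosen appropriately''. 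The paper instead gives a two-line direct argument: $Z^h$ is bounded and symmetric, hence centered; therefore $u\mapsto\EE f(uZ^h)$ is convex and, by (unconditional) Jensen, bounded below by $f(0)$, so it attains its minimum at $u=0$ and is non-decreasing on $\RR_+$. The paper's route buys self-containedness at essentially no cost; your route makes the logical dependence on the reference explicit and adds the useful observations that $T^h$ is odd (so symmetry transfers to $Z^h$) and that the exponential-moment hypothesis in $(b)$ is only there for integrability bookkeeping, neither of which is spelled out in the paper. No gap in either part.
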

\begin{proof} (see~\cite{pages2016convex}) We only prove the part $(a)$ since the part $(b)$ is a direct application of $(a)$.
The random variable $Z^{h}$ is centered since it is bounded with a symmetric distribution. Then, if $f$ is convex, the function $u\mapsto \EE f(u Z^{h})$ is clearly convex and attains its minimum at $u=0$ by Jensen's inequality, so it is non-decreasing on $\RR_{+}$.
\end{proof}


The proof of Proposition~\ref{monconEuler} relies on the propagation of monotonicity and convexity by the operator $\widetilde{P}_{m}^{\,h}$ defined in~(\ref{defP}), which is established in the following lemma. Remark that in this  lemma, our discussion is based on a generalized symmetric random variable $Z$ satisfying $\EE e^{c\left|Z\right|}<+\infty$ for some $c>0$, instead of the Gaussian white noise $Z_m$ defined in~(\ref{Eulerclass}).  

\begin{lem}
\label{propamonoconv}
Let $h\in (0, \frac{1}{2[b]_{\mathrm{Lip}_{x}}})$. 
Let $Z: (\Omega, \mathcal{A}, \mathbb{P})\rightarrow \big(\RR, Bor(\RR)\big)$ be a symmetric random variable such that $\mathbb{E}\,e^{c\left|Z\right|}<+\infty$ and let $Z^{h}\coloneqq T^{h}(Z)$. \textcolor{black}{Assume conditions in Assumption (I) and (II)-(1),(2) are in force}. 
\begin{enumerate}[$(a)$]
\item Let $f:\RR\rightarrow\RR$ be a non-decreasing function with sub-exponential growth.  Then, for every $m=1,\ldots, M$ and for every fixed $\mu\in\mathcal{P}_{1}(\RR)$, the function 
\[x\longmapsto \mathbb{E}\, f\big(\EMH(x, \mu, Z^{h})\big)\]
is non-decreasing.
\item Let $f:\RR\rightarrow\RR$ be a non-decreasing convex function with sub-exponential growth. 
Then for every $m=1,\ldots, M$ and for every fixed $\mu\in\mathcal{P}_{1}(\RR)$, the function 
\[x\longmapsto \mathbb{E}\, f\big(\EMH(x, \mu, Z^{h})\big)\]
is non-decreasing and convex.
\end{enumerate}
\end{lem}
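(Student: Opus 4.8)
\textbf{Proof plan for Lemma~\ref{propamonoconv}.}
The plan is to prove (a) and (b) together by induction, treating the one-step operator $\widetilde P_{m+1}^{\,h}$ directly since the statement concerns a fixed single application with $\mu$ frozen. Fix $\mu\in\mathcal{P}_1(\RR)$ and write $\phi(x)\coloneqq x+h\,b(t_m,x,\mu)$ and $s(x)\coloneqq\sigma(t_m,x,\mu)$, so that $\mathbb{E}\,f\big(\EMH(x,\mu,Z^h)\big)=\mathbb{E}\,f\big(\phi(x)+s(x)Z^h\big)$. First I would record the two structural facts that make the truncation work: since $b(t_m,\cdot,\mu)$ is $[b]_{\mathrm{Lip}_x}$-Lipschitz and $h<\frac1{2[b]_{\mathrm{Lip}_x}}$, the map $\phi$ is strictly increasing (indeed $\phi(x)-\phi(y)\ge (1-h[b]_{\mathrm{Lip}_x})(x-y)\ge\frac12(x-y)$ for $x\ge y$), and since $|Z^h|\le \frac{1}{2\sqrt h\,[\sigma]_{\mathrm{Lip}_x}}$ almost surely while $x\mapsto\sqrt h\,s(x)Z^h$ is $\sqrt h\,[\sigma]_{\mathrm{Lip}_x}|Z^h|\le\frac12$-Lipschitz on the event fixing $Z^h=z$, the map $x\mapsto \phi(x)+\sqrt h\,s(x)z$ is nondecreasing for \emph{every} fixed value $z$ in the truncation range. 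This is precisely the point of choosing the truncation threshold $\frac{1}{2\sqrt h[\sigma]_{\mathrm{Lip}_x}}$.

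For part (a): fix $z$ in the support of $Z^h$; by the Lipschitz bound just described, $x\mapsto\phi(x)+\sqrt h\,s(x)z$ is nondecreasing, hence $x\mapsto f\big(\phi(x)+\sqrt h\,s(x)z\big)$ is nondecreasing because $f$ is nondecreasing; integrating in $z$ against the law of $Z^h$ preserves monotonicity. The sub-exponential growth of $f$ together with $\mathbb{E}\,e^{c|Z|}<\infty$ and the linear growth of $b,\sigma$ in $x$ guarantees all the integrals are finite, so Fubini/dominated convergence is legitimate; I would note this once and not belabor it.

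For part (b) the additional ingredient is convexity. Write $g(x,z)\coloneqq f\big(\phi(x)+s(x)z\big)$; it suffices to show $x\mapsto\mathbb{E}\,g(x,Z^h)$ is convex, and for monotonicity we reuse part (a). The obstacle is that $s(x)=\sigma(t_m,x,\mu)$ need not be signed — only $|\sigma(t_m,\cdot,\mu)|$ is convex — so $g(x,z)$ is \emph{not} jointly convex in general. The device is to exploit the symmetry of $Z^h$: since $Z^h\overset{d}{=}-Z^h$, we have $\mathbb{E}\,g(x,Z^h)=\mathbb{E}\,\tfrac12\big[f(\phi(x)+s(x)Z^h)+f(\phi(x)-s(x)Z^h)\big]$, so I may replace $s(x)$ by $|s(x)|$ at the cost of nothing; concretely, writing $h_z(x)\coloneqq\tfrac12\big(f(\phi(x)+|s(x)|z)+f(\phi(x)-|s(x)|z)\big)$ for $z\ge0$, it is enough to show each $h_z$ is convex and then average over $z=|Z^h|\ge0$. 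Now $\phi$ is convex (affine plus $h\cdot b(t_m,\cdot,\mu)$, and $b(t_m,\cdot,\mu)$ is convex by Assumption (II)-(1)) and nondecreasing (by the Lipschitz argument above), $|s|=|\sigma(t_m,\cdot,\mu)|$ is convex and nonnegative, so for fixed $z\ge0$ the map $x\mapsto \phi(x)+|s(x)|z$ is convex and $x\mapsto\phi(x)-|s(x)|z$ is concave; one then checks that $x\mapsto f(\mathrm{convex})$ is convex since $f$ is nondecreasing and convex, and $x\mapsto f(\mathrm{concave})$ requires more care — here is where I would invoke the precise mechanism already used in~\cite{pages2016convex} and in Lemma~\ref{conZtrun}: using $f$ nondecreasing convex, the function $u\mapsto \mathbb{E}\,f(A+uZ^h)$ is nondecreasing in $u\ge0$ for any constant $A$, and more refined, the composition $x\mapsto \tfrac12[f(\phi(x)+|s(x)|z)+f(\phi(x)-|s(x)|z)]=\mathbb{E}_\varepsilon f(\phi(x)+\varepsilon|s(x)|z)$ with $\varepsilon$ a Rademacher sign is convex in $x$ by the standard lemma that if $\psi$ is convex nondecreasing, $\ell$ is convex, and $r\ge0$ is convex, then $x\mapsto \mathbb{E}_\varepsilon \psi(\ell(x)+\varepsilon r(x))$ is convex — this is exactly the computation behind~\cite[Lemma 2.3]{liu2020functional} and I would cite it rather than redo the second-difference estimate.

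Thus the proof reduces to: (i) the Lipschitz/truncation observation giving monotonicity of the inner map pathwise; (ii) the symmetrization $s\rightsquigarrow|s|$; (iii) the cited convexity-propagation lemma for $x\mapsto\mathbb{E}_\varepsilon\psi(\ell(x)+\varepsilon r(x))$; and (iv) routine integrability from sub-exponential growth and $\mathbb{E}\,e^{c|Z|}<\infty$. The only genuinely delicate point is (iii) — that averaging $f$ over $\pm|s(x)|z$ restores convexity despite $x\mapsto\phi(x)-|s(x)|z$ being concave — and I expect to handle it by quoting the corresponding step in~\cite{liu2020functional} and~\cite{pages2016convex} almost verbatim, since the truncation has been arranged precisely so that the drift term $\phi$ stays monotone and the argument there applies unchanged.
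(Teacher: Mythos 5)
Your proposal is correct in substance, but it is not quite the paper's proof, and the differences are worth spelling out. For part $(a)$ the paper proceeds by a first-order Taylor expansion for $C^1$ test functions, using $f'\ge 0$ together with the pathwise bound $1-h[b]_{\mathrm{Lip}_x}-\sqrt{h}\,[\sigma]_{\mathrm{Lip}_x}|Z^h|\ge \tfrac12-\sqrt h[\sigma]_{\mathrm{Lip}_x}|Z^h|\ge 0$, and then removes the smoothness by Gaussian regularization ($f_\varepsilon(x)=\EE f(x+\sqrt\varepsilon\zeta)$) and dominated convergence; your observation that the truncation makes the inner map $x\mapsto x+h\,b_m(x)+\sqrt h\,\sigma_m(x)z$ nondecreasing for \emph{every} fixed $z$ in the truncation range lets you conclude pathwise for an arbitrary nondecreasing $f$, which is a genuinely shorter route that dispenses with the regularization step entirely (the same inequality is the engine in both cases). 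For part $(b)$ your argument is, after unpacking, the same mechanism as the paper's: the paper runs the chain of inequalities directly --- symmetry of $Z^h$ to pass to $|\sigma|$, convexity of $b$ plus monotonicity of $f$, convexity of $|\sigma|$ plus Lemma~\ref{conZtrun}-$(b)$, then convexity of $f$, and symmetry again --- which is precisely an in-line verification of the composition principle you quote (that $H(y,u)=\tfrac12\EE[f(y+uZ^h)+f(y-uZ^h)]$ is jointly convex, nondecreasing in $y$ because $f$ is nondecreasing, nondecreasing in $u\ge0$ by symmetry, hence its composition with the convex pair $(\phi,|s|)$ is convex). Two small inaccuracies in your write-up should be fixed: the claim that $x\mapsto \phi(x)-|s(x)|z$ is concave is false here, since $\phi$ is convex rather than affine (it is a difference of convex functions); your final mechanism does not use it, but it should be deleted. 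Likewise, citing the convexity-propagation lemma of \cite{liu2020functional} ``almost verbatim'' is not quite legitimate, because there the drift is affine and no monotonicity of the test function is assumed, whereas with a convex drift the monotonicity of $f$ (equivalently of $H$ in $y$) is indispensable; since you state and can prove the correct generalized composition lemma yourself, this is a presentational point rather than a gap, but the proof should contain that short verification rather than a bare citation.
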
 

\begin{proof}
$(a)$ As $\mu\in\mathcal{P}_{1}(\RR)$ is fixed, to alleviate notations, we denote $b_m(x)= b(t_m,x,\mu)$ and $\sigma_m(x)= \sigma(t_m,x,\mu)$, $0\leq m\leq M$. By Assumption (I), $b_m(x)$ and $\sigma_m(x)$ are Lipschitz continuous functions with respective Lipschitz constants $[b_m]_{\Lip}$ and $[\sigma_m]_{\Lip}$ satisfying $[b_m]_{\Lip}\leq L$ and $[\sigma_m]_{\Lip}\leq L$. 


\smallskip
\noindent {\sc Step~1} ($f$ {\em smooth}). Assume $f$ is also $C^1$.  
Then $f'\ge 0$ as $f$ is non-decreasing. Let $x$, $y\!\in \RR$, $x> y$. A first order Taylor expansion yields
\begin{align}
&\mathbb{E}\, f\big(\EMH(x, \mu, Z^{h})\big)-\mathbb{E}\, f\big(\EMH(y, \mu, Z^{h})\big)\nonumber\\
&=\EE\Big[\int_0^1f'\big(u\, {\cal E}_m^{h}(x,\mu,Z^h)+(1-u){\cal E}_m^{h}(y,\mu,Z^h)\big)du\nonumber\\
&\hspace{4cm}\cdot\Big( x-y +h\big(b_m(x)-b_m(y)\big) +\sqrt{h}\big( \sigma_m(x)-\sigma_m(y)\big)Z^h \Big) \Big].\nonumber
\end{align}
Moreover, 
\begin{align*}
x-y& +h\big(b_m(x)-b_m(y)\big) +\sqrt{h}\big( \sigma_m(x)-\sigma_m(y)\big)Z^h\\
&=(x-y)\Big(1+h \,\frac{b_m(x)-b_m(y)}{x-y}+\sqrt{h}\, \frac{\sigma_m(x)-\sigma_m(y)}{x-y}Z^h\Big)\\
& \ge (x-y)\Big(1-h[b_m]_{\text{Lip}}-\sqrt{h}\, [\sigma_{m}]_{\text{Lip}} |Z^h|\Big)\\
&\ge  (x-y)\Big(\tfrac12-\sqrt{h}\, [\sigma_{m}]_{\text{Lip}} |Z^h|\Big)\ge 0
\end{align*}
owing to the definition of $Z^{h}$. 
As $f$ is non-decreasing,   $f'\geq0$. Hence $\mathbb{E}\, f\big(\EMH(x, \mu, Z^{h})\big)-\mathbb{E}\, f\big(\EMH(y, \mu, Z^{h})\big) \ge 0$.

\smallskip
\noindent {\sc Step~2} ({\em Regularization}.) Assume now $f$ is simply non-decreasing (but still has a sub-exponential growth of the form $|f(x)|\le Ke^{\kappa|x|}$). Let $\zeta$ be a random variable having probability distribution $\mathcal{N}(0, 1)$ independent of $Z$. 
For every $\varepsilon>0$, let $f_{\varepsilon}(x)= \EE\, f(x+\sqrt{\varepsilon}\zeta)$. The functions $f_{\varepsilon}$ is well defined since \[|f_{\varepsilon}(x)| = \EE\, f(x+\sqrt{\varepsilon}\zeta)\le  K \EE\, e^{\kappa |x+\sqrt{\varepsilon}\zeta|}\le Ke^{\kappa |x|} \EE\, e^{\kappa \sqrt{\varepsilon}|\zeta|}  \le 2K e^{\frac{1}{2}\kappa^{2} \varepsilon}e^{\kappa|x|}<+\infty\]
and the functions is clearly non-decreasing. 



One checks that, under this sub-exponential growth assumption of $f$,  the function $f_{\varepsilon}$ is differentiable  with derivative $f'_{\varepsilon}(x)=\frac{1}{\sqrt{\varepsilon}} \EE\,\big( f(x+\sqrt{\varepsilon}\zeta)\zeta\big)$. 
Moreover, $\zeta$ and $Z$ being independent,
\begin{align*}
\EE f_{\varepsilon}\big(\mathcal{E}_m^h(x, \mu, Z^{h})\big)&= \EE f\big( x +h b(t_k,x,\mu) +\sqrt{h}\sigma(t_k,x,\mu)Z^h+\sqrt{\varepsilon}\,\zeta \big),
\end{align*}
so that if $\varepsilon \!\in (0, \varepsilon_0]$ for some fixed $\varepsilon_0>0$,
\[
|f \big( x +h b(t_k,x,\mu) +\sqrt{h}\sigma(t_k,x,\mu)Z^h+\sqrt{\varepsilon}\zeta \big)|\le C_{b,\sigma,h,x}e^{c(\sqrt{h}|\sigma(t_k,x,\mu)||Z| +\varepsilon_0 |\zeta|)}\!\in L^1(\mathbb{P}).
\]
It follows from Lebesgue's dominated convergence theorem that  $\EE f_{\varepsilon}\big(\mathcal{E}_m^h(x, \mu, Z^{h})\big)$ converges to $\EE f\big(\mathcal{E}_m^h(x, \mu, Z^{h})\big)$. Now the function $x\mapsto \EE f_{\varepsilon}\big(\mathcal{E}_m^h(x, \mu, Z^{h})\big)$ is non-decreasing owing to Step~1 which in turn implies that so is $x\mapsto \EE f\big(\mathcal{E}_m^h(x, \mu, Z^{h})\big)$.

\noindent $(b)$ By applying $(a)$, the function $x\longmapsto \mathbb{E}\, f\big(\EMH(x, \mu, Z^{h})\big)$ is non-decreasing. Now we prove its convexity. Let $x, y \in \RR$ and $\lambda\in[0, 1]$.
\begin{align}
&\EE f\big(\EMH (\lambda x+(1-\lambda)y, \mu, Z^h)\big)\nonumber\\
&= \EE f \big(\lambda x+(1-\lambda)y+h\cdot b(t_m, \lambda x+(1-\lambda)y, \mu)+\sqrt{h}\cdot \sigma(t_m, \lambda x+(1-\lambda)y, \mu)Z^h\big)\nonumber\\
&{\color{black}= \EE f \big(\lambda x+(1-\lambda)y+h\cdot b(t_m, \lambda x+(1-\lambda)y, \mu)+\sqrt{h}\cdot \left|\sigma(t_m, \lambda x+(1-\lambda)y, \mu)\right|Z^h\big)}\nonumber\\
&{\color{black}\qquad\text{(as the random variable $Z^{h}$ is symmetric)}}\nonumber\\
&\leq \EE f \big(\lambda x+(1-\lambda)y+\lambda h\cdot b(t_m, x, \mu)+(1-\lambda) h\cdot b(t_m, y, \mu)+\sqrt{h}\cdot {\color{black}\left|\sigma(t_m, \lambda x+(1-\lambda)y, \mu)\right|}\,Z^h\big)\nonumber\\
&\qquad\text{(by the convex assumption on $b$ and the monotonicity of $f$)}\nonumber\\
&\leq \EE f \big(\lambda x+(1-\lambda)y+\lambda h\cdot b(t_m, x, \mu)+(1-\lambda) h\cdot b(t_m, y, \mu)+{\color{black}\lambda\sqrt{h}\cdot \left|\sigma(t_m,  x, \mu)\right|Z^h }\nonumber\\
&\qquad{+\color{black} (1-\lambda)\sqrt{h}\cdot \left|\sigma(t_m, y, \mu)\right| Z^h}\big) \quad\text{(by the convex assumption on $\left|\sigma\right|$ and Lemma~\ref{conZtrun}-$(b)$)}\nonumber\\
&\leq \lambda\EE f \big( x+ h\cdot b(t_m, x, \mu){+\color{black}\lambda\sqrt{h}\cdot \left|\sigma(t_m,  x, \mu)\right|Z^h \big)+(1-\lambda)\EE f \big( y+ h\cdot b(t_m, y, \mu)+ \sqrt{h}\cdot \left|\sigma(t_m, y, \mu)\right| Z^h}\big) \nonumber\\
&\qquad\text{(by the convexity of $f$)}\nonumber\\
&\leq \lambda \EE f\big(\EMH(x, \mu, Z^{h})\big)+(1-\lambda)\EE f\big(\EMH(y, \mu, Z^{h})\big) \qquad\text{(by the symmetry of $Z^{h}$)}.\hfill\qedhere\nonumber
\end{align}
\end{proof}

For every $m=0,\ldots, M$, let $\mathcal{F}_{m}$ denote the $\sigma$-algebra generated by $(X_0, Y_0, Z_1,\ldots, Z_m)$. Now we prove Proposition~\ref{monconEuler}.

\begin{proof}[Proof of Proposition~\ref{monconEuler}]$(a)$ \textcolor{black}{We proceed by induction.}
Assumption (II)-(4) directly implies $\widetilde{X}_{0}\mconright \widetilde{Y}_{0}$.  
Assume now that $\widetilde{X}_{m}\mconright \widetilde{Y}_{m}$.  \textcolor{black}{Let $\varphi: \RR \to \mathbb{R}$ be a non-decreasing convex function with linear growth}.
\begin{align}
\EE \big[\varphi(\widetilde{X}_{m+1})\big] &= \EE\Big[ \varphi\big( \widetilde{X}_{m}+h\cdot b(t_{m}, \widetilde{X}_{m}, \widetilde{\mu}_{m})+\sqrt{h}\cdot \sigma(t_{m}, \widetilde{X}_{m}, \widetilde{\mu}_{m})Z^{h}_{m+1}\big)\Big]\nonumber\\
&= \EE \Big[\EE \big[\varphi\big( \widetilde{X}_{m}+h\cdot b(t_{m}, \widetilde{X}_{m}, \widetilde{\mu}_{m})+\sqrt{h}\cdot \sigma(t_{m}, \widetilde{X}_{m}, \widetilde{\mu}_{m})Z^{h}_{m+1}\big)\mid \mathcal{F}_{m}   \big]\Big]\nonumber\\
&=\int_{\mathbb{R}}\widetilde{\mu}_{m}(dx)\,\EE \Big[ \varphi \big( x+h\cdot b(t_{m}, x,\, \widetilde{\mu}_{m})+\sqrt{h}\cdot \sigma(t_{m}, x, \, \widetilde{\mu}_{m})Z^{h}_{m+1}\big)\Big]\nonumber\\
&\hspace{0.5cm}\text{(the integrability is due to \eqref{polymot} as $\varphi$ has  linear growth)}\nonumber\\
&{\color{black}=\int_{\mathbb{R}}\widetilde{\mu}_{m}(dx)\,\EE \Big[ \varphi \big( x+h\cdot b(t_{m}, x,\, \widetilde{\mu}_{m})+\sqrt{h}\cdot \left|\sigma(t_{m}, x, \, \widetilde{\mu}_{m})\right|Z^{h}_{m+1}\big)\Big]}\nonumber\\
&\leq \int_{\mathbb{R}^{d}}\widetilde{\mu}_{m}(dx)\,\EE \Big[ \varphi \big( x+h\cdot b(t_{m}, x,\, \widetilde{\nu}_{m})+\sqrt{h}\cdot {\color{black}\left|\sigma(t_{m}, x, \, \widetilde{\nu}_{m})\right|}Z^{h}_{m+1}\big)\Big]\nonumber\\
&\hspace{0.5cm}(\text{by Assumption (II)-(2) and Lemma~\ref{conZtrun}-$(b)$})\nonumber\\
&\leq \int_{\mathbb{R}^{d}}\widetilde{\nu}_{m}(dx)\,\EE \Big[ \varphi \big( x+h\cdot b(t_{m}, x,\, \widetilde{\nu}_{m})+\sqrt{h}\cdot {\color{black}\left|\sigma(t_{m}, x, \, \widetilde{\nu}_{m})\right|}Z^{h}_{m+1}\big)\Big]\nonumber\\
&\hspace{0.5cm}(\text{by Lemma~\ref{propamonoconv}})\nonumber\\
&\leq \int_{\mathbb{R}^{d}}\widetilde{\nu}_{m}(dx)\,\EE \Big[ \varphi \big( x+h\cdot \beta(t_{m}, x,\, \widetilde{\nu}_{m})+\sqrt{h}\cdot {\color{black}\left|\theta(t_{m}, x, \, \widetilde{\nu}_{m})\right| }Z^{h}_{m+1}\big)\Big]\nonumber\\
&\hspace{0.5cm}(\text{by Assumption (II)-(3) and Lemma~\ref{conZtrun}-$(b)$})\nonumber\\
&= \int_{\mathbb{R}^{d}}\widetilde{\nu}_{m}(dx)\,\EE \Big[ \varphi \big( x+h\cdot \beta(t_{m}, x,\, \widetilde{\nu}_{m})+\sqrt{h}\cdot {\color{black}\theta(t_{m}, x, \, \widetilde{\nu}_{m})}Z^{h}_{m+1}\big)\Big]\nonumber\\
&=\EE [\varphi(\widetilde{Y}_{m+1})]. \label{reverse}
\end{align}
Thus $\widetilde{X}_{m+1}\mconright \widetilde{Y}_{m+1}$ by applying Lemma~\ref{onlylineargrowth} and one concludes by a forward induction. 

\noindent\textcolor{black}{ $(b)$ The proof of $(b)$ is very similar to $(a)$. First, Assumption (II)- $(4')$ directly implies $\widetilde{Y}_{0} \mconright\widetilde{X}_{0}$. Then the rest part of the induction is simply a reversion of the three inequalities in (\ref{reverse}).  }
\end{proof}


\subsection{Functional monotone convex order for the truncated Euler scheme}\label{funcmcvEuler}
This section is devoted to the proof of Proposition~\ref{funmonoconvexEuler}. 
For any $K\in\mathbb{N}^{*}$, we consider the norm on $\mathbb{R}^{K}$ defined by $\vertii{x}\coloneqq \sup_{1\leq i \leq K}\left|x_{i}\right|$ for every $x=(x_{1},\ldots, x_{K})\in\mathbb{R}^{K}$. 
For any $m_{1}, m_{2}\in\mathbb{N}^{*}$ with $m_{1}\leq m_{2}$, we denote $x_{m_1:m_{2}}\coloneqq (x_{m_{1}}, x_{m_{1}+1}, \ldots, x_{m_{2}})\in\RR^{m_{2}-m_{1}+1}$. Similarly, we denote $\mu_{m_{1}: \;m_{2}}\coloneqq(\mu_{m_{1}}, \ldots, \mu_{m_{2}})\in\big(\mathcal{P}_{1}(\RR)\big)^{m_{2}-m_{1}+1}$.
We recursively define a sequence of functions \[\Phi_{m}: \RR^{m+1}\times \big(\mathcal{P}_{1}(\RR)\big)^{M-m+1}\rightarrow \RR, \quad m=0, \ldots, M\] 
in a backward way as follows: 

$\blacktriangleright\quad$Set 
\begin{equation}\label{defPhi1}
\Phi_{M}(x_{0: M}, \mu_{M})\coloneqq F(x_{0}, \ldots, x_{M})
\end{equation} 
with the same $F$ as in Proposition~\ref{funmonoconvexEuler}. 

$\blacktriangleright\quad$For $m=0, \ldots, M-1$, set
\begin{align}
&\Phi_{m}(x_{0: m}, \mu_{m: M}) \coloneqq\big(\widetilde{P}^{\,h}_{m+1}\Phi_{m+1}(x_{0: m}, \,\cdot\,, \mu_{m+1: M})\big)(x_{m}, \mu_{m})\nonumber\\
&\hspace{0.7cm}=\EE \Big[\Phi_{m+1}\big(x_{0:m}, \mathcal{E}_{m}^{h}(x_{m}, \mu_{m}, Z_{m+1}^{h}), \mu_{m+1:M}\big)\Big]&\nonumber\\
\label{defPhi2}
&\hspace{0.7cm}=\EE \Big[\Phi_{m+1}\big(x_{0: m}, x_m+h\cdot b(t_m, x_m, \mu_m)+\sqrt{h}\cdot \sigma(t_m, x_m, \mu_m)Z^{h}_{m+1}, \mu_{m+1: M}\big)\Big].&
\end{align}
The functions  $\Phi_{m}, m=0, \ldots, M$ 
 share the following properties. 
\begin{lem}\label{propphi}
Let $h\in (0, \frac{1}{2[b]_{\mathrm{Lip}_{x}}})$.  Under Assumption (I) and (II)-(1), (2), for every $m=0, \ldots, M$,  

\vspace{-0.3cm}
\begin{enumerate}[$(i)$]
\item for a fixed $\mu_{m:M}\in\big(\mathcal{P}_{1}(\RR)\big)^{M-m+1}$, the function $\Phi_{m}(\;\cdot\;, \mu_{m: M})$ is convex and non-decreasing in $x_{0:m}$ w.r.t the partial order defined in~(\ref{partialorder}) and has an $r$-polynomial growth in $x_{0: m}$, so that $\Phi_{m}$ is well-defined. 
\item for a fixed $x_{0:m}\in\RR^{m+1}$, the function $\Phi_{m}(x_{0: m}, \;\cdot\;)$ is non-decreasing in $\mu_{m:M}$ with respect to the monotone convex order  in the sense that for any $\mu_{m:M}, \nu_{m:M}\in \big(\mathcal{P}_{1}(\RR)\big)^{M-m+1}$ such that $\mu_{i}\mconright \nu_{i}, i= m, \ldots, M$, 
\begin{align}
& \Phi_{m}(x_{0:m}, \mu_{m: M})\leq \Phi_{m}(x_{0:m}, \nu_{m: M}). 
\end{align}
\smallskip
\end{enumerate}
\end{lem}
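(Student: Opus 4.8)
The plan is to prove the two assertions of Lemma~\ref{propphi} simultaneously by a backward induction on $m$, starting from $m=M$ and going down to $m=0$, using the backward recursion~\eqref{defPhi1}--\eqref{defPhi2} that defines the $\Phi_m$. The base case $m=M$ is immediate: $\Phi_M(x_{0:M},\mu_M)=F(x_0,\dots,x_M)$ does not depend on $\mu_M$ at all, so $(ii)$ is vacuous, while $(i)$ is precisely the hypothesis on $F$ (convex, non-decreasing for the partial order~\eqref{partialorder}, $r$-polynomial growth with $1\le r\le p$). The integrability needed for $\Phi_{M-1}$ to be well-defined comes from the $r$-polynomial growth of $F$ combined with the finite $p$-th moment bound~\eqref{polymot} for the truncated Euler scheme (here one only needs the deterministic increment $x+h\,b(t_m,x,\mu)+\sqrt h\,\sigma(t_m,x,\mu)Z^h_{m+1}$, whose $L^p$-norm is controlled since $b,\sigma$ have linear growth in $x$ and $Z^h_{m+1}$ is bounded, hence in every $L^q$).

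For the induction step, assume $\Phi_{m+1}$ satisfies $(i)$ and $(ii)$; I must show the same for $\Phi_m$ defined by~\eqref{defPhi2}. The $r$-polynomial growth and well-definedness of $\Phi_m$ follow from the remark after~\eqref{defP}: the transition operator $\widetilde P^{\,h}_{m+1}$ preserves $r$-polynomial growth in the space variable, and the extra coordinates $x_{0:m-1}$ and the frozen tail $\mu_{m+1:M}$ are just parameters. For $(i)$, fix $\mu_{m:M}$; I need convexity and monotonicity of $x_{0:m}\mapsto\Phi_m(x_{0:m},\mu_{m:M})$. The dependence on $x_{0:m-1}$ is inherited directly and linearly through $\Phi_{m+1}$ (it is only carried along), so convexity and monotonicity in those coordinates are clear. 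The real content is the dependence on $x_m$: one composes the map $x_m\mapsto\bigl(x_{0:m-1},\,\mathcal E^h_m(x_m,\mu_m,Z^h_{m+1})\bigr)$ with $\Phi_{m+1}$ and takes expectation. Monotonicity in $x_m$ is exactly Lemma~\ref{propamonoconv}-$(a)$ applied to the non-decreasing (in its last argument) function $\xi\mapsto\Phi_{m+1}(x_{0:m-1},\xi,\mu_{m+1:M})$; convexity in $x_m$ is exactly Lemma~\ref{propamonoconv}-$(b)$ applied to the non-decreasing convex function $\xi\mapsto\Phi_{m+1}(x_{0:m-1},\xi,\mu_{m+1:M})$, which is legitimate by the induction hypothesis $(i)$ (that inner function has sub-exponential — indeed polynomial — growth and is non-decreasing convex, and the step-size restriction $h<\frac1{2[b]_{\mathrm{Lip}_x}}$ is in force). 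One should also check joint convexity in $(x_{0:m-1},x_m)$, but since $\Phi_{m+1}$ is jointly convex in all its space arguments and the inner map $x_m\mapsto\mathcal E^h_m(x_m,\mu_m,Z^h_{m+1})$ is affine only after replacing $\sigma$ by $|\sigma|$ and symmetrizing $Z^h$, the joint convexity is obtained by the same telescoping chain of inequalities as in the proof of Lemma~\ref{propamonoconv}-$(b)$ (convexity of $b$, convexity of $|\sigma|$ together with Lemma~\ref{conZtrun}-$(b)$, then convexity of $\Phi_{m+1}$), applied to a convex combination in the full vector $x_{0:m}$ and then taking expectation; this does not raise a new difficulty beyond bookkeeping.

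For $(ii)$, fix $x_{0:m}$ and take $\mu_{m:M}\mconright\nu_{m:M}$ componentwise. Write $\Phi_m(x_{0:m},\mu_{m:M})=\EE\,\varphi_{\mu_{m+1:M}}\bigl(\mathcal E^h_m(x_m,\mu_m,Z^h_{m+1})\bigr)$ where $\varphi_{\mu_{m+1:M}}(\xi):=\Phi_{m+1}(x_{0:m-1},\xi,\mu_{m+1:M})$. I would move from $\mu$ to $\nu$ in two stages, mirroring the chain~\eqref{reverse} in the proof of Proposition~\ref{monconEuler}. First change the tail: by the induction hypothesis $(ii)$ applied to $\Phi_{m+1}$, replacing $\mu_{m+1:M}$ by $\nu_{m+1:M}$ (which dominates it componentwise in $\mconright$) only increases $\varphi$ pointwise, hence increases the expectation. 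Second change the coefficient argument: $b(t_m,x_m,\mu_m)\le b(t_m,x_m,\nu_m)$ and $|\sigma(t_m,x_m,\mu_m)|\le|\sigma(t_m,x_m,\nu_m)|$ by Assumption (II)-(2) since $\mu_m\mconright\nu_m$; then, using that $\varphi_{\nu_{m+1:M}}$ is non-decreasing convex (induction hypothesis $(i)$) together with Lemma~\ref{conZtrun}-$(b)$ (which says $u\mapsto\EE\,f(x+h\,b+uZ^h)$ is non-decreasing for convex $f$) and the symmetry of $Z^h_{m+1}$, one gets $\EE\,\varphi_{\nu_{m+1:M}}(x_m+h\,b(t_m,x_m,\mu_m)+\sqrt h\,|\sigma(t_m,x_m,\mu_m)|Z^h_{m+1})\le\EE\,\varphi_{\nu_{m+1:M}}(x_m+h\,b(t_m,x_m,\nu_m)+\sqrt h\,|\sigma(t_m,x_m,\nu_m)|Z^h_{m+1})=\Phi_m(x_{0:m},\nu_{m:M})$. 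Combining the two stages gives $(ii)$ for $\Phi_m$.

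The main obstacle I anticipate is not any single deep step but making the two induction hypotheses $(i)$ and $(ii)$ interlock cleanly: the proof of $(i)$ at level $m$ needs $\Phi_{m+1}$ to be jointly convex, non-decreasing in its space arguments \emph{and} to have controlled growth, while the proof of $(ii)$ at level $m$ needs both $(i)$ and $(ii)$ at level $m+1$ (the monotonicity/convexity of $\varphi_{\nu_{m+1:M}}$ to invoke Lemma~\ref{conZtrun}-$(b)$, and the $\mconright$-monotonicity in the tail). One must therefore be careful to carry $(i)$ and $(ii)$ \emph{together} through the backward recursion rather than proving them separately, and to verify at each step that the growth estimate survives the application of $\widetilde P^{\,h}_{m+1}$ so that all the expectations written above are finite — which is exactly what the remark following~\eqref{defP} and the moment bound~\eqref{polymot} are there to guarantee.
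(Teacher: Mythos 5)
Your proposal is correct and follows essentially the same route as the paper: a backward induction carrying $(i)$ and $(ii)$ together, with convexity obtained by the telescoping chain (convexity of $b$, convexity of $\left|\sigma\right|$ with Lemma~\ref{conZtrun}-$(b)$, symmetry of $Z^h_{m+1}$, joint convexity and monotonicity of $\Phi_{m+1}$), monotonicity from Lemma~\ref{propamonoconv}, and $(ii)$ by replacing the measure in the coefficients via Assumption (II)-(2) and the tail via the induction hypothesis. The only deviations are cosmetic: you perform the two replacement stages in $(ii)$ in the opposite order from the paper (tail first, then $\mu_m$ in $b$ and $\left|\sigma\right|$), which is equally valid, and there is a harmless off-by-one in writing $x_{0:m-1}$ for the carried block, which should be $x_{0:m}$.
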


\vspace{-1.2cm}
\begin{proof} 
\noindent  $(i)$ The function $\Phi_{M}(\cdot, \mu_{M})$ is convex and non-decreasing w.r.t. the partial order~(\ref{partialorder}) in $x_{0:M}$ owing to the hypothesis on $F$. Now assume that $x_{0: m+1}\mapsto\Phi_{m+1}(x_{0:m+1}, \mu_{m+1:M})$ is convex and non-decreasing w.r.t. the partial order~(\ref{partialorder}). 

\noindent {\sc Step~1} ($\Phi_{m}$ is convex). For any $x_{0:m}, y_{0:m}\in \RR^{m+1}$ and $\lambda\in[0, 1]$, it follows that 
\begin{align}
&\Phi_{m}\big(\lambda x_{0:m}+(1-\lambda)y_{0:m}, \mu_{m:M}\big)\nonumber\\
&\hspace{0.5cm}=\EE \Phi_{m+1}\Big( \lambda x_{0:m}+(1-\lambda)y_{0:m}, \lambda x_m+(1-\lambda)y_m+h\cdot b\big(t_m, \lambda x_m+(1-\lambda)y_{m}, \mu_m\big)\nonumber\\
&\hspace{3cm}+\sqrt{h}\cdot\sigma\big(t_m, \lambda x_m+(1-\lambda)y_{m}, \mu_m\big) Z_{m+1}^{h}, 
\mu_{m+1:M}\Big)\nonumber\\
&\hspace{0.5cm}\leq\EE \Phi_{m+1}\Big( \lambda x_{0:m}+(1-\lambda)y_{0:m}, \lambda x_m+(1-\lambda)y_m+\lambda h\cdot b\big(t_m,  x_m, \mu_m\big)+(1-\lambda)h\cdot b\big(t_m,  y_{m}, \mu_m\big)\nonumber\\
&\hspace{3cm}+\sqrt{h}\cdot\sigma\big(t_m, \lambda x_m+(1-\lambda)y_{m}, \mu_m\big) Z_{m+1}^{h}, 
\mu_{m+1:M}\Big)\nonumber\\
&\hspace{1cm} \text{(by Assumption (II)-(1) and the monotonicity of $\Phi_{m+1}$)}\nonumber\\
&\hspace{0.5cm}=\EE \Phi_{m+1}\Big( \lambda x_{0:m}+(1-\lambda)y_{0:m}, \lambda x_m+(1-\lambda)y_m+\lambda h\cdot b\big(t_m,  x_m, \mu_m\big)+(1-\lambda)h\cdot b\big(t_m,  y_{m}, \mu_m\big)\nonumber\\
&\hspace{3cm}+\sqrt{h}\cdot{\color{black}\left|\sigma\big(t_m, \lambda x_m+(1-\lambda)y_{m}, \mu_m\big) \right|}Z_{m+1}^{h}, 
\mu_{m+1:M}\Big)\nonumber\\
&\hspace{0.5cm}\leq\EE \Phi_{m+1}\Big( \lambda x_{0:m}+(1-\lambda)y_{0:m}, \lambda x_m+(1-\lambda)y_m+\lambda h\cdot b\big(t_m,  x_m, \mu_m\big)+(1-\lambda)h\cdot b\big(t_m,  y_{m}, \mu_m\big)\nonumber\\
&\hspace{3cm}+{\color{black}\left|\lambda\sqrt{h}\cdot\sigma\big(t_m, x_m, \mu_m\big) +(1-\lambda)\sqrt{h}\cdot \sigma\big(t_m, y_{m}, \mu_m\big)\right| }Z_{m+1}^{h}, 
\mu_{m+1:M}\Big)\nonumber\\
&\hspace{1cm} \text{(by Assumption (II)-(1) and Lemma~\ref{conZtrun}-$(b)$)}\nonumber\\
&\hspace{0.5cm}\leq\EE \Phi_{m+1}\Big( \lambda x_{0:m}+(1-\lambda)y_{0:m}, \lambda x_m+(1-\lambda)y_m+\lambda h\cdot b\big(t_m,  x_m, \mu_m\big)+(1-\lambda)h\cdot b\big(t_m,  y_{m}, \mu_m\big)\nonumber\\
&\hspace{3cm}+\lambda\sqrt{h}\cdot{\color{black}\left|\sigma\big(t_m, x_m, \mu_m\big) \right|}Z_{m+1}^{h}+(1-\lambda)\sqrt{h}\cdot {\color{black}\left|\sigma\big(t_m, y_{m}, \mu_m\big)\right|} Z_{m+1}^{h}, 
\mu_{m+1:M}\Big)\nonumber\\
&\hspace{0.5cm}=\EE \Phi_{m+1}\Big( \lambda x_{0:m}+(1-\lambda)y_{0:m}, \lambda \mathcal{E}_{h}(x_m, \mu_m, Z_{m+1}^{h})+ (1 - \lambda)\mathcal{E}_{h}(y_m, \mu_m, Z_{m+1}^{h}), \mu_{m+1:M}\Big)\nonumber\\
&{\color{black}\hspace{1cm} \text{(by the symmetry of the distribution of $Z_{m+1}^{h}$)}}\nonumber\\
&\hspace{0.5cm}\leq \lambda \EE \Phi_{m+1}\Big( x_{0:m}, \mathcal{E}_{h}(x_m, \mu_m, Z_{m+1}^{h}), \mu_{m+1:M}\Big)+(1-\lambda)\EE \Phi_{m+1}\Big( y_{0:m}, \mathcal{E}_{h}(y_m, \mu_m, Z_{m+1}^{h}), \mu_{m+1:M}\Big)\nonumber\\
&\hspace{1cm} \text{(by the convexity of $\Phi_{m+1}$)}\nonumber\\
&\hspace{0.5cm}=\lambda \Phi_{m}(x_{0:m}, \mu_{m:M})+(1-\lambda) \Phi_{m}(y_{0:m}, \mu_{m:M}).\nonumber
\end{align}
Thus the function $\Phi_{m}(\,\cdot\,, \mu_{m:M})$ is convex and one concludes by a backward induction. 

\noindent {\sc Step~2} \big($\Phi_{m}$ is non-decreasing w.r.t. the partial order~(\ref{partialorder})\big). 
For any $x_{0:m}, z_{0:m}\in \RR^{m+1}$ such that $x_{0:m}\preceq z_{0:m}$. 
\begin{align}
\Phi_{m}&(x_{0:m}, \mu_{m:M})=\EE\Phi_{m+1}\big(x_{0:m}, \mathcal{E}_{h}(x_{m}, \mu_{m}, Z_{m+1}^{h}), \mu_{m+1:M}\big)\nonumber\\
&\leq \EE\Phi_{m+1}\big(z_{0:m}, \mathcal{E}_{h}(x_{m}, \mu_{m}, Z_{m+1}^{h}), \mu_{m+1:M}\big)\nonumber\\
&\hspace{0.5cm}\text{(by the monotonicity of $\Phi_{m+1}$)}\nonumber\\
&\leq \EE\Phi_{m+1}\big(z_{0:m}, \mathcal{E}_{h}(z_{m}, \mu_{m}, Z_{m+1}^{h}), \mu_{m+1:M}\big)=\Phi_{m}(z_{0:m}, \mu_{m:M}),\nonumber\\
\end{align}
where the last inequality is due to Lemma~\ref{propamonoconv} as the function $x\mapsto \Phi_{m+1}(z_{0:m}, x, \mu_{m+1:M})$ is a convex non-decreasing function. 
Thus one concludes by a backward induction. Moreover, it is obvious by a backward induction that the functions $\Phi_{m}, 1\leq m\leq M,$ have an $r$-polynomial growth by Assumption (I) and the assumption made on $F$.



\noindent$(ii)$ Firstly, it is obvious that for any $\mu_{M}, \nu_{M}\in\mathcal{P}_{1}(\RR)$ such that $\mu_{M}\mconright \nu_{M}$, we have
\[\Phi_{M}(x_{0:M}, \mu_{M})=F(x_{0:M})=\Phi_{M}(x_{0:M}, \nu_{M}).\]
Assume that $\Phi_{m+1}(x_{0:m+1}, \cdot\,)$ is non-decreasing in $\mu_{m+1:M}$ with respect to the monotone convex order. For any $\mu_{m:M}, \nu_{m:M}\in\big(\mathcal{P}_{1}(\RR)\big)^{M-m+1}$ such that $\mu_{i}\mconright\nu_{i}, i=m, \ldots, M,$ we have
\begin{align}
&\Phi_{m}(x_{0:m}, \mu_{m:M})\nonumber\\
&\hspace{0.5cm}=\EE \Big[ \Phi_{m+1}\big(x_{0:m}, x_{m}+hb(t_{m}, x_{m}, \mu_{m})+\sqrt{h}\sigma(t_{m}, x_{m}, \mu_{m})Z_{m+1}^{h}, \mu_{m+1: M}\big)\Big]\nonumber\\
&\hspace{0.5cm}=\EE \Big[ \Phi_{m+1}\big(x_{0:m}, x_{m}+hb(t_{m}, x_{m}, \mu_{m})+\sqrt{h}{\color{black}\,\left|\sigma(t_{m}, x_{m}, \mu_{m})\right|}Z_{m+1}^{h}, \mu_{m+1: M}\big)\Big]\nonumber\\
&\hspace{0.5cm}\leq\EE \Big[ \Phi_{m+1}\big(x_{0:m}, x_{m}+hb(t_{m}, x_{m}, \nu_{m})+\sqrt{h}{\color{black}\,\left|\sigma(t_{m}, x_{m}, \nu_{m})\right|}Z_{m+1}^{h}, \mu_{m+1: M}\big)\Big]\nonumber\\
&\hspace{1cm}\text{(by Assumption~(II)-(2) and Lemma~\ref{conZtrun}-$(b)$ since $\Phi{_{m+1}} (x_{0:m}, \cdot, \mu_{m+1:M})$ is convex and non-decreasing)}\nonumber\\
&\hspace{0.5cm}\leq\EE \Big[ \Phi_{m+1}\big(x_{0:m}, x_{m}+hb(t_{m}, x_{m}, \nu_{m})+\sqrt{h}\sigma(t_{m}, x_{m}, \nu_{m})Z_{m+1}^{h}, \nu_{m+1: M}\big)\Big]\nonumber\\
&\hspace{1cm}\text{(as $\Phi_{m+1}$ is non-decreasing w.r.t the monotone convex order)}\nonumber\\
&\hspace{0.5cm}=\Phi_{m}(x_{0:m}, \nu_{m:M}).\nonumber
\end{align}
Then one concludes by a backward induction. 
\end{proof}

Similarly, we define $\Psi_{m}:\RR^{m+1}\times \big(\mathcal{P}_{1}(\RR)\big)^{M-m+1}\rightarrow \RR,  \,m=0, \ldots, M$ by
\begin{flalign}
&\Psi_{M}(x_{0:M}, \mu_{M})\hspace{0.25cm}\coloneqq F(x_{0:M}),&\nonumber\\
\label{defpsi}
&\Psi_{m}(x_{0:m}, \mu_{m:M})\coloneqq 
\EE \Big[\Psi_{m+1}\big(x_{0:m}, x_{m}+h\beta(t_m, x_m, \mu_m)+\sqrt{h}\theta(t_m, x_{m}, \mu_{m})Z^{h}_{m+1}, \mu_{m+1:M}\big)\Big]. &
\end{flalign}
Recall the notation $\widetilde{\mu}_{m}\coloneqq \mathbb{P}_{\widetilde{X}_{m}}$ and $\widetilde{\nu}_{m}\coloneqq \mathbb{P}_{\widetilde{Y}_{m}}$. By applying the same recursion as in~\cite[Lemma 2.6]{liu2020functional}, we know that for every $m=0,\ldots, M$, 
\begin{align}\label{especonditionel}
&\Psi_{m}(\widetilde{X}_{0:m}, \widetilde{\mu}_{m:M})=\EE \big[F(\widetilde{X}_{0}, \ldots, \widetilde{X}_{M})\mid \mathcal{F}_{m}\big]\:\; \text{and}\:\;\Psi_{m}(\widetilde{Y}_{0:m}, \widetilde{\nu}_{m:M})=\EE \big[F(\widetilde{Y}_{0}, \ldots, \widetilde{Y}_{M})\mid \mathcal{F}_{m}\big].
\end{align}


\begin{proof}[Proof of Proposition~\ref{funmonoconvexEuler}]
\noindent $(a)$ 
We first prove by a backward induction that for every $m=0, \ldots, M$, $\Phi_{m}\leq\Psi_{m}$. 

It follows from the definition of $\Phi_{M}$ and $\Psi_{M}$ that $\Phi_{M}=\Psi_{M}$. Assume now $\Phi_{m+1}\leq \Psi_{m+1}$. For any $x_{0:m}\in\RR^{m+1}$ and $\mu_{m:M}\in\big(\mathcal{P}_{1}(\RR)\big)^{M-m+1}$, we have 
\begin{align}
&\Phi_{m}(x_{0:m}, \mu_{m:M})\nonumber\\
&\hspace{0.5cm}=\EE\big[\Phi_{m+1}\big(x_{0:m}, x_{m}+hb(t_{m}, x_{m}, \mu_{m})+\sqrt{h}\sigma(t_{m}, x_{m}, \mu_{m})Z_{m+1}^{h}, \mu_{m+1:M}\big)\big]\nonumber\\
&\hspace{0.5cm}=\EE\big[\Phi_{m+1}\big(x_{0:m}, x_{m}+hb(t_{m}, x_{m}, \mu_{m})+\sqrt{h}{\color{black}\,\left|\sigma(t_{m}, x_{m}, \mu_{m})\right|\,}Z_{m+1}^{h}, \mu_{m+1:M}\big)\big]\nonumber\\
&\hspace{0.5cm}\leq\EE\big[\Phi_{m+1}\big(x_{0:m}, x_{m}+h\beta(t_{m}, x_{m}, \mu_{m})+\sqrt{h}{\color{black}\,\left|\theta(t_{m}, x_{m}, \mu_{m})\right|}Z_{m+1}^{h}, \mu_{m+1:M}\big)\big]\nonumber\\
&\hspace{0.5cm}\quad\text{(by Assumption~(II)-(3) and Lemma~\ref{conZtrun}-$(b)$,~\ref{propphi})}\nonumber\\
&\hspace{0.5cm}=\EE\big[\Phi_{m+1}\big(x_{0:m}, x_{m}+h\beta(t_{m}, x_{m}, \mu_{m})+\sqrt{h}\,\theta(t_{m}, x_{m}, \mu_{m})Z_{m+1}^{h}, \mu_{m+1:M}\big)\big]\nonumber\\
&\hspace{0.5cm}\quad\text{\color{black}(by the symmetry of the distribution of $Z_{m+1}^{h}$)}\nonumber\\
&\hspace{0.5cm}\leq\EE\big[\Psi_{m+1}\big(x_{0:m}, x_{m}+h\beta(t_{m}, x_{m}, \mu_{m})+\sqrt{h}\theta(t_{m}, x_{m}, \mu_{m})Z_{m+1}^{h}, \mu_{m+1:M}\big)\big]\nonumber\\
&\hspace{0.5cm}=\Psi_{m}(x_{0:m}, \mu_{m:M}).\label{backindu}
\end{align}
Thus,  the backward induction is completed and 
\begin{equation}\label{phipsiorder}
\forall \, m=0, \ldots, M, \quad \Phi_{m}\leq\Psi_{m}.
\end{equation}
Consequently, 
\begin{align}
\EE\big[ F(\widetilde{X}_{0}, \ldots, \widetilde{X}_{M})\big]&=\EE \Phi_{0}(\widetilde{X}_{0}, \widetilde{\mu}_{0:M})\hspace{0.9cm}\text{(by~(\ref{especonditionel}))}\nonumber\\
&\leq\EE \Phi_{0}(\widetilde{Y}_{0}, \widetilde{\mu}_{0:M})\hspace{1cm}\text{(by Lemma~\ref{propphi}-$(i)$ since } \widetilde{X}_{0}\mconright\widetilde{Y}_{0})\nonumber\\
&\leq \EE \Phi_{0}(\widetilde{Y}_{0}, \widetilde{\nu}_{0:M})  \hspace{1cm}\,\text{(by Lemma~\ref{propphi}-$(ii)$ and Proposition~\ref{monconEuler}-$(a)$)}\nonumber\\
&\leq \EE \Psi_{0}(\widetilde{Y}_{0}, \widetilde{\nu}_{0:M}) \hspace{1cm}\,\text{(by~(\ref{phipsiorder}))}\nonumber\\
&=\EE \big[F(\widetilde{Y}_{0}, \ldots, \widetilde{Y}_{M})\big].\label{inve2}
\end{align}

\noindent \textcolor{black}{$(b)$ Under Assumption (I) and (II'), one can prove $\Phi_m\geq \Psi_m, \;m=0, \ldots, M$ by using the same backward induction as in (\ref{backindu}) by applying conditions in Assumption (II')$-(3')$ instead of Assumption (II)$-(3)$. Moreover, Proposition \ref{monconEuler}-$(b)$ implies that under the same assumptions, $\widetilde{Y}_{m}\mconright\widetilde{X}_{m},$ $m=0,\ldots, M$. Thus, one can conclude the proof by  simply reversing the three inequalities in (\ref{inve2}).}
\end{proof}


\section{Monotone convex order for the McKean-Vlasov process}

The main goal of this section is to prove Theorem~\ref{main}. The key step from Proposition~\ref{funmonoconvexEuler} to Theorem~\ref{main} is the convergence of the truncated Euler scheme, proved in the next section. 

\subsection{Convergence of the truncated Euler scheme}
This section is devoted to prove the convergence of the truncated Euler scheme~(\ref{EulertruncatedX}) and~(\ref{EulertruncatedY}) to the unique solution of the equations~(\ref{defx}) and~(\ref{defy}). We will state this convergence only for $(X_{t})_{t\in[0, T]}$ but the proof remains true for $(Y_{t})_{t\in[0, T]}$ as well. We first recall several results in~\cite{liu2020functional} for the convergence of the regular Euler scheme~(\ref{Eulerclass}). 

\begin{prop}\label{convEulerclass}
Let $(X_{t})_{t\in[0, T]}$ be the unique strong solution of~(\ref{defx}) and let $(\bar{X}_{t})_{t\in[0, T]}$ be the continuous Euler scheme defined by: $\bar{X}_{0}^{M}=X_0$ and
\begin{equation}\label{continuousEulerclass}
\forall t\in [t_m, t_m+1), \quad \bar{X}^{M}_{t}\coloneqq  \bar{X}^{M}_{t_{m}}+ b(t_{m},  \bar{X}^{M}_{t_m}, \bar{\mu}^{M}_{t_m})(t-t_m)+ \sigma(t_{m},  \bar{X}^{M}_{t_m}, \bar{\mu}^{M}_{t_m})(B_t - B_{t_m})
\end{equation}
where $\bar{\mu}_{t_m}^{M}$ denotes the probability distribution of $\bar{X}_{t_m}^{M}$. 
Under Assumption (I), 
\begin{enumerate}[$(a)$]
\item there exists a constant $C_{p, b, \sigma}$ depending on $p, b, \sigma$ such that for every $t\in[0, T]$
\begin{equation}\label{polymoment}\forall M\geq 1,  \quad \vertii{\sup_{u\in[0, t]}\left| X_u\right|}_{p}\vee \vertii{\sup_{u\in[0, t]}\left| \bar{X}^{M}_{u}\right|}_{p}\leq C_{p, b, \sigma} e^{C_{p, b, \sigma}t}\big(1+\vertii{X_{0}}_{p}\big);
\end{equation}
\item there exists a constant $\kappa$ depending on $L, b, \sigma, \vertii{X_{0}}_{p}, p, T$ such that for any $s, t\in[0, T], \,s<t$,
\[\forall\, M\geq1, \quad\quad\quad \vertii{\bar{X}_{t}^{M}-\bar{X}_{s}^{M}}_{p}\leq \kappa \sqrt{t-s}; \]
\item there exists a constant $C$ depending on $L, b,\sigma,p, \vertii{X_0}_p, T, \gamma$ such that 
\[\vertii{\sup_{t\in [0, T]}\left| X_{t}-\bar{X}_{t}^{M}\right|}_{p}\leq C h^{\frac{1}{2}\wedge\gamma}.\]
\end{enumerate}

\end{prop}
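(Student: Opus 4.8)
\textbf{Plan of proof for Proposition~\ref{convEulerclass}.}

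The three statements are the classical ingredients of the strong $L^p$-convergence of an Euler scheme for a McKean--Vlasov SDE, adapted to the regular scheme~(\ref{Eulerclass})--(\ref{continuousEulerclass}); all three are already available in~\cite{liu2020functional} and the plan is essentially to recall their proofs in the present notation. For part $(a)$ I would start from the continuous-time representation~(\ref{continuousEulerclass}), apply the Burkholder--Davis--Gundy inequality to the stochastic integral term and Jensen/H\"older to the drift term, and use the linear growth of $b$ and $\sigma$ in $(x,\mu)$ that follows from Assumption~(I) (Lipschitz in $x$ and in $\mu$, plus finiteness at a fixed point). The key point is that $\mathcal{W}_p(\bar\mu^M_{t_m},\delta_0)=\|\bar X^M_{t_m}\|_p\le \|\sup_{u\le t_m}\bar X^M_u\|_p$, so the McKean--Vlasov dependence is controlled by the same supremum one is estimating; writing $\varphi_M(t):=\|\sup_{u\in[0,t]}\bar X^M_u\|_p^p$ and similarly for $X$, one gets an integral inequality $\varphi_M(t)\le C(1+\|X_0\|_p^p)+C\int_0^t\varphi_M(s)\,ds$ (the piecewise-constant argument $t_m$ is bounded by the running sup, uniformly in $M$), and Gr\"onwall's lemma yields the stated bound with a constant independent of $M$. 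The same computation done directly on~(\ref{defx}) gives the bound for $X$.

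Part $(b)$ is a one-step increment estimate: for $s<t$, $\bar X^M_t-\bar X^M_s$ is a sum of a drift contribution, bounded in $L^p$ by $\int_s^t \|b(t_{\cdot},\bar X^M_{t_\cdot},\bar\mu^M_{t_\cdot})\|_p\,du \le C(1+\sup_M\|\sup_u\bar X^M_u\|_p)(t-s)$, and a martingale contribution, bounded via BDG by $C(1+\sup_M\|\sup_u\bar X^M_u\|_p)\sqrt{t-s}$; using part $(a)$ to bound the supremum uniformly in $M$ and absorbing $(t-s)\le\sqrt T\sqrt{t-s}$ gives $\|\bar X^M_t-\bar X^M_s\|_p\le\kappa\sqrt{t-s}$.

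Part $(c)$ is the genuine convergence statement and is the main obstacle. The standard route is to set $e_M(t):=\|\sup_{u\in[0,t]}(X_u-\bar X^M_u)\|_p$, subtract the integral forms of $X$ and $\bar X^M$, and split each coefficient difference as $b(u,X_u,\mu_u)-b(t_{\underline u},\bar X^M_{t_{\underline u}},\bar\mu^M_{t_{\underline u}})$ into three pieces: the time-discretization error $b(u,\cdot,\cdot)-b(t_{\underline u},\cdot,\cdot)$ controlled by the $\gamma$-H\"older assumption~(\ref{assumpholder}) together with the moment bound $(a)$, contributing $O(h^\gamma)$; the space-regularity error, controlled by the Lipschitz assumption~(\ref{assumplip}) and giving a term $\le L\,e_M(u)$ plus, after replacing $\bar X^M_u$ by $\bar X^M_{t_{\underline u}}$, an extra $O(\sqrt h)$ coming from part $(b)$; and the measure-regularity error $\mathcal{W}_p(\mu_u,\bar\mu^M_{t_{\underline u}})$, which by the coupling definition~(\ref{defwas}) of $\mathcal{W}_p$ is dominated by $\|X_u-\bar X^M_{t_{\underline u}}\|_p\le e_M(u)+\kappa\sqrt h$. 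Applying BDG to the martingale part and H\"older to the drift part yields $e_M(t)^p\le C\int_0^t e_M(s)^p\,ds + C\,h^{p(\frac12\wedge\gamma)}$, and Gr\"onwall's lemma closes the argument with a constant $C$ depending only on $L,b,\sigma,p,\|X_0\|_p,T,\gamma$. The only subtlety requiring care is that the McKean--Vlasov terms make this a \emph{closed} Gr\"onwall inequality in $e_M$ rather than an estimate against an independently-known process, but since $\mathcal{W}_p(\mu_u,\bar\mu^M_{t_{\underline u}})\le e_M(u)+\kappa\sqrt h$ this causes no circularity — the Wasserstein term is absorbed into the $\int e_M$ term on the right-hand side.
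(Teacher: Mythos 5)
Your outline is correct and is essentially the argument behind this proposition: the paper itself gives no proof here, simply recalling the result from~\cite{liu2020functional}, and that reference proves it exactly along your lines (linear growth from Assumption~(I) plus BDG and Gr\"onwall for the uniform $L^p$-moment bound, a one-step drift/martingale estimate for the $\sqrt{t-s}$ increment, and the three-way splitting of the coefficient error — H\"older in time, Lipschitz in space, and $\mathcal{W}_p(\mu_u,\bar\mu^M_{t_{\underline u}})\le \|X_u-\bar X^M_{t_{\underline u}}\|_p$ for the measure argument — closed by Gr\"onwall). Your remark that the Wasserstein term is absorbed into the Gr\"onwall integrand, so no circularity arises, is precisely the point that makes the McKean--Vlasov case go through as in the standard diffusion setting.
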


As~(\ref{continuousEulerclass}), we also need a ``continuous'' version of the truncated Euler scheme~(\ref{EulertruncatedX}). For this purpose, we first define the following interpolator. 
Recall that $t_{m}=m\cdot \frac{T}{M}, m=0,  \ldots, M$. 
\begin{defn}\label{interpolator}
For every integer $M\geq1$, we define the piecewise affine interpolator $i_{M}: x_{0:M}\in\RR^{M+1}\mapsto i_{M}(x_{0:M})\in\mathcal{C}([0, T], \RR)$ by 
\begin{align}
& \forall\, m=0, \ldots, M-1, \;\forall\, t\in[t_{m}, t_{m+1}],\hspace{0,5cm}i_{M}(x_{0:M})(t)=\frac{M}{T}\big[(t_{m+1}-t)x_{m}+(t-t_{m})x_{m+1}\big].&\nonumber
\end{align}
\end{defn}

\vspace{-0.2cm}
\noindent Now we define the {\em piecewise affine} truncated Euler scheme, denoted by $(\widetilde X^M_{t})_{t\in[0, T]}$, as follows:
\begin{equation}\label{affineEuler} 
\forall\, 0\leq m\leq M-1, \:\forall\, t\!\in[t_m,t_{m+1}], \quad \widetilde X^M_{t}\coloneqq i_{M}(\widetilde X_{0:M})(t)= \frac{t_{m+1}-t}{t_{m+1}-t_m}\widetilde X_{t_m}+   \frac{t-t_{m} }{t_{m+1}-t_m}\widetilde X_{t_{m+1}}.
\end{equation}


\begin{prop}[Convergence of the piecewise  affine truncated Euler scheme]\label{cvgtruncated} 
Assume the coefficient function $b$ and $\sigma$ satisfy Assumption~(I). Let $(\widetilde X^M_{t})_{t\in [0,T]}$ denote the
piecewise affine   truncated   Euler scheme~\eqref{affineEuler} with step   $h =\frac TM$. 
Then for every $r\in(0, p)$, 
\[
\Big\| \sup_{t\in [0,T]}|\widetilde X^M_{  t} - X_t\big|\Big\|_r\to 0\quad \mbox{ as } \quad h= \tfrac TM \to 0.
\]
\end{prop}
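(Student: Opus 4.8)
The plan is to compare the piecewise affine truncated Euler scheme $\widetilde X^M$ with the classical continuous Euler scheme $\bar X^M$ and invoke the already-established convergence of the latter (Proposition~\ref{convEulerclass}-$(c)$). Concretely, for $r\in(0,p)$ write
\[
\Big\| \sup_{t\in[0,T]}|\widetilde X^M_t - X_t|\Big\|_r \le \Big\| \sup_{t\in[0,T]}|\widetilde X^M_t - \bar X^M_t|\Big\|_r + \Big\| \sup_{t\in[0,T]}|\bar X^M_t - X_t|\Big\|_r .
\]
By Proposition~\ref{convEulerclass}-$(c)$ the second term is $O(h^{\frac12\wedge\gamma})\to0$ (note $r<p$, so the $L^r$ norm is controlled by the $L^p$ norm on a finite measure space). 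So the whole problem reduces to showing the first term tends to $0$. Since both $\widetilde X^M$ and $\bar X^M$ are piecewise affine between the grid points $t_m$ (the continuous classical Euler scheme is affine on each $[t_m,t_{m+1}]$ only if we use $B_t-B_{t_m}$; strictly speaking $\bar X^M$ from \eqref{continuousEulerclass} uses the true Brownian increment, so it is not affine — I would instead compare $\widetilde X^M$ to the \emph{piecewise affine interpolation} $\check X^M := i_M(\bar X_{0:M})$ of the classical Euler grid values, and separately bound $\sup_t |\check X^M_t - \bar X^M_t|$, which is a standard estimate of order $\sqrt h$ times a log factor via Proposition~\ref{convEulerclass}-$(b)$ and Gaussian maximal inequalities). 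Then, because both interpolants are affine on each subinterval, $\sup_{t\in[0,T]}|\widetilde X^M_t - \check X^M_t| = \max_{0\le m\le M}|\widetilde X^M_{t_m} - \bar X^M_{t_m}|$, reducing everything to a discrete comparison of the two Euler recursions at grid points.

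For the discrete comparison, set $\delta_m := \widetilde X^M_{t_m} - \bar X^M_{t_m}$ and subtract the recursions \eqref{EulertruncatedX} and \eqref{Eulerclass}. Using the Lipschitz assumptions (Assumption~(I)) in $x$ and in the measure argument (via $\mathcal W_p(\widetilde\mu_{t_m},\bar\mu_{t_m})\le \|\delta_m\|_p$), the drift and diffusion coefficient differences contribute terms of size $h\,\|\delta_m\|_p$ and $\sqrt h\,\|\delta_m\|_p |Z^h_{m+1}|$ respectively. The genuinely new term is the \emph{truncation error}: $\sqrt h\,\sigma(t_m,\bar X_{t_m},\bar\mu_{t_m})(Z^h_{m+1}-Z_{m+1}) = -\sqrt h\,\sigma(\cdots)Z_{m+1}\mathbbm 1_{\{|Z_{m+1}|> (2\sqrt h[\sigma]_{\mathrm{Lip}_x})^{-1}\}}$. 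I would estimate its $L^r$ (or $L^p$) norm using the linear growth of $\sigma$ (consequence of Lipschitz continuity plus the uniform moment bound \eqref{polymot}), the Cauchy–Schwarz inequality, and a tail bound for the Gaussian: $\mathbb E\big[Z^2 \mathbbm 1_{\{|Z|>a\}}\big]\le C(1+a)e^{-a^2/2}$, with $a = (2\sqrt h[\sigma]_{\mathrm{Lip}_x})^{-1}\to\infty$. This shows each step's truncation error is $o(h^k)$ for every $k$ — in fact super-polynomially small — so summed over the $\sim 1/h$ steps it is negligible. A discrete Gronwall argument on $u_m := \|\delta_m\|_p$ (with $u_0 = 0$ since both schemes start at $X_0$) then gives $\max_m u_m \le e^{CT}\cdot(\text{total truncation error})\to 0$. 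To pass from the $\ell^\infty$-in-$m$ bound on $\|\delta_m\|_p$ to the $L^p$ norm of $\max_m|\delta_m|$, I would either carry an $L^p$-maximal estimate through the Gronwall recursion (using the BDG inequality on the martingale part of $\delta$) or, more cheaply, exploit that $r<p$ and use $\|\max_m|\delta_m|\|_r^r \le \sum_m \|\delta_m\|_p^p\cdot(\text{const})$ — actually the clean route is to prove $\big\|\max_{0\le m\le M}|\delta_m|\big\|_p\le C\cdot(\text{total truncation error})$ directly via a maximal-inequality-augmented discrete Gronwall, exactly as in the proof of \eqref{polymomentdiscret}/Proposition~\ref{convEulerclass}.

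The main obstacle is the bookkeeping to get a \emph{maximal} ($\sup_m$ inside the norm, not outside) estimate while keeping the measure-dependence under control: the McKean–Vlasov coupling means $\mathcal W_p(\widetilde\mu_{t_m},\bar\mu_{t_m})$ must be bounded by $\|\widetilde X_{t_m}-\bar X_{t_m}\|_p$ using the \emph{same} Brownian motion (which holds here since both schemes are driven by the same $Z_{m+1} = h^{-1/2}(B_{t_{m+1}}-B_{t_m})$ via $Z^h_{m+1}=T^h(Z_{m+1})$), so the coupling is automatic and no extra optimal-transport argument is needed. Everything else is routine: the truncation tail is exponentially small, the Lipschitz/Gronwall machinery is standard, and the affine-interpolation error is handled by Proposition~\ref{convEulerclass}-$(b)$ together with a Gaussian maximal inequality. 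One should also double-check that the uniform moment bound \eqref{polymot} for $\widetilde X^M$ is enough to justify all the $L^r$-to-$L^p$ passages and the integrability of the truncation-error terms, which it is since $|Z^h_m|\le|Z_m|$.
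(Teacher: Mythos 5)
Your proposal is correct in outline, but it takes a genuinely different route from the paper for the key step. The paper also reduces to comparing the truncated and regular schemes, but it does so \emph{qualitatively}: it exhibits the event $\Omega_h=\bigcap_{m=1}^M\{|Z_m|\le \frac{1}{2\sqrt h\,[\sigma]_{\mathrm{Lip}_x}}\}$ on which $Z_m^h=Z_m$ so that the two discrete schemes coincide pathwise, shows $\mathbb{P}(\Omega_h)\to1$ by a Gaussian tail bound, and then concludes with H\"older's inequality, $\big\|\sup_m|\widetilde X_{t_m}-\bar X_{t_m}|\big\|_r\le \big\|\sup_s|\widetilde X_s-\bar X_s|\big\|_p\,\mathbb{P}(\Omega_h^c)^{\frac1r-\frac1p}$ -- this is precisely where the restriction $r<p$ is used -- while the interpolation error is absorbed into the modulus of continuity of $X$ and killed by dominated convergence (no rate). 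You instead run a direct strong-error analysis: a discrete Gronwall recursion on $\delta_m=\widetilde X_{t_m}-\bar X_{t_m}$ with the McKean--Vlasov coupling $\mathcal W_p(\widetilde\mu_{t_m},\bar\mu_{t_m})\le\|\delta_m\|_p$, the truncation error $\sqrt h\,\sigma(\cdots)Z_{m+1}\mathbbm 1_{\{|Z_{m+1}|>a\}}$ estimated as super-polynomially small via Gaussian tails, and a BDG/maximal-inequality-augmented Gronwall to put the $\max_m$ inside the $L^p$ norm, plus a separate (log-factor) bound for the gap between the affine interpolant $i_M(\bar X_{0:M})$ and the continuous Euler scheme. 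Both arguments are sound; the paper's is shorter and avoids redoing any Euler-error machinery but is intrinsically non-quantitative and tied to $r<p$, whereas yours is heavier (it essentially reruns the convergence proof of the regular scheme with an extra error term) but buys a rate -- the scheme discrepancy is $o(h^k)$ for all $k$, so you would recover the order $h^{\frac12\wedge\gamma}$ up to logarithmic factors and even convergence at the exponent $p$ itself for the grid-point maximum. The only points to execute carefully, which you already flag, are the maximal (sup-inside-the-norm) Gronwall step using $\mathbb{E}[Z^h_{m+1}\mid\mathcal F_m]=0$ together with BDG, and the uniform-over-$m$ control of the within-interval Brownian fluctuation, for which Proposition~\ref{convEulerclass}-$(b)$ alone does not suffice and a Gaussian maximal inequality is indeed needed.
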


\begin{proof}{\sc Step~1}. 
One checks that, if $t\!\in[t_m, t_{m+1}]$
\begin{align*}
|  X_t-\widetilde X^M_{t} |& \le |  X_t-\widetilde X^M_{t_m} | \vee  |  X_t-\widetilde X^M_{t_{m+1}} |\\
&\le ( |  X_t-X_{t_m}| + |X_{t_m}-\widetilde X^M_{t_m} | \vee(  |  X_t-X_{t_{m+1}}| + |X_{t_{m+1}}-\widetilde X^M_{t_{m+1}} |)\\
& \le \sup_{0\leq m\leq M}\big|X_{t_m}-\widetilde X^M_{t_m}| + w(X,\tfrac TM)
\end{align*}
where $w(x,\delta) = \sup_{|t-s|\le \delta}|x(t)-x(s)|$ is the uniform continuity modulus of a function $x:[0,T]\to \RR$. Hence
\[
\Big\| \sup_{t\in [0,T]} |  X_t-\widetilde X^M_{t} |\Big\|_r  \le  \Big\| \sup_{0\leq m\leq M}\big|X_{t_m}-\widetilde X^M_{t_m}| \Big\|_r + \big\|w(X,\tfrac TM)\big\|_r.
\]
As $X$ has $\PP$-$a.s.$ continuous paths, $w(X,\tfrac TM)\rightarrow0$ $\PP$-$a.s.$ as $M\rightarrow+\infty$. Moreover, it satisfies the domination property $w(X,\tfrac TM)\le 2 \sup_{t\in [0,T]}|X_t| \!\in L^p(\PP)$ by~\eqref{polymoment}, then it follows from Lebesgue's dominated convergence theorem that $ \big\|w(X,\tfrac TM)\big\|_r\to 0$ as $M\to +\infty$.

\noindent {\sc Step~2}. At this stage, it suffices  to prove that
\begin{equation}\label{needconv}
 \Big\| \sup_{0\leq m\leq M}\big|X_{t_m}-\widetilde X^M_{t_m}| \Big\|_r \to 0 \quad \mbox{ as } \quad h= \tfrac TM \to 0.
\end{equation}
Proposition~\ref{convEulerclass}-(c) shows the convergence of the regular Euler scheme \eqref{continuousEulerclass} \[\displaystyle\Big\Vert\sup_{t\in [0, T]}\left| X_{t}-\bar{X}_{t}^{M}\right|\Big\Vert_{p}\rightarrow 0\quad \text{as}\quad h= \tfrac TM \to 0.\]
Consequently, if we exhibit an event   $\Omega_h\!\in {\cal F}$ such that 
\[
\PP(\Omega_h)\to 1\: \text{ as }\: h\to 0 \quad \mbox{ and }\quad \forall\,\omega\!\in \Omega_h,
\quad (\widetilde X^M_{t_m}(\omega))_{m=0,\ldots,M}=(\bar X^M_{t_m}(\omega))_{m=0,\ldots,M}, \; \]
then \eqref{needconv} will follow by noting that, as $X_0\!\in L^{p}(\PP)$ and we have~(\ref{polymot}) and~(\ref{polymomentdiscret}),
\begin{align*}
 \Big\| \sup_{0\leq m\leq M}\big|\widetilde{X}_{t_m}-\bar X^M_{t_m}| \Big\|_r 
&\le  \Big\|  \sup_{s\in [0,T]}\big|\widetilde{X}_s-\bar X^M_{s}|\Big\|_{p}  \PP(\Omega^c_h)^{\frac 1r-\frac{1}{p}}\to 0\quad \text{as}\quad h= \tfrac TM \to 0.
\end{align*}
 
\noindent {\sc Step~3}. Let us consider, as a candidate, the event 
\[
\Omega_h = \bigcap_{m=1}^M  \Big\{|Z_m| \le\frac{1}{2\sqrt{h}\, [\sigma]_{\text{Lip}_x}} \Big\}.
\]
It is clear that on $\Omega_h$, $Z^h_m= Z_m$, $1\leq m \leq M$, by the very definition of $Z^h_m$ in \eqref{Ztruncated}, so that both (discrete time) regular and truncated Euler schemes $\bar X^M_{t_m}$  and $\widetilde X^M_{t_m}$ coincide. It remains to prove that $\PP(\Omega_h)\to 1$ as $h\to 0$. To this end, we will rely on the classical inequality  satisfied by the survival distribution function of the normal distribution
\[
\forall\, z\!\in \RR_+, \quad \PP(Z \ge z) = \int_z^{+\infty} e^{-\frac{u^2}{2}}\frac {du}{\sqrt{2\pi}}\le \frac 12\,e^{-\frac{z^2}{2}}.
\]
The $Z_m, 1\leq m\leq M$ being i.i.d. with normal distribution ${\cal N}(0,1)$ and having in mind that $h=\frac TM$,
\begin{align*}
\PP(\Omega_h)& = \left(1-\PP\Big(|Z|>  \frac{\sqrt{M}}{2\sqrt{T}\, [\sigma]_{\mathrm{Lip}_x}}\Big)\right)^M\ge  \left(1-  e^{-\frac{M}{8T [\sigma]^2_{\mathrm{Lip}_x}}}\right)^M.
\end{align*}

\noindent For $M$ large enough, we have $e^{-\frac{M}{8T [\sigma]^2_{\mathrm{Lip}_x}}}<\frac{1}{2}$. Moreover,  as  $\log (1-u)\ge -2u$ for  $u\!\in (0,1/2]$, then $\PP(\Omega_h)\ge   \exp{\big(-2M  e^{-\frac{M}{8T [\sigma]^2_{\mathrm{Lip}_x}}} \big)}$ for $M$ large enough, which converges to 1 as $M\rightarrow+\infty$. 
This completes the proof.\end{proof}

\subsection{Monotone convex order for the McKean-Vlasov process}

{\color{black}We prove Theorem~\ref{main} in this section. Remark that we will omit the proof of Theorem \ref{sym} as it is very similar to that of Theorem~\ref{main} by applying Proposition \ref{funmonoconvexEuler}-(b). }
 
\begin{proof}[Proof of Theorem~\ref{main}] $(a)$
Let $M\in\mathbb{N}^{*}.$ 
Let $(\widetilde{X}_{t_{m}}^{M})_{m=0, \ldots, M}$ and $(\widetilde{Y}_{t_{m}}^{M})_{m=0, \ldots, M}$ denote random variables defined by the truncated Euler scheme~(\ref{EulertruncatedX}) and~(\ref{EulertruncatedY}). Let $\widetilde{X}^{M}\coloneqq(\widetilde{X}^{M}_{t})_{t\in[0, T]}$, $\widetilde{Y}^{M}\coloneqq(\widetilde{Y}^{M}_{t})_{t\in[0, T]}$ denote its continuous version defined by the piecewise affine construction~(\ref{affineEuler}). It is obvious that $\sup_{t\in[0, T]}\big|\widetilde{X}^{M}_{t}\big|\leq \sup_{0\leq m\leq M}\big|\widetilde{X}_{t_{m}}^{M}\big|$ by the construction so that 
\begin{align}\label{supxy}
&\left\Vert\sup_{t\in[0, T]}\Big|\widetilde{X}^{M}_{t}\Big|\right\Vert_{p}\leq \left\|  \sup_{0\leq m\leq M}|\widetilde X^M_{t_m}|\right\|_p\leq C\big (1+\|X_0\|_p\big)<+\infty\nonumber\\
&\left\Vert\sup_{t\in[0, T]}\Big|\widetilde{Y}^{M}_{t}\Big|\right\Vert_{p}\leq \left\|  \sup_{0\leq m\leq M}|\widetilde Y^M_{t_m}|\right\|_p\leq C\big (1+\|Y_0\|_p\big)<+\infty
\end{align}
as $X_{0}, Y_{0}\in L^{p}(\mathbb{P})$. 
Hence, $F(X)$ and $F(Y)$ are in $L^{1}(\mathbb{P})$ since $F$ has a $r$-polynomial growth, $r<p$.

We define a function $F_{M}: \RR^{M+1}\rightarrow \RR$ by 
\begin{equation}
x_{0:M}\in\RR^{M+1}\mapsto F_{M}(x_{0:M})\coloneqq F\big(i_{M}(x_{0:M})\big).
\end{equation}
The function $F_{M}$ is obviously convex since $i_{M}$ is a linear application. Moreover, $F_{M}$ has also an $r$-polynomial growth on $\RR^{M+1}$ in the sense of~(\ref{rpolygrowth}) and is non-decreasing with respect to the partial order~(\ref{partialorder}) by the assumption made on $F$. 

It follows from Proposition~\ref{funmonoconvexEuler} that 
\begin{align}\label{im}
\EE &F\big(\widetilde{X}^{M}\big)=\EE F\big(i_{M}(\widetilde{X}_{0}^{M}, \ldots, \widetilde{X}_{M}^{M})\big)=\EE F_{M}\big(\widetilde{X}_{0}^{M}, \ldots, \widetilde{X}_{M}^{M}\big)\nonumber\\
&\hspace{1cm}\leq \EE F_{M}\big(\widetilde{Y}_{0}^{M}, \ldots, \widetilde{Y}_{M}^{M}\big)=\EE F\big(i_{M}(\widetilde{Y}_{0}^{M}, \ldots, \widetilde{Y}_{M}^{M})\big)=\EE F\big(\widetilde{Y}^{M}\big).
\end{align}
As Proposition~\ref{cvgtruncated} implies that $\widetilde{X}$, $\widetilde{Y}$ weakly converges to $X$, $Y$, 
 the inequality~(\ref{im}) implies that 
\[\EE F(X)\leq \EE F(Y),\]
{\color{black}by letting $M\rightarrow +\infty$ as the random variable sequences $\big(F(\widetilde{X}^{M})\big)_{M\geq 1}$, $\big(F(\widetilde{Y}^{M})\big)_{M\geq 1}$ are uniformly integrable by~(\ref{supxy}) since $F$ has an $r$-polynomial growth.}

\noindent $(b)$ The proof of Part $(b)$ follows the same idea as $(a)$.

By the same idea as in Section~\ref{funcmcvEuler}, we consider a function  
\[\tilde{G}: (x_{0:M},\eta_{0:M})\in\RR^{M+1}\times \big(\mathcal{P}_{p}(\RR)\big)^{M+1}\:\longmapsto \:\tilde{G}(x_{0:M},\eta_{0:M})\in \RR\]
satisfying the following conditions $(i)^{G}$, $(ii)^{G}$ and $(iii)^{G}$:\vspace{-0.4cm}
\begin{enumerate}[$(i)^{G}$]
\item $\tilde{G}$ is convex and non-decreasing with respect to the partial order~\ref{partialorder} in $x_{0:M}$;
\item $\tilde{G}$ is non-decreasing in $\mu_{0:M}$ with respect to the monotone convex order in the sense that 
\begin{flalign}
&\forall x_{0:M}\in\RR^{M+1} \text{ and } \forall\mu_{0:M}, \nu_{0:M}\in\big(\mathcal{P}_{p}(\RR)\big)^{M+1} \text{ s.t. }\mu_{i}\mconright\nu_{i},\; 0\leq i \leq M,&\nonumber\\
&\hspace{3.5cm}\tilde{G}(x_{0:M}, \mu_{0:M})\leq\tilde{G}(x_{0:M}, \nu_{0:M});&\nonumber
\end{flalign}
\item $\tilde{G}$ has an $r$-polynomial growth, $1\leq r \leq p$, in the sense that 
\begin{flalign}
&\exists\,  C\in \mathbb{R}_{+}\text{ s.t. } \forall (x_{0:M}, \mu_{0:M})\in\RR^{M+1}\times \big(\mathcal{P}_{p}(\RR)\big)^{M+1},&\nonumber\\
&\hspace{3cm}\tilde{G}(x_{0:M}, \mu_{0:M})\leq C \big[1+\sup_{0\leq m\leq M}\left|x_{m}\right|^{r}+\sup_{0\leq m\leq M}\mathcal{W}_{p}^{r}(\mu_{m}, \delta_{0})\big].&
\end{flalign}
\end{enumerate}

By applying the same idea as Proposition~\ref{funmonoconvexEuler} and considering the following \[\Phi_{m}^{\tilde{G}}, \Psi_{m}^{\tilde{G}}: \RR^{m+1}\times\big(\mathcal{P}_{p}(\RR)\big)^{M+1}\rightarrow \RR, \;m=0, \ldots, M,\]  defined by  
\begin{align}
&\Phi^{\tilde{G}}_{M}(x_{0:M}, \mu_{0:M})=\tilde{G}(x_{0:M}, \mu_{0:M}),\nonumber\\
& \Phi^{\tilde{G}}_{m}(x_{0:m}, \mu_{0:M})=\big(P^{h}_{m+1}\Phi^{\tilde{G}}_{m+1}(x_{0:m}, \;\cdot\;, \mu_{0:M})\big)\big(x_{m}, \mu_{m}\big),\nonumber \\
 &\hspace{2.4cm}=\EE \Big[\Phi^{\tilde{G}}_{m+1}\big(x_{0:m}, x_{m}+hb(t_m, x_m, \mu_m)+\sqrt{h}\sigma(t_m, x_{m}, \mu_{m})Z^{h}_{m+1}, \mu_{0:M}\big)\Big],\nonumber\\
&\Psi^{\tilde{G}}_{M}(x_{0:M}, \mu_{0:M})=\tilde{G}(x_{0:M}, \mu_{0:M}),\nonumber\\
 &\Psi^{\tilde{G}}_{m}(x_{0:m}, \mu_{0:M})=
\EE \Big[\Psi^{\tilde{G}}_{m+1}\big(x_{0:m}, x_{m}+h\beta(t_m, x_m, \mu_m)+\sqrt{h}\theta(t_m, x_{m}, \mu_{m})Z^{h}_{m+1}, \mu_{0:M}\big)\Big], \nonumber
\end{align}
in the place of $\Phi_{m}$ and $\Psi_{m}$ in~(\ref{defPhi1}),~(\ref{defPhi2}) and~(\ref{defpsi}), one can prove that 
\begin{equation}\label{convgeuler}
\EE \tilde{G}(\widetilde{X}_{t_0}, \ldots, \widetilde{X}_{t_M}, \widetilde{\mu}_{t_0}, \ldots, \widetilde{\mu}_{t_M})\leq \EE \tilde{G}(\widetilde{Y}_{t_0}, \ldots, \bar{Y}_{t_M}, \widetilde{\nu}_{t_0}, \ldots, \widetilde{\nu}_{t_M}).
\end{equation}

Moreover, for every $t\in[0, T]$, let $(\widetilde{\mu}_{t})_{t\in[0,T]}$ denote the marginal probability distribution of the process $(\widetilde{X}_{t})_{t\in[0, T]}$ defined in \eqref{affineEuler}. Then Proposition~\ref{cvgtruncated} implies that 
\begin{equation}\label{contimu}
d_{\,r}\big((\widetilde{\mu}_{t})_{t\in[0, T]}, (\mu_{t})_{t\in[0, T]}\big)=\sup_{t\in[0, T]}\mathcal{W}_{r}(\widetilde{\mu}_{t}, \mu_{t})\leq \vertii{\sup_{t\in[0, T]}\big|\widetilde{X}_{t}^{M}-X_{t}\big|}_{r}\rightarrow 0 \quad \text{as}\quad h=\frac{T}{M}\rightarrow0.
\end{equation}

Remark that if we generalize the interpoler $i_{M}$ in Definition~\ref{interpolator} to the marginal distribution space $\Big(\mathcal{C}\big([0, T], \mathcal{P}_{r}(\RR)\big), \;d_{r}\Big)$ defined in (\ref{margspace}) and (\ref{disdc}) as follows 
\begin{align}
 &\forall\, m=0, \ldots, M-1, \;\forall\, t\in[t_{m}^{M}, t_{m+1}^{M}], \nonumber\\
&\forall \, \mu_{0:M}\in\big(\mathcal{P}_{p}(\RR)\big)^{M+1},\hspace{1.25cm}i_{M}(\mu_{0:M})(t)=\frac{M}{T}\big[(t_{m+1}^{M}-t)\mu_{m}+(t-t^{M}_{m})\mu_{m+1}\big],\nonumber
\end{align}
where for every Borel set $A$ and $\lambda \in [0, 1]$, $(\lambda \mu_{t_{m}}+(1-\lambda)\mu_{t_{m+1}})(A)\coloneqq\lambda \mu_{t_{m}}(A)+(1-\lambda)\mu_{t_{m+1}}(A),$ then \[(\widetilde{\mu}_{t})_{t\in [0, T]}=i_{M}(\widetilde{\mu}_{t_{0}},\ldots, \widetilde{\mu}_{t_{M}}).\]
Thus, for every $(x_{0:M}, \eta_{0:M})\in \RR^{M+1}\times \big(\mathcal{P}(\RR)\big)^{M+1}$, we define $G_{M}(x_{0:M}, \eta_{0:M})\coloneqq G\big(i_{M}(x_{0:M}), i_{M}(\eta_{0:M})\big)$. Then by the hypotheses made on $G$, $G_{M}$ satisfies the previous conditions $(i)^{G}$, $(ii)^{G}$ and $(iii)^{G}$. Consequantly, for every $M\in \mathbb{N}^{*}$,  we have
\begin{align}
\EE G&\big(\widetilde{X}^{M}, (\widetilde{\mu}^{M}_{t})_{t\in[0, T]}\big)=\EE G\big(i_{M}(\widetilde{X}_{t_{0}}^{M}, \ldots, \widetilde{X}_{t_{M}}^{M}), i_{M}(\widetilde{\mu}_{t_{0}}^{M}, \ldots, \widetilde{\mu}_{t_{M}}^{M})\big)\nonumber\\
&=\EE G_{M}\big(\widetilde{X}_{t_{0}}^{M}, \ldots, \widetilde{X}_{t_{M}}^{M}, \widetilde{\mu}_{t_0}^{M}, \ldots, \widetilde{\mu}_{t_{M}}^{M}\big)\leq \EE G_{M}\big(\widetilde{Y}_{t_{0}}^{M}, \ldots, \widetilde{Y}_{t_{M}}^{M}, \widetilde{\nu}_{t_{0}}^{M}, \ldots, \widetilde{\nu}_{t_{M}}^{M}\big)\quad\text{(by \eqref{convgeuler})}\nonumber\\
&=\EE G\big(i_{M}(\widetilde{Y}_{t_{0}}^{M}, \ldots, \widetilde{Y}_{t_{M}}^{M}), i_{M}(\widetilde{\nu}_{t_{0}}^{M}, \ldots, \widetilde{\nu}_{t_{M}}^{M})\big)=\EE G\big(\widetilde{Y}^{M}, (\widetilde{\nu}_{t}^{M})_{t\in[0, T]}\big).
\end{align}

Then one can obtain~(\ref{result2}) {\color{black}by letting $M\rightarrow +\infty$ as  the random variables $\Big(G\big(\widetilde{X}^{M}, (\widetilde{\mu}^{M}_{t})_{t\in[0, T]}\big)\Big)_{M\geq 1}$ and $\Big(G\big(\widetilde{Y}^{M}, (\widetilde{\nu}^{M}_{t})_{t\in[0, T]}\big)\Big)_{M\geq 1}$ are respectively uniformly integrable and  $G$ is continuous with respect to the distance $d_{\,r}$} owing to the hypotheses made on $G$ (see Theorem~\ref{main}-$(b)$-$(ii)$ and $(iii)$) and~(\ref{supxy}). 
\end{proof}

\section*{\large Appendix A. Proof of Lemma~\ref{onlylineargrowth}}




\begin{proof}[Proof of Lemma~\ref{onlylineargrowth}]
By the definition of the monotone convex order~(\ref{defmcv2}), we only need to prove that if for every convex, non-decreasing function $g$ with linear growth, $\int_{\RR} g d\mu \le \int_{\RR} g d\nu$, then $\mu\mconright \nu$. 


Let $f : \RR\rightarrow \RR$ be a convex, non-decreasing function. 
If its right derivative $f_{r}'=0$, it is trivial that $\int_{\RR} f d\mu \le \int_{\RR} f d\nu$ as $f$ has linear growth. Consider now there exists at least one point $x_0$ such that $f_{r}'(x_0)>0$. 
Set for every  $K>0$
\[
f_{_{\!K}}(x) = f(x)\mbox{\bf 1}_{\{x\le K\}} + \big(f(K)+ f'_r(K)(x-K)\big)\mbox{\bf 1}_{\{x\ge K\}},
\]
where we replace $f$ by its right tangent on $[K, +\infty)$.
The function $f_{_{\!K}}$ is convex, non-decreasing since $f'_r(K) \ge 0$ by monotony of $f$. Moreover $f_{_{\!K}}$ is Lipschitz continuous  since $0\le (f_{_{\!K}})'_r(x)\le f'_r(K)$, hence $f_{_{\!K}}$ has linear growth. Hence,  for every  $K>0$
\begin{equation}\label{eq:mcvxK}
 \int f_{_{\!K}} d \mu \leq \int f_{_{\!K}} d\nu
 \end{equation}
and naturally 
 \begin{equation}\label{eq:mcvxK}
 \int f_{_{\!K}}^{+} d \mu +  \int f_{_{\!K}}^{-} d \mu \leq \int f_{_{\!K}}^{+} d\nu + \int f_{_{\!K}}^{-} d\nu.
 \end{equation}
As  $f_{r}'(x_0)>0$ for some $x_0$, then $f(x) \to +\infty$ as $x \to + \infty$ hence $f(x) \ge 0$ for  $x$ larger than some $K_0\ge 0$. Consequently, the family $(f_{K}^{+})_{K\geq K_0}$ is non-decreasing in $K$ and  $(f_{_{\!K}})^- = f^-$ for any $K\ge K_0$. Hence 
\[\lim_{K\rightarrow +\infty} \int f_{_{\!K}}^{+} d \mu=  \int f^{+} d \mu\quad \text{and}\quad \lim_{K\rightarrow +\infty} \int f_{_{\!K}}^{+} d \nu=  \int f^{+} d \nu\]
by applying Beppo-Levi's  monotone  convergence theorem. Then we have $\int_{\RR} f d\mu \le \int_{\RR} f d\nu$ for any convex and non-decreasing function $f$, which implies that $\mu\mconright\nu$. 
\end{proof}

\section*{\large \color{black}Appendix B. Proof of Proposition~\ref{ifnondecresing} and counterexample}

\begin{proof}[Proof of Proposition~\ref{ifnondecresing}]
As $\mu\in\mathcal{P}_{1}(\RR)$ is fixed, to alleviate notations, we denote $b_m(x)= b(t_m,x,\mu)$ and $\sigma_m(x)= \sigma(t_m,x,\mu)$, $0\leq m\leq M$. By Assumption (I), $b_m(x)$ and $\sigma_m(x)$ are Lipschitz continuous functions with respective Lipschitz constants $[b_m]_{\Lip}$ and $[\sigma_m]_{\Lip}$ satisfying $[b_m]_{\Lip}\leq L$ and $[\sigma_m]_{\Lip}\leq L$. For every $0\leq m\leq M-1$, let $P_{m+1}^{h}$ denote the transition operator of the regular Euler scheme defined by
\[(P_{m+1}^{h}f)(x)\coloneqq \EE f\big(\mathcal{E}_{m}^{h}(x, \mu, Z_{m+1})\big)=\EE f\big(x+h\cdot b_{m}(x)+\sqrt{h}\cdot \sigma_{m}(x)Z_{m+1}\big).\]

The convexity of $x\mapsto (P_{m+1}^{h}f)(x)$ is obvious by Assumption (II) and the convexity and monotony of $f$. Now we prove that the function $x\mapsto (P_{m+1}^{h}f)(x)$ is non-decreasing. First, remark that for every $x\in \RR$,
\[\EE f\big(x+h\cdot b_{m}(x)+\sqrt{h}\cdot \sigma_{m}(x)Z_{m+1}\big)=\EE f\big(x+h\cdot b_{m}(x)+\sqrt{h}\cdot \left|\sigma_{m}(x)\right|Z_{m+1}\big)\]
as $Z_{m+1}$ has a symmetric distribution $\mathcal{N}(0, 1)$, so in what follows we assume that $\sigma\geq0$ (otherwise, we only need to consider $\left|\sigma\right|$ instead of $\sigma$). 

\smallskip

\smallskip
\noindent {\sc Step~1} ($f$ {\em smooth}). Assume $f$ is also $C^1$ and   both $f$ and $f'$  have  sub-exponential growth.  Then $f'\ge 0$ as $f$ is non-decreasing. Let $x$, $y\!\in \RR$, $x> y$. A first order Taylor expansion yields
\begin{align}
&\mathbb{E}\, f\big(\EMH(x, \mu, Z_{m+1})\big)-\mathbb{E}\, f\big(\EMH(y, \mu, Z_{m+1})\big)\nonumber\\
&=\EE\Big[\int_0^1f'\big(u\, {\cal E}_m^{h}(x,\mu,Z_{m+1})+(1-u){\cal E}_m^{h}(y,\mu,Z_{m+1})\big)du\nonumber\\
&\hspace{2cm} \cdot\big(x-y +h\big(b_m(x)-b_m(y)\big) +\sqrt{h}\big( \sigma_m(x)-\sigma_m(y)\big)Z_{m+1} \big)  \Big].\nonumber
\end{align}

The coefficient function $\sigma$ is assumed to be non-decreasing in $x$ so that $\sigma_m(x)-\sigma_m(y)\geq 0$. As $f$ is convex, $f'$ is non-decreasing so that the function $\displaystyle z\mapsto \int_0^1f'\big(u\, {\cal E}_m^{h}(x,\mu,z)+(1-u){\cal E}_m^{h}(y,\mu,z)\big)du$ and the function $\displaystyle z\mapsto (x-y +h\big(b_m(x)-b_m(y)\big) +\sqrt{h}\big( \sigma_m(x)-\sigma_m(y)\big)z$ are both non-decreasing. Hence, 
\begin{align}\label{comonotony}
&\mathbb{E}\, f\big(\EMH(x, \mu, Z_{m+1})\big)-\mathbb{E}\, f\big(\EMH(y, \mu, Z_{m+1})\big)\nonumber\\
&=\EE\Big[\int_0^1f'\big(u\, {\cal E}_m^{h}(x,\mu,Z_{m+1})+(1-u){\cal E}_m^{h}(y,\mu,Z_{m+1})\big)du\nonumber\\
&\hspace{2cm} \cdot\big(x-y +h\big(b_m(x)-b_m(y)\big) +\sqrt{h}\big( \sigma_m(x)-\sigma_m(y)\big)Z_{m+1} \big)  \Big]\nonumber\\
&\geq\EE\Big[\int_0^1f'\big(u\, {\cal E}_m^{h}(x,\mu,Z_{m+1})+(1-u){\cal E}_m^{h}(y,\mu,Z_{m+1})\big)du\Big]\nonumber\\
&\hspace{2cm} \cdot\EE\Big[x-y +h\big(b_m(x)-b_m(y)\big) +\sqrt{h}\big( \sigma_m(x)-\sigma_m(y)\big)Z_{m+1}   \Big]\\
&=\EE\Big[\int_0^1f'\big(u\, {\cal E}_m^{h}(x,\mu,Z_{m+1})+(1-u){\cal E}_m^{h}(y,\mu,Z_{m+1})\big)du\Big]\cdot\Big(x-y +h\big(b_m(x)-b_m(y)\big) \Big)\geq 0.\nonumber
\end{align}


\noindent {\sc Step~2} ({\em Regularization}). If $f$ is simply non-decreasing, one can consider the function $f_{\varepsilon}(x)\coloneqq \EE f(x+\sqrt{\varepsilon}\zeta)$ with $\varepsilon>0$ and $\zeta\sim \mathcal{N}(0,1)$ independent to $(Z_{1},\ldots , Z_{M})$. Then by the same argument as in the proof of Lemma~\ref{propamonoconv}-$(a)$, the function $f_{\varepsilon}$ is $C^{1}$, both $f_{\varepsilon}$ and $f_{\varepsilon}'$  have sub-exponential growth and $\EE f_{\varepsilon}\big(\mathcal{E}_{m}^{h}(x,\mu,Z_{m+1})\big)$ converges to $\EE f\big(\mathcal{E}_{m}^{h}(x,\mu,Z_{m+1})\big)$ as $\varepsilon\rightarrow0$. Hence, $f_{\varepsilon}$ is non-decreasing owing to Step 1 and one concludes by letting $\varepsilon\rightarrow0$. \end{proof}





The classical co-monotony argument in \eqref{comonotony} suggests the following counter-example for a decreasing $\sigma$. 
Set $f(x)= e^x, \; b(t,x)= 0$ and 
$ \sigma(t,x, \mu)=\sigma(x) :=\EE\, (\zeta-x)^+$ with $\zeta \sim {\cal N}(0,1).$ 
The function $\sigma$ is convex, decreasing and $\sigma'(x) = -\PP(\zeta>x) <0$. Elementary computations yield 
\[
(P_m^{h} f)'(x)= e^{x+h\frac{\sigma^2(x)}{2}}\Big( 1+\tfrac h2 \sigma\sigma'(x)\Big).
\] 
One checks that $\sigma(x)\sim -x$ and $ \sigma'(x)=-1$   as $x\to -\infty$  so that $\sigma\sigma'(x) \sim x$ as $x\to -\infty$. Consequently, $h$ being fixed, the above derivative becomes negative as $x\to-\infty$.



\bibliographystyle{alpha}
\bibliography{monotone_convex_order}

\end{document}